\newcommand{\N}{\mathbb{N}}
\newcommand{\R}{\mathbb{R}}
\newcommand{\C}{\mathbb{C}}
\newcommand{\A}{\mathcal{A}}
\newcommand{\B}{\mathcal{B}}
\newcommand{\X}{\mathcal{X}}
\newcommand{\Y}{\mathcal{Y}}
\renewcommand{\L}{\mathcal{L}}
\newcommand{\supp}{\textnormal{supp }}
\newcommand{\sign}{\text{sign}}
\newcommand{\ess}{\mathnormal{ess}}
\newtheorem{theorem}{Theorem}[section]
\newtheorem{lemma}[theorem]{Lemma}
\theoremstyle{definition}
\newtheorem{definition}[theorem]{Definition}
\newtheorem{remark}{Remark}
\newtheorem{assumption}{Assumption}
\title[A two-phase population model] 
      {Spectral theory and time asymptotics of size-structured two-phase population models}
\author[Mustapha Mokhtar-Kharroubi and Quentin Richard]{}
\subjclass{35B40, 47D06, 92D25.}
 \keywords{Structured populations, weak compactness in $L^1$, irreducibility, essential type, spectral gap, asynchronous exponential growth.}
 \email{mustapha.mokhtar-kharroubi@univ-fcomte.fr}
 \email{quentin.richard@math.cnrs.fr}
\begin{document}
\maketitle

\centerline{\scshape Mustapha Mokhtar-Kharroubi}
\medskip
{\footnotesize
 \centerline{UMR 6623 Laboratoire de Mathématiques de Besan\c con}
   \centerline{Universit\'e Bourgogne Franche-Comt\'e}
   \centerline{Besan\c con, 25000, FRANCE}
} 

\medskip

\centerline{\scshape Quentin Richard}
\medskip
{\footnotesize
 \centerline{UMR 5251 Institut de Math\'ematiques de Bordeaux}
   \centerline{Universit\'e de Bordeaux}
   \centerline{Talence, 33400, FRANCE}
}

\bigskip

 \centerline{(Communicated by the associate editor name)}

\begin{abstract}
This work provides a general spectral analysis of
size-structured two-phase population models. Systematic functional analytic results are given. We deal first with the case of finite maximal size. We characterize the irreducibility of the corresponding $L^{1}$ semigroup in terms of properties of the different parameters of the system. We characterize also the spectral gap property of the semigroup. It turns out
that the irreducibility of the semigroup implies the existence of the spectral gap. In particular, we provide a general criterion for asynchronous
exponential growth. We show also how to deal with time asymptotics in case of lack of irreducibility. Finally, we extend the theory to the case of
infinite maximal size.
\end{abstract}

\section{Introduction}

Time asymptotics of structured biological populations are widely discussed in the literature on population dynamics (see e.g. \cite{Cushing98, Inaba2017, Magal2008}). When describing the evolution of cell populations, one can consider that individuals may be proliferating or quiescent, i.e. in two different stages in their life called `active' and `resting'. Taking into account maturity as a structure variable, Rotenberg \cite{Rotenberg83} introduced in this context the first structured population model (see also the paper of Dyson,  Villella-Bressan and Webb \cite{DysonWebb99}). Since the size plays an important role in the dynamics of cells, Gyllenberg and Webb introduced \cite{GyllenbergWebb87} the first size and age-structured population model with a quiescence state. They prove under general hypotheses the asychronous exponential growth behavior of the population. We note that size-structured population model appeared in a work by Sinko and Streifer \cite{Sinko67} (see e.g. \cite{Webb2008} for more size-structured models). Among the age-structured models in this context, we can look at the works of Arino, S\'anchez and Webb \cite{ArinoWebb97} as well as Dyson, Villella-Bressan and Webb \cite{DysonWebb2002}. The same asymptotic behavior is proved for these models under general assumptions. Thereafter, Farkas and Hinow \cite{Farkas2010} introduced  a size-structured model. In a specific case for the reproduction function (more precisely in the case of equal mitosis, where the offspring is composed of two new daughter cells with size being half of the mother cell size), we can mention the works of Gyllenberg and Webb \cite{GyllenbergWebb90, GyllenbergWebb91}, Rossa \cite{Rossa97} as well as Bai and Cui \cite{BaiCui2010}. As far as the literature is concerned, the size-structured model of equal division without quiescence was investigated through spectral analysis of semigroups by Diekmann, Heijmans and Thieme \cite{DiekmannThieme84} and Greiner and Nagel \cite{GreinerNagel85} in the case of a finite maximal size $(m<\infty)$. More recently, the case of an infinite maximal size $(m=\infty)$ was treated by Mischler and Scher \cite{Mischler2016} and Bernard and Gabriel \cite{BernardGabriel2019}.

The goal of the present work is to provide a systematic spectral analysis of
the coupled linear structured population model considered by Farkas and
Hinow \cite{Farkas2010} 
\begin{equation}
\left\{ 
\begin{array}{rcl}
\partial _{t}u_{1}(t,s)+\partial _{s}(\gamma _{1}(s)u_{1}(t,s)) & = & -\mu
(s)u_{1}(t,s)+\int_{0}^{m}\beta (s,y)u_{1}(t,y)dy \\ 
&  & -c_{1}(s)u_{1}(t,s)+c_{2}(s)u_{2}(t,s), \\ 
\partial _{t}u_{2}(t,s)+\partial _{s}(\gamma _{2}(s)u_{2}(t,s)) & = & 
c_{1}(s)u_{1}(t,s)-c_{2}(s)u_{2}(t,s),
\end{array}
\right.  \label{Eq:Model}
\end{equation}
with Dirichlet boundary conditions 
\begin{equation}
u_{1}(t,0)=0,\qquad u_{2}(t,0)=0,\qquad \forall t\geq 0.
\label{Eq:BoundCond}
\end{equation}
The density of individuals in the active (resp. resting) stage of size $s\in
\lbrack 0,m]$ at time $t$ is denoted by $u_{1}(s,t)$ (resp. $u_{2}(s,t)$)
and 
\begin{equation*}
m<\infty
\end{equation*}
is the maximal size that can be reached. For each stage, the individuals
will grow respectively with the rate $\gamma _{1}$ and $\gamma _{2}$.
Furthermore, only proliferating individuals have a mortality rate denoted by 
$\mu $ and also can reproduce via the non-local integral recruitment term in %
\eqref{Eq:Model}. More precisely, $\beta (s,y)$ gives the rate at which an
individual of size $y$ produces offspring of size $s$. Finally, the
transition between the two life-stages is described by the size-dependent
functions $c_{1}$ and $c_{2}$.

In this paper, we deal also with the case of infinite maximal sizes
\begin{equation*}
m=\infty .
\end{equation*}
The natural functional space for such a system is
\begin{equation*}
\mathcal{X}:=L^{1}(0,m)\times L^{1}(0,m).
\end{equation*}
Our approach of \textit{asynchronous exponential growth} (see Definition \ref{Def:AEG}) of such a system is in the spirit of our previous work \cite{MokhtarRichard18}. The analysis relies on two mathematical ingredients:

(i) Check that the positive $C_{0}$-semigroup $\{T_{\A}(t)\}_{t\geq 0}$ which
governs this system has a \textit{spectral gap}, i.e.  
\begin{equation*}
\omega _{ess}<\omega_0 
\end{equation*}%
where $\omega_0 $ and $\omega _{ess}$ are respectively the type and the
essential type of $\{T_{\A}(t)\}_{t\geq 0}$ (see Definition \ref{Def:Ess}). (Note that $\omega_0 $ coincides with the spectral bound
$$s(\mathcal{A}):=\sup\{\Re(\lambda), \lambda\in \sigma(\A)\}$$
of its generator $\mathcal{A}$). 

(ii) Check that the $C_{0}$-semigroup $\{T_{\A}(t)\}_{t\geq 0}$ is \textit{irreducible} (see Definition \ref{Def:Irr}). To avoid any ambiguity, we will denote by $\omega_0(A)$ and $\omega_{\ess}(A)$ the type and essential type of $\{T_{\A}(t)\}_{t\geq 0}$ where $A$ is its generator.

Indeed, it is well known (see Theorem \ref{Thm:StandardAsync} below) that in a Banach lattice, a positive irreducible semigroup having a spectral gap, has an asynchronous exponential growth. Thus, the main goal of the paper is to check that the $C_0$-semigroup $\{T_{\A}(t)\}_{t\geq 0}$ is irreducible and has a spectral gap. Both issues are non trivial. Indeed, the irreducibility of $\{T_{\A}(t)\}_{t\geq 0}$ need not be satisfied in general because of the absence of an integral recruitment term in the second equation of \eqref{Eq:Model}. The spectral gap property is also a non trivial issue related to stability of essential type of perturbed semigroups.

Our general strategy consists in considering the semigroup governing \eqref{Eq:Model} as a perturbation of the one governing 
\begin{equation*}
\left\{ 
\begin{array}{rcl}
\partial _{t}u_{1}(t,s)+\partial _{s}(\gamma _{1}(s)u_{1}(t,s)) & = & -\mu
(s)u_{1}(t,s)-c_{1}(s)u_{1}(t,s)+c_{2}(s)u_{2}(t,s), \\ 
\partial _{t}u_{2}(t,s)+\partial _{s}(\gamma _{2}(s)u_{2}(t,s)) & = & 
c_{1}(s)u_{1}(t,s)-c_{2}(s)u_{2}(t,s),
\end{array}
\right.  
\end{equation*}
where the perturbation is given by
$$\X\ni \begin{pmatrix}
u_1 \\
u_2
\end{pmatrix}\longmapsto \begin{pmatrix}
\int_0^m \beta(s,y)u_1(y)dy \\
0
\end{pmatrix}\in \X.$$
Our assumptions are weaker than those given by Farkas and Hinow \cite{Farkas2010} and our construction is more systematic. We provide several new contributions. The most important ones are the following:

1. In all the paper, we will suppose that the operator
$$K:L^1(0,m)\ni u\mapsto \int_0^m \beta(\cdot, y)u(y)dy\in L^1(0,m)$$
is bounded; see Remark \ref{Rem:Bounded}. Moreover, to obtain the existence of a spectral gap we will suppose that $K$ is weakly compact, i.e. $K$ maps a bounded subset of $L^1(0,m)$ into a weakly compact one; see Remark \ref{Rem:Weak-Compact} for details on weak compactness. (This latter property covers e.g. the case where $\beta$ is continuous on $[0,m]\times [0,m]$, as assumed in \cite{Farkas2010}). However, one may note that this assumption precludes the case of equal mitosis, since it  corresponds to a Dirac mass ``kernel''
$$\beta(s,y)=\delta_{\{s=y/2\}}$$
whose corresponding operator is certainly not weakly compact.

2. We show that the three conditions 
\begin{equation}
\forall \varepsilon \in (0,m),\quad \int_{0}^{\varepsilon }\int_{\varepsilon
}^{m}\beta (s,y)dyds>0,  \label{Condition 1}
\end{equation}
\begin{equation}
\inf \supp c_{1}=0,  \label{Condition 2}
\end{equation}
\begin{equation}
\sup \supp c_{2}=m  \label{Condition 3}
\end{equation}
\textit{characterize} the irreducibility of $\{T_{\A}(t)\}_{t\geq 0}$, (see
Theorem \ref{Thm:Irr}) where $\inf \supp c_{1}$ is the infimum of the support of $
c_{1}$ and $\sup \supp c_{2}$ is the supremum of the support of $c_{2}$. (In particular, our result covers the sufficient condition given in \cite{Farkas2010}).

3. We show that the spectrum $\sigma (\mathcal{A})$ of the generator $%
\mathcal{A}$ of $\{T_{\A}(t)\}_{t\geq 0}$ is not empty, or equivalently 
\begin{equation*}
s(\mathcal{A})>-\infty 
\end{equation*}
($s(\mathcal{A})$ is the spectral bound of $\mathcal{A}$), \textit{if and
only if} 
\begin{equation}
\exists \ \varepsilon \in (0,m),\quad \int_{0}^{\varepsilon }\int_{\varepsilon
}^{m}\beta (s,y)dyds>0  \label{Condition 4}
\end{equation}
(see Theorem \ref{Thm:SpectBound}) and moreover, this \textit{characterizes} the property that $\{T_{\A}(t)\}_{t\geq
0}$ has a spectral gap (see Theorem \ref{Thm:Asynch}). (In particular, we extend the results given in \cite{Farkas2010}). Note that here
the irreducibility of $\{T_{\A}(t)\}_{t\geq 0}$ \textit{implies} the presence of
a spectral gap. It follows that under the conditions \eqref{Condition 1}-\eqref{Condition 2}-\eqref{Condition 3}) $\{T_{\A}(t)\}_{t\geq 0}$ has an asynchronous
exponential growth (see Theorem \ref{Thm:Asynch}). 

4. We show that once $\{T_{\A}(t)\}_{t\geq 0}$ has a spectral gap (i.e. once \eqref{Condition 4} is satisfied) the peripheral spectrum of $\mathcal{A}$
reduces to $s(\mathcal{A})$, i.e. 
\begin{equation*}
\sigma (\mathcal{A})\cap \{\lambda \in \mathbb{C}:\Re (\lambda )=s(\mathcal{A%
})\}=\{s(\mathcal{A})\},
\end{equation*}
and there exists a nonzero finite rank projection $P_{0}$ on $\mathcal{X}$
such that 
\begin{equation*}
\lim_{t\rightarrow \infty }\Vert e^{-s(\mathcal{A})t}T_{\A}(t)-e^{tD}P_{0}\Vert _{\L (\mathcal{X})}=0
\end{equation*}
where $D:=(s(\mathcal{A})-\mathcal{A})P_{0}$, (see Theorem \ref{Thm:No_Irr}). A priori, if $\{T_{\A}(t)\}_{t\geq 0}$ is \textit{not} irreducible then $P_{0}$ need not be
one-dimensional and the nilpotent operator $D$ need not be zero.

5. When $\{T_{\A}(t)\}_{t\geq 0}$ is \textit{not} irreducible but has a
spectral gap, it may happen that there exists a \textit{subspace} of $\mathcal{X}$ which is invariant under $\{T_{\A}(t)\}_{t\geq 0}$ and on which $\{T_{\A}(t)\}_{t\geq 0}$ exhibits an asynchronous exponential growth (see
Theorem \ref{Thm:Small_AEG}). 

The last two statements (Theorem \ref{Thm:No_Irr} and Theorem \ref{Thm:Small_AEG}) appear here for the first time. We deal also with the case
$$m=\infty$$
which has never been dealt with before. Its analysis is quite different from the
previous one:

6. The criterion of irreducibility is similar to the case $m<\infty$ (see
Theorem \ref{Thm:Irr_inf}).

7. As for $m<\infty $, we show that $\{T_{\A}(t)\}_{t\geq 0}$ has a spectral gap (i.e. $\omega_{\ess}(\A)<\omega_0(\A)$) if and only if
\begin{equation}\label{Condition 5}
s(\mathcal{B})<s(\mathcal{A})
\end{equation}
(see Theorem \ref{Thm:AEG_inf} and Remark \ref{Rem:Spec_gap-necessary}) where 
\begin{equation*}
\mathcal{B}=\mathcal{A}-B_{3}
\end{equation*}
and
\begin{equation*}
B_{3}
\begin{pmatrix}
u_{1} \\ 
u_{2}
\end{pmatrix}
=
\begin{pmatrix}
\int_{0}^{\infty }\beta (\cdot ,y)u_{1}(y)dy \\ 
0
\end{pmatrix}.
\end{equation*}
However, the condition \eqref{Condition 5} is much more delicate to check than in the finite case $(m<\infty)$.  Indeed, in the latter case, the fact that $s(\B)=-\infty$ (see Theorem \ref{Thm:SpectBound_inf}), implies $\omega_{\ess}(\{T_{\A}(t)\}_{t\geq 0})=-\infty$ and then \eqref{Condition 5} follows from an irreducibility argument (see Theorem \ref{Thm:SpectBound}) because the generator has a compact resolvent. The argument fails when $m=\infty$ and condition \eqref{Condition 5} need not hold in general even if $\{T_{\A}(t)\}_{t\geq 0}$ is irreducible. To check the condition \eqref{Condition 5} when $m=\infty$, a separate study of the spectral bounds $s(\B)$ and $s(\A)$ is necessary. Indeed: 

8. We show first that the \textit{real} spectrum of $\mathcal{B}$ is \textit{connected}
\begin{equation*}
\sigma (\mathcal{B})\cap \R =\left( -\infty, s(\mathcal{B})\right] 
\end{equation*}
and
\begin{equation*}
-\lim \sup_{x\rightarrow \infty }\mu (x)\leq s(\mathcal{B})\leq 0
\end{equation*}
(see Theorem \ref{Thm:Spec_Dtilde}). We can compute explicitly $s(\mathcal{B})$ if $c_{2}(\cdot)$ is a constant function and $\lim_{x\rightarrow \infty} \mu (x)$, $
\lim_{x\rightarrow \infty }c_{1}(x)$ exist (see Theorem \ref{Thm:SpecBound_Pol}).

9. We show that if 
\begin{equation*}
\int_{0}^{\infty }\beta (s,y)ds\geq \mu (y),\quad \forall y\geq 0
\end{equation*}
and 
\begin{equation*}
\liminf_{x\rightarrow \infty }\mu (x)>0,\qquad \liminf_{x\rightarrow \infty
}c_{2}(x)>0
\end{equation*}
then $s(\mathcal{A})\geq 0$ and $s(\mathcal{B})<0$ (see Theorem \ref{Thm:Conserv_Gap}). In particular $\{T_{\A}(t)\}_{t\geq 0}$ has a spectral gap, i.e. $\omega_{\ess}(\A)<\omega_0(\A)$.

10. We show also a ``converse" statement: if 
\begin{equation*}
\int_{0}^{\infty }\beta (s,y)ds\leq \mu (y),\quad \forall y\geq 0
\end{equation*}
and
\begin{equation*}
\lim_{x\rightarrow \infty }c_{2}(x)=0 \text{ or } \lim_{x\rightarrow \infty
}\mu (x)=0
\end{equation*}
then $s(\mathcal{B})=s(\mathcal{A})=0$ (see Theorem \ref{Thm:NoSpecGap}). In particular $\{T_{\A}(t)\}_{t\geq 0}$ has not a spectral gap, i.e. $\omega_{\ess}(\A)=\omega_0(\A)$.

11. Finally, we show that if $c_{1},c_{2}$ and $\mu$ are positive
constants and if $\beta _{1}(s):=\inf_{y\geq 0}\beta (s,y)$ is not trivial
then $s(\mathcal{A})>s(\mathcal{B})$, (see Theorem \ref{Thm:Partic_Case}); we can even
provide an explicit lower bound of the spectral gap $s(\mathcal{A})-s(\mathcal{B})$, (see Remark \ref{Rem:Bound_SpecGap}).

Some useful conjectures are also given, see Remark \ref{Rem:Lods}. The authors thank two anonymous referees for their useful remarks and suggestions.

\section{Models with bounded sizes}

\subsection{Framework and hypotheses}\label{Sec:Hyp}

In order to analyse the problem described by \eqref{Eq:Model}-\eqref{Eq:BoundCond}, we define the Banach space
$$\X=(L^1(0,m)\times L^1(0,m), \|.\|_\X)$$
endowed with the norm
$$\|(u_1,u_2)\|_{\X}=\|u_1\|_{L^1(0,m)}+\|u_2\|_{L^1(0,m)}.$$
We denote by $\X_+$ the nonnegative cone of $\X$ and we suppose in all this section the following hypotheses on the different parameters:
\begin{enumerate}
\item $\mu, c_1, c_2 \in L^\infty(0,m)$ and $\gamma_1, \gamma_2 \in W^{1,\infty}(0,m)$,
\item $\beta, \mu, c_1, c_2 \geq 0$ and there exists $\gamma_0>0$ such that for every $s\in[0,m]$, $\gamma_1(s)\geq \gamma_0, \gamma_2(s)\geq \gamma_0 $,
\item the operator
$$K:L^1(0,m)\ni u\mapsto \int_0^m \beta(\cdot,y)u(y)dy \in L^1(0,m)$$
is bounded.
\end{enumerate}
\begin{remark}\label{Rem:Bounded}
The integral operator $K$ is bounded if and only if
$$\sup_{y\in(0,m)} \int_0^m \beta(s,y)ds<\infty.$$
The same remark holds when $m=\infty$.
\end{remark}
Using \eqref{Eq:Model}, we define the operator
\begin{equation*}
\begin{array}{rcl}
\A\begin{pmatrix}
u_1\\
u_2
\end{pmatrix}&=&A\begin{pmatrix}
u_1 \\
u_2
\end{pmatrix}+B\begin{pmatrix}
u_1 \\
u_2
\end{pmatrix}\\
&=&\begin{pmatrix}
-(\gamma_1 u_1)' \\
-(\gamma_2 u_2)'
\end{pmatrix}+\begin{pmatrix}
-(\mu+c_1)u_1+c_2u_2+\int_0^m \beta(\cdot,y)u_1(y)dy) \\
-c_2 u_2+c_1 u_1
\end{pmatrix},
\end{array}
\end{equation*}
with domain
$$D(A)=\{(u_1,u_2)\in W^{1,1}(0,m)\times W^{1,1}(0,m): u_1(0)=0, u_2(0)=0\},$$
where $W^{1,1}(0,m)$ is the Sobolev space
$$W^{1,1}(0,m)=\{u\in L^1(0,m), u'\in L^1(0,m)\}.$$
We decompose $B$ into three bounded operators:
\begin{equation*}
\begin{array}{rcl}
B\begin{pmatrix}
u_ 1\\
u_2
\end{pmatrix}
&=&B_1\begin{pmatrix}
u_1 \\
u_2
\end{pmatrix}
+B_2\begin{pmatrix}
u_1 \\
u_2
\end{pmatrix}
+B_3\begin{pmatrix}
u_1 \\
u_2
\end{pmatrix}\\
&=&\begin{pmatrix}
-(\mu+c_1)u_1 \\
-c_2 u_2
\end{pmatrix} 
+\begin{pmatrix}
c_2 u_2 \\
c_1 u_1
\end{pmatrix}
+\begin{pmatrix}
\int_0^m \beta(\cdot,y)u_1(y)dy \\
0
\end{pmatrix}.
\end{array}
\end{equation*}
We are then concerned with the following Cauchy problem
\begin{equation*}
\left\{
\begin{array}{rcl}
U'(t)&=&\A U(t), \\
U(0)&=&(u^0_1,u^0_2)\in \X,
\end{array}
\right.
\end{equation*}
where
$$U(t)=(u_1(t),u_2(t))^T.$$

\subsection{Semigroup generation}

It is easy to prove:
\begin{lemma}\label{Lemma:Resolv}
Let $H=(h_1,h_2)\in \X$, $\lambda\in \R$ and $U=(\lambda I-A)^{-1}H:=(u_1,u_2)\in D(A)$. We have
\begin{equation}\label{Eq:Range}
\left\{
\begin{array}{rcl}
u_1(s)&=& \displaystyle \dfrac{1}{\gamma_1(s)} \int_0^s h_1(y)\exp\left(-\int_y^s \dfrac{\lambda}{\gamma_1(z)}dz\right)dy \vspace{0.1cm}, \\
u_2(s)&=& \displaystyle \dfrac{1}{\gamma_2(s)} \int_0^s h_2(y)\exp\left(-\int_y^s \dfrac{\lambda}{\gamma_2(z)}dz\right)dy,
\end{array}
\right.
\end{equation}
for every $s\in[0,m]$. In particular, $s(A)=-\infty$ and for every $(h_1,h_2)\in \X_+$,
$$\supp u_1=[\inf \supp h_1,m], \quad \supp u_2=[\inf \supp h_2,m],$$
where $\supp(f)$ refers to the support of a function $f$ and $\inf \supp(f)$ is its lower bound.
\end{lemma}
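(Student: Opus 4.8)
The plan is to solve the resolvent equation $(\lambda I - A)U = H$ directly. Since $A$ acts diagonally — the first component only involves $u_1$, the second only $u_2$ — the system decouples into two identical scalar first-order linear ODEs of the form $\lambda u_i + (\gamma_i u_i)' = h_i$ with the boundary condition $u_i(0) = 0$. So the real work is to solve one such ODE on $(0,m)$ and verify regularity.

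**First I would** rewrite $\lambda u_i + (\gamma_i u_i)' = h_i$ by setting $v_i := \gamma_i u_i$; since $\gamma_i \in W^{1,\infty}(0,m)$ is bounded below by $\gamma_0 > 0$, the map $u_i \mapsto v_i$ is a bijection between $W^{1,1}(0,m)$ and itself, preserving the vanishing trace at $0$. The equation becomes $v_i' + \frac{\lambda}{\gamma_i} v_i = h_i$, a linear ODE with integrating factor $\exp\!\left(\int_0^s \frac{\lambda}{\gamma_i(z)}\,dz\right)$; integrating from $0$ to $s$ and using $v_i(0)=0$ gives
\[
v_i(s) = \int_0^s h_i(y)\exp\!\left(-\int_y^s \frac{\lambda}{\gamma_i(z)}\,dz\right)dy,
\]
and dividing by $\gamma_i(s)$ yields exactly \eqref{Eq:Range}. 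Then I would check that this formula indeed lands in $D(A)$: boundedness of $1/\gamma_i$ shows $v_i \in L^1$, hence $u_i \in L^1$; the ODE itself shows $v_i' = h_i - \frac{\lambda}{\gamma_i}v_i \in L^1$, so $v_i \in W^{1,1}$ and $u_i = v_i/\gamma_i \in W^{1,1}$ (product of a $W^{1,1}$ function and a $W^{1,\infty}$ function); and $v_i(0) = 0$ forces $u_i(0) = 0$. This shows $\lambda I - A$ is onto $\X$, and uniqueness follows since a homogeneous solution $v_i' = -\frac{\lambda}{\gamma_i}v_i$ with $v_i(0)=0$ vanishes identically by Gronwall (or directly by the integrating-factor representation). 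Hence $(\lambda I - A)^{-1}$ exists and is given by \eqref{Eq:Range} for every $\lambda \in \R$, so $\sigma(A) \cap \R = \emptyset$; since $A$ is (by the same computation, now with $\lambda$ complex) resolvent-invertible off a half-plane, one concludes $s(A) = -\infty$.

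**For the support statement**, take $H = (h_1,h_2) \in \X_+$ and $\lambda \in \R$. From the explicit formula, for $s$ fixed the integrand $h_i(y)\exp(-\int_y^s \frac{\lambda}{\gamma_i})$ is nonnegative, so $u_i(s) > 0$ exactly when $\int_0^s h_i(y)\,(\text{positive weight})\,dy > 0$, i.e. when $h_i$ is not a.e. zero on $(0,s)$. Letting $a_i := \inf \supp h_i$, this happens precisely for $s > a_i$ (for $s \le a_i$ the integral is over a null set of the support; for $s > a_i$ there is positive mass of $h_i$ in $(0,s)$ by definition of the infimum of the support). Therefore $\supp u_i = [a_i, m]$, which is the claim.

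**I do not expect a serious obstacle here** — the lemma is genuinely "easy to prove" as the authors say, the transport operator being diagonal and one-dimensional in space. The only points requiring a little care are: (a) justifying that the candidate formula gives an element of $D(A)$ rather than merely a distributional solution, which is the $W^{1,1}$-regularity check above and uses the lower bound $\gamma_i \ge \gamma_0$ crucially; and (b) being precise about "$\supp u_i = [\inf\supp h_i, m]$" at the left endpoint, where one should note that the support of $u_i$ is closed and the positivity of $u_i(s)$ for all $s$ slightly larger than $a_i$ forces $a_i$ itself into the closed support. Neither is deep, so the proof is essentially the integrating-factor computation followed by these two verifications.
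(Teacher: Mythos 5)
Your proof is correct and takes essentially the same route the paper implicitly uses (the lemma is labelled ``easy to prove'', and the explicit formula together with the $W^{1,1}$ estimates reappears in the proof of Theorem \ref{Thm:Generation}): integrating factor, verification that the formula lands in $D(A)$, and reading the support off the explicit positive kernel. The only phrase to sharpen is ``resolvent-invertible off a half-plane'': the same computation with complex $\lambda$ (the bound depends only on $\Re\lambda$) shows every $\lambda\in\C$ lies in $\rho(A)$, hence $\sigma(A)=\emptyset$ and $s(A)=-\infty$.
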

Note that if $h_i\geq 0$, then $u_i(x)>0$ if and only if $x>\inf \supp h_i$.
\begin{theorem}\label{Thm:Generation}
The operator $\A$ generates a $C_0$-semigroup $\{T_\A(t)\}_{t\geq 0}$ of bounded linear operators on $\X$.
\end{theorem}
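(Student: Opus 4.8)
The plan is to realize $\A$ as a bounded perturbation of the generator $A$, since $\mathcal{A}=A+B$ and $B=B_1+B_2+B_3$ is a sum of three bounded operators on $\X$ (the first two are multiplication/coupling by $L^\infty$ functions, and $B_3=K$ is bounded by hypothesis (3)). By the bounded perturbation theorem for $C_0$-semigroups, it therefore suffices to show that $A$ itself generates a $C_0$-semigroup on $\X$; the perturbed operator $A+B$ then generates one on the same domain $D(A)$.

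To show $A$ generates a $C_0$-semigroup I would invoke the Hille--Yosida (or Lumer--Phillips) theorem. First I would check that $A$ is closed and densely defined: $D(A)$ contains $C_c^\infty(0,m)\times C_c^\infty(0,m)$, hence is dense in $\X$, and closedness follows from the explicit resolvent formula. The key computational input is Lemma \ref{Lemma:Resolv}, which gives an explicit formula for $(\lambda I-A)^{-1}$ for real $\lambda$ and shows in particular $s(A)=-\infty$, so that $(\lambda I - A)^{-1}$ exists as a bounded operator for all $\lambda$ in a right half-plane (indeed for all real $\lambda$). From \eqref{Eq:Range} one reads off, using $\gamma_i\geq\gamma_0>0$ and a change of variables, a resolvent bound of the form $\|(\lambda I-A)^{-1}\|_{\L(\X)}\le \tfrac{1}{\lambda}$ for $\lambda>0$ (each component satisfies $\|u_i\|_{L^1}\le \tfrac{1}{\lambda}\|h_i\|_{L^1}$ after integrating in $s$ and swapping the order of integration, since $\int_y^m \tfrac{1}{\gamma_i(s)}\exp(-\int_y^s \tfrac{\lambda}{\gamma_i(z)}dz)\,ds = \tfrac{1}{\lambda}(1-\exp(-\int_y^m\tfrac{\lambda}{\gamma_i(z)}dz))\le \tfrac1\lambda$). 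This is exactly the contraction-type estimate needed; iterating gives $\|(\lambda I - A)^{-n}\|\le \lambda^{-n}$ for all $n$, so the Hille--Yosida conditions hold and $A$ generates a $C_0$-semigroup (in fact a contraction semigroup, consistent with $A$ being dissipative because the transport operator $u\mapsto -(\gamma_i u)'$ with the inflow Dirichlet condition loses mass through the boundary at $s=m$).

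The main obstacle, such as it is, is purely bookkeeping: verifying rigorously that the function given by \eqref{Eq:Range} indeed lies in $D(A)$ (i.e. is $W^{1,1}$, vanishes at $0$, and solves $(\lambda I-A)U=H$), which requires differentiating the integral expression and checking the boundary value — straightforward since $\gamma_i\in W^{1,\infty}$ and $h_i\in L^1$. Once $A$ is established as a $C_0$-semigroup generator, the passage to $\A=A+B$ is immediate from the bounded perturbation theorem, with no compactness or positivity needed at this stage; positivity and irreducibility of $\{T_\A(t)\}_{t\ge 0}$ are separate matters treated later in the paper. Hence the theorem follows.
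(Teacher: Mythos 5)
Your proposal is correct, but it takes a genuinely different route at the key step. Both you and the paper reduce to the generator $A$ via the bounded perturbation theorem ($B=B_1+B_2+B_3$ bounded), and both rely on the explicit resolvent formula \eqref{Eq:Range} for the range condition. The paper then proves that $A$ is \emph{dissipative} by the pointwise sign argument: multiplying $\lambda u_i+(\gamma_i u_i)'=h_i$ by $\sign(u_i)$, decomposing $\{u_i>0\}$ and $\{u_i<0\}$ into countable unions of disjoint open intervals, and using the continuity of $W^{1,1}$ functions to see that the boundary terms reduce to $\gamma_i(m)|u_i(m)|\geq 0$; Lumer--Phillips then gives a contraction semigroup. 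You instead obtain the contraction resolvent bound $\|(\lambda I-A)^{-1}\|\leq 1/\lambda$ for $\lambda>0$ directly from \eqref{Eq:Range} by Fubini, using the exact primitive $\int_y^m \tfrac{1}{\gamma_i(s)}\exp\bigl(-\int_y^s \tfrac{\lambda}{\gamma_i(z)}dz\bigr)ds=\tfrac{1}{\lambda}\bigl(1-\exp\bigl(-\int_y^m \tfrac{\lambda}{\gamma_i(z)}dz\bigr)\bigr)\leq \tfrac{1}{\lambda}$, and invoke Hille--Yosida (the iterates are handled by submultiplicativity, as you note). Your computation is arguably more elementary, since it avoids the somewhat delicate interval-decomposition and boundary-term bookkeeping, and it incidentally exhibits the resolvent positivity used later; note that the paper's resolvent bound from \eqref{Eq:Range} is cruder (of order $e^{|\lambda|m/\gamma_0}$), which is why it needs dissipativity separately, whereas your sharper estimate makes the dissipativity step superfluous. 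The only items you should make explicit are the injectivity of $\lambda I-A$ (uniqueness for the first-order ODE with zero initial datum, or it follows from your own estimate) and the closedness of $A$, which indeed follows once $(\lambda I-A)^{-1}$ is exhibited as a bounded everywhere-defined inverse; with these routine remarks your argument is complete.
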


\begin{proof}
Since $B$ is bounded, it suffices to prove that $A$ generates a contraction semigroup. We easily see that $D(A)$ is densely defined in $\X$. Moreover, for $\lambda\in \R$, the range condition 
$$(\lambda I-A)U=H,$$
with $U=(u_1,u_2)$ and $H=(h_1,h_2)\in \X$, is straightforward since $(u_1,u_2)$ is given by \eqref{Eq:Range}, so 
$$\|u_i\|_{L^1(0,m)}\leq \dfrac{m \|h_i\|_{L^1}}{\gamma_0}\exp\left(\dfrac{|\lambda|m}{\gamma_0}\right)<\infty$$
and 
$$\|u'_i\|_{L^1(0,m)}\leq \dfrac{(|\lambda|+\|\gamma'_i\|_{L^\infty})\|u_i\|_{L^1}+\|h_i\|_{L^1}}{\gamma_0}<\infty$$
for every $i\in\{1,2\}$, hence $U\in D(A)$.
It remains to prove that $A$ is a dissipative operator. Let $\lambda>0$, $U=(u_1,u_2)\in D(A)$, $H=(\lambda I-A)U$ and $H=(h_1,h_2)$. We prove that
$$\|H\|_\X\geq \lambda \|U\|_\X$$
i.e.
$$\|h_i\|_{L^1(0,m)}\geq \lambda \|u_i\|_{L^1(0,m)}, \ \forall i\in\{1,2\}.$$
By definition, we have $u_i(0)=0$ and
$$\lambda u_i(s)+(\gamma_i u_i)'(s)=h_i(s), \ \forall s\in (0,m].$$
We multiply the latter equation by $\sign(u_i(s))$ then integrate between $0$ and $m$. We get
$$\lambda \|u_i\|_{L^1(0,m)}+\int_0^m (\gamma_i u_i)'(s)\sign(u_i(s))ds=\int_0^m h_i(s)\sign(u_i(s))ds.$$
Any nonempty open set of the real line is a finite or countable union
of \textit{disjoints} open intervals (see \cite{Apostol74} Theorem
3.11, p. 51) so 
\begin{align*}
\{u_i>0\}& =\{s\in \left( 0,m\right) :u_i(s)>0\}=\underset{i\in \N }{\cup 
}(a_{i,1},a_{i,2}), \\
\{u_i<0\}& =\{s\in \left( 0,m\right) :u_i(s)<0\}=\underset{i\in \N }{\cup 
}(b_{i,1},b_{i,2}).
\end{align*}%
Since $u_i\in W^{1,1}(0,m)\hookrightarrow C([0,m])$ then $\forall i,j\in 
\N :u_i(a_{i,1})=0,$ $u_i(a_{i,2})=0,$ $u_i(b_{j,1})=0$ and $u_i(b_{j,2})=0$
(except possibly at $m$). Thus 
\begin{eqnarray*}  \label{Eq:Maj_Gamma}
&&\int_{0}^{m}(\gamma_i u_i)^{\prime }sign(u_i) =\int_{\left\{ u_i>0\right\}
}(\gamma_i u_i)^{\prime }-\int_{\left\{ u_i<0\right\} }(\gamma_i u_i)^{\prime } 
\notag \\
&=&\sum_{j\in 
\N  
} \left[\gamma_i (a_{j,2})u_i(a_{j,2})-\gamma_i (a_{j,1})u_i(a_{j,1})
\right]-\sum_{j\in 
\N 
} \left[\gamma_i (b_{j,2})u_i(b_{j,2})-\gamma_i (b_{j,1})u_i(b_{j,1})\right]
\notag \\
&=&\gamma_i (m)\left\vert u_i(m)\right\vert\geq 0.
\end{eqnarray*}
Hence
$$ \lambda \|u_i\|_{L^1}\leq \lambda\|u_i\|_{L^1}+\gamma_i(m)|u_i(m)|= \int_0^m h_i(s)\sign(u_i(s))ds\leq \|h_i\|_{L^1}$$ and we get the dissipativity of $A$. \\
Thus $A$ generates a contraction $C_0$-semigroup $\{T_\A(t)\}_{t\geq 0}$ by Lumer-Phillips Theorem (see \cite{Pazy83} Theorem 4.3, p. 14). Finally, as bounded perturbations of $A$, the operators $A+B_1$, $A+B_1+B_2$ and $\A$ generate quasi-contraction $C_0$-semigroups $\{T_{A+B_1}(t)\}_{t\geq 0}$, $\{T_{A+B_1+B_2}(t)\}_{t\geq 0}$ and $\{T_\A(t)\}_{t\geq 0}$ respectively (that is to say there exists $\omega_0 \geq 0$ such that $\Vert
\{T_{\mathcal{A}}(t)\}_{t\geq 0}\Vert_{\X} \leq e^{\omega_0 t}$, for every $t\geq 0$).
\end{proof}

\subsection{On positivity}

The time asymptotics of $\{T_\A(t)\}_{t\geq 0}$ is related to irreducibility arguments. We remind first some definitions and
results about positive and irreducible operators. We denote by $\langle
\cdot,\cdot\rangle $ the duality pairing between $\X$ and $\X^\prime.$

\begin{definition}\label{Def:Irr}
\mbox{}

\begin{enumerate}
\item For $f\in \X$, the notation $f>0$ means $f\in \X_+$
and $f\neq 0$.

\item An operator $O\in L(\X)$ is said to be positive if $Of\in \X_+$ for any $f\in \X_+$. We note this by $O\geq 0.$

\item A $C_{0}$-semigroup $\{T(t)\}_{t\geq 0}$ on $\X$ is said to be
positive if each operator $T(t)$ is positive.

\item A positive operator $O\in L(\X)$ is said to be positivity
improving if for every $f\in \X$, $f>0$ and every $f'\in \X'$, $f'>0$, we have $\langle
Of,f'\rangle >0$.

\item A positive operator $O\in L(\X)$ is said to be irreducible if
for every $f\in \X$, $f>0$ and every $f'\in \X'$, $f'>0$ there exists an integer $n$ such that $\langle O^{n}f,f'\rangle >0$.

\item A $C_{0}$-semigroup $\{T(t)\}_{t\geq 0}$ on $\X$ is said to be
irreducible if for every $f\in \X$, $f>0$ and every $f' \in \X'$, $f'>0$ there exists $t>0$ such
that $\langle T(t)f,f'\rangle >0$.

\item A subspace $\mathcal{Y}$ of $\X$ is said to be an ideal if $|f|\leq |g|$ and $g\in \mathcal{Y}$ imply $f\in \X$ where $| \cdot |$ denotes the \textit{absolute value}.
\end{enumerate}
\end{definition}

We recall that a $C_{0}$-semigroup $\{T(t)\}_{t\geq 0}$ on $\X$
with generator $\A$ is positive if and only if, for $\lambda $ large enough,
the resolvent operator $(\lambda I-\A)^{-1}$ is positive (see e.g. \cite{Clement87}, p. 165). We recall also
that a positive $C_{0}$-semigroup $\{T(t)\}_{t\geq 0}$ on $\X$ with generator $\A$ is irreducible if and only if, for $\lambda $ large enough, the resolvent operator $
(\lambda I-\A)^{-1}$ is positivity improving, if and only if, for $\lambda$ large enough, there is no closed ideal of $\X$ (except $\X$ and $\{0\}$) which is invariant under $(\lambda-\A)^{-1}$ (see \cite{Nagel86} C-III, Definition 3.1, p. 306).

\begin{definition}
For a closed operator $\A:D(\A)\subset \X\rightarrow \X$, we
denote by $\sigma (\A)$ its spectrum, $\rho(\A)$ its resolvent set and $s(\A)$ its spectral bound defined by

\begin{equation*}
s(\A):=\begin{cases}
\sup \left\{\Re(\lambda) ; \lambda \in \sigma(\A)\right\} & \mbox{if } \sigma(\A)\neq \emptyset, \\ 
-\infty & \mbox{if }\sigma(\A)=\emptyset.
\end{cases}
\end{equation*}
\end{definition}
We recall the following result which is a particular version of \cite{Voigt89}, Theorem 1.1.
\begin{lemma}\label{Lemma:Voigt}
Let $\A$ be a resolvent positive operator in $\X$ and $B\in \L(\X)$ a positive operator. We have
\begin{equation}\label{Eq:Voigt_Sum}
(\lambda-\A-B)^{-1}=(\lambda-\A)^{-1}\overset{\infty}{\underset{n=0}{\sum}}(B(\lambda-\A)^{-1})^n
\end{equation}
for every $\lambda>s(\A+B)$ and
\begin{equation}\label{Eq:Voigt}
s(\A+B)=\inf\{\lambda>s(\A): r_\sigma(B(\lambda-\A)^{-1})<1\}.
\end{equation}
\end{lemma}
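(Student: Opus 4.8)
The plan is to deduce both assertions from the perturbation theory of resolvent positive operators, exploiting that $\X=L^1(0,m)\times L^1(0,m)$ is an AL-space, so its norm is additive, hence monotone, on the positive cone. Throughout write $R(\lambda)=(\lambda-\A)^{-1}$, positive for $\lambda>s(\A)$, and $\tilde R(\lambda)=(\lambda-\A-B)^{-1}$, positive for $\lambda>s(\A+B)$ since $\A+B$ is again resolvent positive.

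First I would record monotonicity. For $s(\A)<\mu\le\lambda$ the resolvent identity $R(\mu)-R(\lambda)=(\lambda-\mu)R(\mu)R(\lambda)\ge 0$ gives $0\le BR(\lambda)\le BR(\mu)$, whence $r_\sigma(BR(\lambda))\le r_\sigma(BR(\mu))$ by monotonicity of the spectral radius on the positive cone of a Banach lattice. As $\|R(\lambda)\|\to 0$ when $\lambda\to\infty$, the set $\{\lambda>s(\A):r_\sigma(BR(\lambda))<1\}$ is a nonempty half-line; let $\lambda_\ast$ be its infimum. For $\lambda>\lambda_\ast$ the Neumann series $\sum_{n\ge 0}(BR(\lambda))^n$ converges in $\L(\X)$, and from the factorisation $\lambda-\A-B=(I-BR(\lambda))(\lambda-\A)$ (valid on $D(\A)=D(\A+B)$) one checks that $R(\lambda)\sum_{n\ge 0}(BR(\lambda))^n$ --- which maps into $D(\A)$ because the bounded factor acts first --- inverts $\lambda-\A-B$. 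Hence $(\lambda_\ast,\infty)\subset\rho(\A+B)$, so $s(\A+B)\le\lambda_\ast$, and \eqref{Eq:Voigt_Sum} holds for all $\lambda>\lambda_\ast$ with norm convergence of the series.

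For the reverse inequality I would first check $s(\A)\le s(\A+B)$: this is trivial if $\sigma(\A)=\emptyset$; otherwise $s(\A)\in\sigma(\A)$ by the Perron--Frobenius property of resolvent positive operators, so $\|R(\lambda)\|\to\infty$ as $\lambda\downarrow s(\A)$, whereas if $s(\A+B)<s(\A)$ then $\tilde R(\lambda)\ge R(\lambda)\ge 0$ for $\lambda$ slightly above $s(\A)$ forces $\|\tilde R(\lambda)\|\ge\|R(\lambda)\|\to\infty$, contradicting that $\tilde R$ stays norm-bounded near $s(\A)\in\rho(\A+B)$. Now fix $\varepsilon>0$ and put $\lambda=s(\A+B)+\varepsilon$, so $\lambda>s(\A)$. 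Invertibility of $\lambda-\A-B$ and of $\lambda-\A$ in the factorisation above forces $1\in\rho(BR(\lambda))$ with $(I-BR(\lambda))^{-1}=(\lambda-\A)\tilde R(\lambda)=I+B\tilde R(\lambda)\ge I$. Setting $T:=BR(\lambda)\ge 0$, the chain $(I-T)^{-1}\ge\sum_{k=0}^{n}T^{k}\ge T^{n}\ge 0$ together with monotonicity of the norm yields $\|T^n\|\le\|(I-T)^{-1}\|$, hence $r_\sigma(T)\le 1$; but $r_\sigma(T)\in\sigma(T)$ for a positive operator on a Banach lattice while $1\in\rho(T)$, so $r_\sigma(BR(\lambda))<1$. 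Thus $\lambda_\ast\le s(\A+B)+\varepsilon$ for every $\varepsilon>0$, giving $\lambda_\ast\le s(\A+B)$ and, with the previous step, $s(\A+B)=\lambda_\ast$, which is \eqref{Eq:Voigt}.

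The only non-elementary inputs are two classical facts --- the spectral bound of a resolvent positive operator is a spectral value, and the spectral radius of a positive operator on a Banach lattice lies in its spectrum --- combined with the AL-structure of $\X$ used to dominate operator norms by the order. These are precisely the ingredients packaged in \cite{Voigt89}, Theorem 1.1, from which the lemma may alternatively be quoted directly; the point requiring the most care in a self-contained argument is the domain bookkeeping when identifying $R(\lambda)\sum_n(BR(\lambda))^n$ with $(\lambda-\A-B)^{-1}$.
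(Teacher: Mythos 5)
Most of your argument is the standard one (the paper itself gives no proof of this lemma; it simply quotes Voigt's Theorem 1.1), and your second half --- $(I-BR(\lambda))^{-1}=I+B(\lambda-\A-B)^{-1}\ge 0$, hence $\|T^{n}\|\le\|(I-T)^{-1}\|$ for $T=BR(\lambda)$, hence $r_\sigma(T)\le 1$, and then $r_\sigma(T)\in\sigma(T)$ together with $1\in\rho(T)$ forces $r_\sigma(T)<1$ --- is fine as far as it goes. The genuine gap is the sentence ``as $\|R(\lambda)\|\to 0$ when $\lambda\to\infty$, the set $\{\lambda>s(\A):r_\sigma(BR(\lambda))<1\}$ is a nonempty half-line''. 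For a merely resolvent positive operator that decay is false. On the AL-space $\ell^{1}$ take $\A=\bigoplus_{n}A_{n}$ with $A_{n}=\begin{pmatrix}-n&n^{2}\\0&-n\end{pmatrix}$: each $(\lambda-A_{n})^{-1}=\begin{pmatrix}\tfrac{1}{\lambda+n}&\tfrac{n^{2}}{(\lambda+n)^{2}}\\0&\tfrac{1}{\lambda+n}\end{pmatrix}$ is entrywise nonnegative and uniformly bounded in $n$ for $\lambda>0$, so $\A$ is resolvent positive (with dense domain), yet $\|(\lambda-\A)^{-1}\|\ge\sup_{n}n^{2}/(\lambda+n)^{2}=1$ for every $\lambda>0$. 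Worse, with the bounded positive $B=\bigoplus_{n}\begin{pmatrix}0&0\\1&0\end{pmatrix}$ one gets $r_\sigma\bigl(B(\lambda-\A)^{-1}\bigr)=1$ for all $\lambda>s(\A)$, so the set in \eqref{Eq:Voigt} is empty, $\A+B$ is not resolvent positive, and $\sigma(\A+B)\supset[0,\infty)$, i.e. $s(\A+B)=+\infty$. So under the lemma's stated hypotheses both the decay claim and the nonemptiness you rely on can fail.

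This also undercuts the logic of your converse inequality: the positivity of $(\lambda-\A-B)^{-1}$ for $\lambda>s(\A+B)$, which you invoke at the outset, is not a hypothesis; it has to be obtained from the Neumann-series half (positive resolvent on $(\lambda_\ast,\infty)$, then the standard fact that a resolvent positive operator has positive resolvent on all of $(s(\A+B),\infty)$), and that derivation is available only when $\lambda_\ast<\infty$, i.e. exactly when the set is nonempty. In the degenerate case your proof establishes nothing, and showing that then necessarily $s(\A+B)=+\infty$ (so that \eqref{Eq:Voigt} survives with $\inf\emptyset=+\infty$) is precisely where Voigt's AL-space hypothesis does real work --- notice that your argument never uses the AL-structure beyond Banach-lattice norm monotonicity, which is a sign that the hard case is not being reached. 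The gap is harmless for the way the lemma is used in this paper: there $\A$ is always the generator of a quasi-contractive positive semigroup, so $\|(\lambda-\A)^{-1}\|\le(\lambda-\omega)^{-1}\to 0$, the set is nonempty, and your proof (with the resolvent positivity of $\A+B$ supplied first by the Neumann series) is complete. Either add that hypothesis explicitly, or quote \cite{Voigt89} for the general statement, as the paper does.
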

Here $r_\sigma(\cdot)$ refers to the spectral radius. We introduce the following assumptions
\begin{equation}
\label{Eq:H1}
\forall \varepsilon\in(0,m), \quad \int_0^\varepsilon \int_{\varepsilon}^m \beta(s,y)dyds>0,
\end{equation}
\begin{equation}
\label{Eq:H2}
\inf \text{supp } c_1=0,
\end{equation}
\begin{equation}
\label{Eq:H3}
\sup \text{supp } c_2=m.
\end{equation}

\begin{theorem}\label{Thm:Irr}
The $C_0$-semigroup $\{T_\A(t)\}_{t\geq 0}$ is  irreducible if and only if the assumptions \eqref{Eq:H1}-\eqref{Eq:H2}-\eqref{Eq:H3} are satisfied.
\end{theorem}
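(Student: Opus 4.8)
The plan is to prove both implications via the characterization of irreducibility in terms of the resolvent $(\lambda-\mathcal{A})^{-1}$ being positivity improving for $\lambda$ large, equivalently the absence of nontrivial closed invariant ideals. For the sufficiency direction, I would fix $\lambda$ large and use the Voigt expansion \eqref{Eq:Voigt_Sum}: $(\lambda-\mathcal{A})^{-1}=(\lambda-A)^{-1}\sum_{n\geq 0}(B(\lambda-A)^{-1})^n$, where $B=B_1+B_2+B_3$. Since $B_1\leq 0$ this is a bit awkward, so it is cleaner to work with $\tilde{A}=A+B_1$ (which still has $s(\tilde A)=-\infty$ by Lemma \ref{Lemma:Resolv}, perturbed by a bounded multiplication operator) and expand $(\lambda-\mathcal{A})^{-1}=(\lambda-\tilde A)^{-1}\sum_{n\geq 0}((B_2+B_3)(\lambda-\tilde A)^{-1})^n$, all terms now genuinely positive. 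Take $f=(f_1,f_2)>0$ and $f'=(f_1',f_2')>0$; I must exhibit $n$ with $\langle ((B_2+B_3)(\lambda-\tilde A)^{-1})^n(\lambda-\tilde A)^{-1}f, f'\rangle>0$. The key geometric input is Lemma \ref{Lemma:Resolv}: applying $(\lambda-\tilde A)^{-1}$ to a nonnegative function supported near $0$ produces a function strictly positive on all of $(\inf\supp\, \cdot\, , m]$ — the transport pushes mass to the right and fills up to $m$.

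The combinatorial heart of the sufficiency proof is to show mass can be moved into an arbitrarily prescribed ``test region'' using the three mechanisms. Start with $f>0$: at least one component, say $u_1$ or $u_2$, is nonzero after one application of $(\lambda-\tilde A)^{-1}$, and by Lemma \ref{Lemma:Resolv} the relevant component becomes positive on $(a,m]$ for some $a<m$. If the mass sits in the first component, the operator $B_3$ (using \eqref{Eq:H1}: for every $\varepsilon$, $\beta$ moves some mass from $(\varepsilon,m)$ into $(0,\varepsilon)$) lets us regenerate first-component mass supported near $0$, so after another $(\lambda-\tilde A)^{-1}$ the first component is positive on all of $(0,m]$; pairing with $f_1'>0$ gives the result. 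If instead the initial mass is only in the second component (i.e. $f_1=0$, $f_2>0$), I cannot directly reach $f_1'$, so I must transfer: $B_2$ sends $c_1 u_1$ into the second equation and $c_2 u_2$ into the first. Using \eqref{Eq:H3}, $\sup\supp c_2=m$ means $c_2u_2$ is nonzero whenever $u_2$ is positive near $m$, which it is after transport; this injects mass into the first component, and then the previous paragraph applies. Symmetrically, if one needs to reach $f_2'$ starting from first-component mass, use \eqref{Eq:H2}: $\inf\supp c_1=0$ ensures $c_1u_1$ is nonzero (since $u_1>0$ just right of $0$), injecting mass into the second component near $0$, which then spreads to $(0,m]$ by transport. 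Chaining finitely many such steps handles all four sign patterns of $(f,f')$, proving $(\lambda-\mathcal{A})^{-1}$ is positivity improving.

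For the necessity direction, I argue the contrapositive: if one of \eqref{Eq:H1}-\eqref{Eq:H2}-\eqref{Eq:H3} fails, I construct a nontrivial closed invariant ideal, hence $\{T_{\mathcal{A}}(t)\}$ is reducible. If \eqref{Eq:H2} fails, then $c_1\equiv 0$ on $(0,\delta)$ for some $\delta>0$; then no mass ever enters the second equation from sizes below $\delta$, and one checks (using Lemma \ref{Lemma:Resolv} and the explicit structure of $(\lambda-\tilde A)^{-1}$, together with $B_2,B_3$) that the ideal $\mathcal{Y}=L^1(0,m)\times\{u_2: u_2=0 \text{ a.e. on }(0,\delta)\}$ — more precisely the appropriate closed ideal of functions vanishing on a left-neighbourhood in the second slot — is invariant under the resolvent. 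If \eqref{Eq:H3} fails, $c_2\equiv 0$ on $(\delta,m)$, and symmetrically the ideal $\{u_1: u_1=0 \text{ a.e. on }(\delta,m)\}\times\{0\}$ is invariant: first-component mass near $m$ can never return to the first component (no $c_2$ there) and the only feedback $B_3$ lands in the first component but transport only moves right, so once $u_1$ vanishes past $\delta$ it stays so and $u_2$ is never fed. If \eqref{Eq:H1} fails, there is $\varepsilon\in(0,m)$ with $\int_0^\varepsilon\int_\varepsilon^m\beta=0$, meaning $\beta(s,y)=0$ for a.e. $s<\varepsilon<y$; then mass in $u_1$ above $\varepsilon$ never produces offspring below $\varepsilon$, and combined with the Dirichlet transport (which, once $u_1$ is supported in $[\varepsilon,m]$, keeps it there) and the fact that $B_2$ only shuffles between the two species at the same size, the ideal $\{u_1=0 \text{ on }(0,\varepsilon)\}\times\{u_2=0\text{ on }(0,\varepsilon)\}$ is invariant under $(\lambda-\mathcal{A})^{-1}$.

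The main obstacle I anticipate is the necessity direction's ideal-invariance verifications, specifically confirming that the candidate ideals are genuinely invariant under the \emph{full} resolvent $(\lambda-\mathcal{A})^{-1}$ rather than just under $(\lambda-\tilde A)^{-1}$ and the individual $B_i$'s — one must control the infinite Neumann series \eqref{Eq:Voigt_Sum} termwise and check each monomial $((B_2+B_3)(\lambda-\tilde A)^{-1})^n(\lambda-\tilde A)^{-1}$ preserves the ideal, using that the relevant support constraints are stable under each factor. For \eqref{Eq:H1} failing this requires a careful bookkeeping of how $\beta$, the transport semigroup, and the species-transfer term $B_2$ interact on the size interval $(0,\varepsilon)$; the subtlety is that $B_2$ moves mass between components but not across sizes, and transport is one-directional, so the interval $[\varepsilon,m]$ is forward-invariant for the support of each component separately — this is what makes the ideal work, but it must be stated precisely. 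On the sufficiency side, the only delicate point is ensuring the ``spreading to all of $(0,m]$'' happens in finitely many steps uniformly — but since each $(\lambda-\tilde A)^{-1}$ application already fills up to $m$ from any nonzero left-supported input (Lemma \ref{Lemma:Resolv}), two or three applications always suffice, so this is routine once the case analysis is organized.
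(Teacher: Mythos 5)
Your overall strategy is the same as the paper's (a Voigt/Neumann expansion of the resolvent for sufficiency, and invariant closed ideals for the contrapositive), and your ideals for the failure of \eqref{Eq:H1} and \eqref{Eq:H2} coincide with the paper's and are fine, as is handling $B_1$ by working with $\tilde A=A+B_1$ instead of the paper's domination trick. But two steps are genuinely wrong. First, in the necessity direction for \eqref{Eq:H3} failing, your candidate ideal $\{u_1:u_1=0 \text{ a.e. on }(\delta,m)\}\times\{0\}=L^1(0,\delta)\times\{0\}$ is \emph{not} invariant: by Lemma \ref{Lemma:Resolv}, $(\lambda-A)^{-1}$ already maps a nonnegative $h_1$ supported in $(0,\delta)$ to a function whose support is $[\inf\supp h_1,m]$, so rightward transport instantly creates first-component mass above $\delta$; your assertion that ``once $u_1$ vanishes past $\delta$ it stays so'' contradicts the very one-directionality of the transport you invoke elsewhere. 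The ideal that actually works (and is used in the paper) is $\{0\}\times L^1(k,m)$ with $k=\sup\supp c_2$: second-component mass above $k$ is never re-activated (since $c_2=0$ there and transport keeps it above $k$), is killed by $B_2$ and $B_3$, and hence never feeds the first component.

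Second, in the sufficiency direction, the claim that one application of $B_3$ regenerates ``first-component mass supported near $0$'' and that ``two or three applications always suffice'' does not follow from \eqref{Eq:H1}. If $u_1>0$ exactly on $(a,m]$, then \eqref{Eq:H1} with $\varepsilon=a$ only guarantees that $Ku_1$ has some mass strictly below $a$, not near $0$: for a kernel concentrated near $s=y/2$, each round of $B_3(\lambda-A)^{-1}$ roughly halves the support infimum, so no \emph{finite} number of applications makes the first component positive a.e.\ on $(0,m)$ — yet a.e.\ positivity is exactly what is needed, since the test functional $f'>0$ may be supported arbitrarily close to $0$, and only afterwards can \eqref{Eq:H2} be used to feed the second component (because $c_1$ may be supported only near $0$). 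This is precisely where the paper's Step 2 works with the \emph{full} series $\sum_{n\geq 1}\bigl(B_3(\lambda-A)^{-1}\bigr)^n$ and argues by contradiction (the series dominates one further application of $B_3(\lambda-A)^{-1}$ to itself, so if the support infimum $k$ of its output were positive, \eqref{Eq:H1} would push mass below $k$, contradicting the definition of $k$) that the support infimum of the summed output is $0$. Your argument can be repaired along these lines, but as written the finite-step spreading claim is a gap, not a routine point.
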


\begin{proof}
\begin{enumerate}
\item Note first that the semigroup $\{T_\A(t)\}_{t\geq 0}$ is positive. Indeed, using Lemma \ref{Lemma:Resolv}, we readily see that the semigroup $\{T_{\A}(t)\}_{t\geq 0}$ is positive since $(\lambda I-A)^{-1}$ is positive for every $\lambda>-\infty$. Since $B_1$ is a bounded operator and
$$B_1+\|B_1\|I\geq 0,$$
then it follows (see e.g. \cite{Nagel86} Theorem 1.11, C-II, p. 255) that $\{T_{A+B_1}(t)\}_{t\geq 0}$ is positive. Finally, since $B_2$ and $B_3$ are positive operators, then the $C_0$-semigroups $\{T_{A+B_1+B_2}(t)\}_{t\geq 0}$ and $\{T_\A(t)\}_{t\geq 0}$ are also positive.

\item Now we suppose that the assumptions \eqref{Eq:H1}-\eqref{Eq:H2}-\eqref{Eq:H3} are satisfied and we prove that $(\lambda I-\A)^{-1}$ is positivity improving for $\lambda$ large enough. Actually, since $B_1+\|B_1\| I\geq 0$, we have
\begin{eqnarray*}
(\lambda I-\A)^{-1}&=&((\lambda+\|B_1\|)I-A-(B_1+\|B_1\|I)-B_2-B_3)^{-1} \\
&\geq& ((\lambda+\|B_1\|)I-A-B_2-B_3)^{-1}
\end{eqnarray*}
so it suffices to show that $(\lambda I-A-B_2-B_3)^{-1}$ is positivity improving for $\lambda$ large enough. \\
Using \eqref{Eq:Voigt_Sum}, we first see that
\begin{eqnarray}
&&\left(\lambda I-A-B_2-B_3\right)^{-1}=\left(\lambda I-A-B_2\right)^{-1}\sum_{n=0}^{\infty }\left(B_3\left(\lambda I-A-B_2\right)^{-1}\right)^n \nonumber \\
&& \qquad =(\lambda I-A)^{-1}\sum_{l=0}^{\infty}\left(B_2(\lambda I-A)^{-1}\right)^l\sum_{n=0}^{\infty }\left(B_3(\lambda I-A-B_2)^{-1}\right)^n. \label{Eq:Resolv}
\end{eqnarray}
Since we have
$$\sum_{l=0}^\infty \left(B_2(\lambda I-A)^{-1}\right)^l\geq I+B_2(\lambda I-A)^{-1}$$
then we get
\begin{eqnarray*}
&&\sum_{n=0}^{\infty }\left(B_3(\lambda I-A-B_2)^{-1}\right)^n \\
&\geq& \sum_{n=1}^{\infty }\left(B_3(\lambda I-A-B_2)^{-1}\right)^{n-1}B_3(\lambda I-A-B_2)^{-1} \\
&\geq& \sum_{n=1}^{\infty }\left(B_3(\lambda I-A)^{-1}\right)^{n-1}B_3(\lambda I-A)^{-1}\sum_{l=0}^\infty \left(B_2(\lambda I-A)^{-1}\right)^l \\
&\geq& \sum_{n=1}^{\infty }\left(B_3(\lambda I-A)^{-1}\right)^{n}(I+B_2(\lambda I-A)^{-1}).
\end{eqnarray*}
Consequently we have
\begin{eqnarray*}
& &(\lambda I-A-B_2-B_3)^{-1}\\
\geq & & (\lambda I-A)^{-1}(I+B_2(\lambda I-A)^{-1})\sum_{n=1}^\infty(B_3(\lambda I-A)^{-1})^n (I+B_2(\lambda I-A)^{-1}).
\end{eqnarray*}
Let $U:=(u_1,u_2)=(\lambda I-A-B_2-B_3)^{-1}H$ with $H\in \X_+$.
Let us show that 
\begin{equation*}
u_1(s)>0, \quad u_2(s)>0 \quad \text{ a.e.}
\end{equation*}
once 
\begin{equation*}
H=(h_1, h_2)\in \X_{+}-\left\{ 0\right\}.
\end{equation*}
Step 1: we start by proving that 
\begin{equation}\label{Eq:Irr_Step1}
\forall H\in \X_+-\left\{0\right\}, \exists \ h\in L^1_+(0,m)-\left\{0\right\}:(I+B_2(\lambda I-\A)^{-1})H\geq (h,0).
\end{equation}
If $H:=(h_1,0)$, then it is clear that \eqref{Eq:Irr_Step1} is satisfied, by taking $h=h_1$.
If $H:=(0,h_1)$, then, using Lemma \ref{Lemma:Resolv}, we get
$$(\lambda I-A)^{-1}H=:(0,h_2)\in D(A)$$
where
$$\supp h_2=[\inf \supp h_1,m].$$
By assumption \eqref{Eq:H3}, we have
$$ |\supp c_2 \cap \supp h_2| \neq 0$$
where $|I|$ denotes the Lebesgue measure of an interval $I$. Thus
$$B_2(\lambda I-A)^{-1}H=(c_2 h_2,0)$$
and \eqref{Eq:Irr_Step1} is satisfied with $h=c_2h_2$. In any case it suffices to show that
$$(\lambda I-A)^{-1}(I+B_2(\lambda I-A)^{-1})\sum_{n=1}^\infty(B_3(\lambda I-A)^{-1})^n H>0 \quad \text{a.e.}$$
for every $H=(h,0)\in \X_+ -\left\{0\right\}$. We have $(\lambda I-A)^{-1}H=(h_1,0)\in D(A)$,
with
$$ \supp h_1=[\inf \supp h,m].$$

Step 2: now we prove that for every $H:=(h,0)\in \X_+-\left\{0\right\}$, then
\begin{equation}\label{Eq:Irr_Step2}\left( \sum_{n=1}^\infty(B_3(\lambda I-A)^{-1})^n \right)H=:(\tilde{h},0) \quad \text{where} \quad \inf \supp(\tilde{h})=0.
\end{equation}
Let $H:=(h_1,0)\in \X_+-\left\{0\right\}$, then
$$\sum_{n=1}^\infty (B_3(\lambda I-A)^{-1})^n H=:(h_2,0).$$
Suppose by contradiction that
$$k:=\inf \supp h_2>0.$$
Using Lemma \ref{Lemma:Resolv}, we get
$$(\lambda I-A)^{-1}(h_2,0)=:(h_3,0),$$
with $\supp h_3=[k,m]$ and we have
$$B_3(\lambda I-A)^{-1}(h_2,0)=:(h_4,0).$$
If 
\begin{equation}\label{Eq:Contrad}
\tilde{k}:=\inf \supp h_4<k
\end{equation}
holds, then we get a contradiction by definition of $k$ and \eqref{Eq:Irr_Step2} is satisfied. So it remains to prove \eqref{Eq:Contrad}. Suppose by contradiction that $\tilde{k}\geq k$, then we get $h_4\equiv 0$ on $[0,k]$ and
$$\int_{k}^m \beta(s,y)h_3(y)dy\leq \int_0^m \beta(s,y)h_3(y)dy=h_4(s)=0 \ \text{a.e. } s\in[0,k].$$
Moreover, since $h_3(y)>0$ a.e. $y\in(k,m]$, we would get
$$\int_{k}^m \beta(s,y)dy=0, \ \text{a.e. }s\in [0,k]$$
which contradicts Assumption \eqref{Eq:H1}. 

Step 3: we finally prove that
\begin{equation}\label{Eq:Irr_Step3}
(\lambda I-A)^{-1}\left(I+B_2(\lambda I-A)^{-1}\right)H>0 \quad \text{ a.e}
\end{equation}
for every $H=(h,0)\in \X-\left\{0\right\}$ such that $\inf \supp h=0.$

Using Lemma \ref{Lemma:Resolv} we have
$$(\lambda I-A)^{-1}H=(h_1,0),$$
where $h_1(s)>0$ for every $s\in (0,m]$.
Using Assumption \eqref{Eq:H1} we get
$$B_2(\lambda I-A)^{-1}H=B_2(h_1,0)=:(0,h_2),$$
where $h_2:=c_1h_1$ satisfies
$$\inf \supp h_2=0.$$
Once again with Lemma \ref{Lemma:Resolv}, we get
$$(\lambda I-A)^{-1}(0,h_2)=:(0,h_3),$$
where $h_3(s)>0$ for every $s\in (0,m]$. Finally
$$(u_1,u_2):=U=(\lambda I-A)^{-1}(I+B_2(\lambda I-A)^{-1})H\geq (h_1,h_3)$$
so
\begin{equation*}
u_1(s)>0, \quad u_2(s)>0 \quad \text{ a.e.}
\end{equation*}
and $\{T_\A(t)\}_{t\geq 0}$ is irreducible.

\item Now, to prove the converse, we use the contraposition. We suppose that either \eqref{Eq:H1}, \eqref{Eq:H2} or \eqref{Eq:H3} is not satisfied. In each case, we exhibit a nontrivial closed ideal of $\X$ that is invariant under $(\lambda I-\A)^{-1}$, which implies that the $C_0$-semigroup $\{T_{\A}(t)\}_{t\geq 0}$ is not irreducible.
\begin{enumerate}
\item Suppose that \eqref{Eq:H1} does not hold, then 
\begin{equation}\label{Eq:H1-notverif}
\exists \ \varepsilon \in (0,m): \int_0^\varepsilon \int_\varepsilon^m \beta(s,y)dyds=0
\end{equation}
i.e.
$$\beta(s,y)=0 \quad \text{a.e. } \ s<\varepsilon<y.$$
We identify $L^1(\varepsilon,m)$ to the closed subspace of $L^1(0,m)$ of functions vanishing a.e. on $(0,\varepsilon)$. Let $\lambda>s(\A)$, we want to prove that
$$\Y:=L^1(\varepsilon,m)\times L^1(\varepsilon,m)$$
is a closed ideal of $\X$ that is invariant under $(\lambda I-\A)^{-1}$. Since $B_1\leq 0$, we have
\begin{equation}\label{Eq:Resolv_maj}
(\lambda I -\A)^{-1}\leq (\lambda I-(A+B_2+B_3))^{-1}
\end{equation}
where the latter resolvent is given by \eqref{Eq:Resolv}. Using Lemma \ref{Lemma:Resolv} we see that $\Y$ is invariant under $(\lambda I-A)^{-1}$. It is also clear that $\Y$ is invariant under $B_2$ and consequently also under $(\lambda I-(A+B_2))^{-1}$ by using \eqref{Eq:Voigt_Sum}. It remains to prove that $\Y$ is invariant under $B_3$. Let
$$H:=(h_1,h_2)\in \Y, \qquad B_3H=:(u,0),$$
where
$$u(s)=\int_0^m \beta(s,y)h_1(y)dy=\int_\varepsilon^m \beta(s,y)h_1(y)dy=0 \qquad \text{a.e.} \quad s\in [0,\varepsilon]$$
by Assumption \eqref{Eq:H1-notverif}. Thus $\Y$ is invariant under $B_3$ and consequently under $(\lambda I-(A+B_2+B_3))^{-1}$ by using \eqref{Eq:Voigt_Sum}. Finally, $\Y$ is invariant under $(\lambda I-\A)^{-1}$ by using \eqref{Eq:Resolv_maj}.

\item Suppose that \eqref{Eq:H2} does not hold. Let $\lambda>s(\A)$ and
$$k:=\inf \supp c_1>0.$$
We want to prove that
$$\Y:=L^1(0,m)\times L^1(k,m)$$
is a closed ideal of $\X$ that is invariant under $(\lambda I-\A)^{-1}$. Let $H:=(h_1,h_2)\in \Y$.
Using \eqref{Eq:Resolv_maj}, we have
$$(\lambda I-\A)^{-1}H\leq (\lambda I-(A+B_2+B_3))^{-1}H=:(u_1,u_2)$$
where $(u_1, u_2)\in D(A)$ satisfy 
\begin{equation*}
\left\{
\begin{array}{lll}
\lambda u_2(s)+(\gamma_2 u_2)'(s)-c_1(s)u_1(s)=h_2(s) \ \text{a.e. } s\in[0,m], \\
u_2(0)=0.
\end{array}
\right.
\end{equation*}
We then get
$$\lambda u_2(s)+(\gamma_2 u_2)'(s)=0 \text{ a.e. } s\in [0,k]$$
which lead to
$$u_2\equiv 0 \text{ on } [0,k].$$
Consequently $\Y$ is invariant under $(\lambda I-(A+B_2+B_3))^{-1}$ and under $(\lambda I-\A)^{-1}$ using \eqref{Eq:Resolv_maj}.

\item Suppose that \eqref{Eq:H3} does not hold. Let $\lambda>s(\A)$ and
$$k:=\sup \supp c_2<m.$$
We want to prove that
$$\Y:=\{0\}\times L^1(k,m)$$
is a closed ideal of $\X$ that is invariant under $(\lambda I-\A)^{-1}$. Using Lemma \ref{Lemma:Resolv}, we see that $\Y$ is invariant under $(\lambda I-A)^{-1}$. Moreover, let $H:=(0,h_1)\in \Y$, then we have
$$B_2 H=(c_2h_1,0)=(0,0)$$
since 
$$\supp (c_2) \cap \supp (h_1) = \emptyset.$$
Consequently, $\Y$ is invariant under $B_2$. It remains to prove that it is also invariant under $B_3$. But this is obvious since
$$B_3 H=(0,0).$$
Consequently, $\Y$ is invariant under $(\lambda I-(A+B_2+B_3))^{-1}$ and $(\lambda I-\A)^{-1}$ by using \eqref{Eq:Voigt_Sum}.
\end{enumerate}
\end{enumerate}
\end{proof}

We note that in \cite{Farkas2010}, the irreducibility is obtained under the assumptions \eqref{Eq:H2}-\eqref{Eq:H3} and the following one:
$$\exists \ \varepsilon_0>0 : \forall \varepsilon\in(0,\varepsilon_0], \qquad \int_0^\varepsilon \int_{m-\varepsilon}^m \beta(s,y)dyds>0.$$
In the continuous case, this latter assumption implies $\beta(0,m)>0$, so active cells of maximal size can produce offspring of minimal size. This is not necessary in our statement. The biological meaning of \eqref{Eq:H2}-\eqref{Eq:H3} is the following: active cells of minimal size can become quiescent, and quiescent cells of maximal size can become active.

\subsection{On the spectral bound}

We start with a useful
\begin{lemma}\label{Lemma:Volterra}
Let $k>0$ a positive constant and define the so-called Volterra operator $V:L^1(0,m)\to L^1(0,m)$ by
$$Vh(s)=k\int_0^s h(y)dy.$$
Then $r_\sigma(V)=0$ and $\sigma(V)=\{0\}$.
\end{lemma}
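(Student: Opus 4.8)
\textbf{Proof plan for Lemma \ref{Lemma:Volterra}.}

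The plan is to exploit the fact that $V$ is a quasi-nilpotent operator, and the cleanest route is to estimate the norms of the iterates $V^n$ directly and then invoke the spectral radius formula $r_\sigma(V)=\lim_{n\to\infty}\|V^n\|^{1/n}$. First I would compute that the $n$-th iterate of $V$ has the explicit kernel form
\begin{equation*}
V^n h(s)=\frac{k^n}{(n-1)!}\int_0^s (s-y)^{n-1}h(y)\,dy,
\end{equation*}
which follows by an easy induction: assuming the formula for $n$, one swaps the order of integration in $V^{n+1}h(s)=k\int_0^s V^n h(z)\,dz$ and computes the inner integral $\int_y^s (z-y)^{n-1}dz=(s-y)^n/n$. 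From this representation one gets the bound, for $h\in L^1(0,m)$,
\begin{equation*}
\|V^n h\|_{L^1(0,m)}\le \frac{k^n}{(n-1)!}\int_0^m\!\!\int_0^s (s-y)^{n-1}|h(y)|\,dy\,ds\le \frac{(km)^n}{n!}\,\|h\|_{L^1(0,m)},
\end{equation*}
after interchanging the order of integration (the $y$-integral of $(s-y)^{n-1}$ over $s\in(y,m)$ is at most $m^n/n$ up to the obvious factor). Hence $\|V^n\|^{1/n}\le km/(n!)^{1/n}\to 0$, so $r_\sigma(V)=0$.

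Next I would conclude $\sigma(V)=\{0\}$. Since $r_\sigma(V)=0$, the spectrum is contained in $\{0\}$; it remains to check that $0\in\sigma(V)$, i.e. that $V$ is not boundedly invertible. This is immediate because $V$ is not surjective: any function in the range of $V$ is (after adjusting by the constant $k$) an absolutely continuous function vanishing at $s=0$, so for instance the constant function $h\equiv 1$ is not in the range of $V$. (Alternatively, on a Banach space every bounded operator has nonempty spectrum, which forces $0\in\sigma(V)$ once we know $\sigma(V)\subseteq\{0\}$.) Either observation finishes the proof.

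I do not expect any serious obstacle here; the only point requiring a little care is the combinatorial bookkeeping in the induction for the kernel of $V^n$ and the Fubini interchange in the norm estimate, both of which are routine since everything is nonnegative and the domain is a finite interval. The statement and its proof are standard (it is the classical Volterra operator rescaled by $k$), and it will be used later only through the fact that adding such a Volterra term does not change the spectral bound.
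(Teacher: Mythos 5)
Your proposal is correct and follows essentially the same route as the paper: an induction giving the explicit kernel of $V^n$, the bound $\|V^n\|\leq (km)^n/n!$, and Gelfand's formula to get $r_\sigma(V)=0$. Your extra remark that $0\in\sigma(V)$ (nonemptiness of the spectrum, or non-surjectivity of $V$) just makes explicit a point the paper leaves implicit, so there is nothing to change.
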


\begin{proof}
By induction, we can show that 
$$V^n h(s)=k^n \int_0^s h(y) \dfrac{(s-y)^{n-1}}{(n-1)!}dy,$$
for every $s\in[0,m]$, $n\geq 0$ and $h\in L^1(0,m)$. We then get
$$\|V^n\|\leq  \dfrac{k^n m^{n}}{n!}.$$
Consequently,
$$r_\sigma(V):=\lim_{n\to \infty}\|V^n\|^{1/n}\leq \lim_{n\to \infty}\dfrac{k m}{(n!)^{1/n}} = 0,$$
since
$$(n!)^{1/n}\approx \dfrac{n}{e}(\sqrt{2\pi n})^{1/n}$$
by Sterling's formula.
\end{proof}
We need also
\begin{lemma}\label{Lemma:Commute}
Let $V_1, V_2: L^1(0,m)\to L^1(0,m)$ two bounded operators. If $V_1V_2=V_2V_1$, then
$$r_\sigma(V_1V_2)\leq r_\sigma(V_1)r_\sigma(V_2).$$	
\end{lemma}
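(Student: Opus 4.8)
The plan is to use the standard fact from the theory of commutative Banach algebras, namely that in a commutative (sub)algebra the spectral radius is submultiplicative: $r_\sigma(xy)\le r_\sigma(x)\,r_\sigma(y)$ whenever $xy=yx$. Here I would not invoke that theorem abstractly but give a short self-contained argument, since $V_1$ and $V_2$ commute by hypothesis. First I would recall the Gelfand spectral radius formula $r_\sigma(T)=\lim_{n\to\infty}\|T^n\|^{1/n}$, valid for any bounded operator $T$ on the Banach space $L^1(0,m)$.

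The key computation is then elementary: since $V_1V_2=V_2V_1$, we have $(V_1V_2)^n=V_1^nV_2^n$ for every $n\ge 1$, hence
$$\|(V_1V_2)^n\|^{1/n}=\|V_1^nV_2^n\|^{1/n}\le \|V_1^n\|^{1/n}\,\|V_2^n\|^{1/n}.$$
Letting $n\to\infty$ and using that both factors on the right converge (to $r_\sigma(V_1)$ and $r_\sigma(V_2)$ respectively), the product of the limits bounds $\limsup$ of the product, so
$$r_\sigma(V_1V_2)=\lim_{n\to\infty}\|(V_1V_2)^n\|^{1/n}\le r_\sigma(V_1)\,r_\sigma(V_2).$$
That is the whole argument. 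No compactness, positivity, or structure of the $L^1$ space is needed — only boundedness, the commutation relation, and submultiplicativity of the operator norm.

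There is essentially no obstacle here: the only point requiring a small amount of care is justifying that $(V_1V_2)^n = V_1^n V_2^n$, which follows by an immediate induction from $V_1V_2=V_2V_1$ (each swap of an adjacent $V_2$ past a $V_1$ is licensed by the hypothesis), and the passage to the limit, where one uses that each of the three sequences $\|V_1^n\|^{1/n}$, $\|V_2^n\|^{1/n}$, $\|(V_1V_2)^n\|^{1/n}$ is convergent by Gelfand's formula, so the inequality between the limits is inherited from the termwise inequality. In the intended application $V_1$ and $V_2$ will typically be (compositions involving) Volterra-type operators as in Lemma \ref{Lemma:Volterra}, one of which has spectral radius zero, so the conclusion will force $r_\sigma(V_1V_2)=0$.
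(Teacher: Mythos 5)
Your proposal is correct and follows essentially the same route as the paper: commutation gives $(V_1V_2)^n=V_1^nV_2^n$, then submultiplicativity of the operator norm and Gelfand's formula yield the inequality in the limit. The only difference is that you spell out the induction and the limit passage, which the paper leaves implicit.
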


\begin{proof}
It is clear that 
\begin{eqnarray*}
r_\sigma(V_1 V_2)&=&\underset{n\to \infty}{\lim}\|(V_1 V_2)^n\|^{1/n}=\underset{n\to \infty}{\lim}\|V_1^n V_2^n\|^{1/n} \\
&\leq& \underset{n\to \infty}{\lim}\|V_1^n\|^{1/n} \|V_2^n\|^{1/n}=r_\sigma(V_1) r_\sigma(V_2),
\end{eqnarray*}
by using Gelfand's formula.
\end{proof}

Note that $\A$ has a compact resolvent (and consequently the
spectrum of $\A$ is composed (at most) of isolated eigenvalues with
finite algebraic multiplicity). This follows from the fact that the
canonical injection $i:(D(A),\Vert .\Vert _{D(A)})\rightarrow (\X,\Vert .\Vert _\X)$ is compact (\cite{Brezis1999} Theorem VIII.7, p. 129), and $D(A)=D(\A)$ since $B\in \L (\X)$ (see e.g. \cite{EngelNagel2000} Proposition II.4.25, p. 117). We are ready to show

\begin{theorem}\label{Thm:SpectBound_inf}
The spectrum of $A+B_1+B_2$ is empty and consequently $s(A+B_1+B_2)=-\infty$.
\end{theorem}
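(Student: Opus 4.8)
The plan is to reduce the statement to the absence of eigenvalues and then invoke uniqueness for a linear Cauchy problem. First I would observe that, since $B_1+B_2\in\L(\X)$, one has $D(A+B_1+B_2)=D(A)$, and by Theorem \ref{Thm:Generation} the operator $A+B_1+B_2$ generates a $C_0$-semigroup, so $\rho(A+B_1+B_2)\neq\emptyset$. Combined with the compactness of the canonical injection $(D(A),\|\cdot\|_{D(A)})\hookrightarrow(\X,\|\cdot\|_\X)$ recalled above, this shows that $A+B_1+B_2$ has a compact resolvent; hence $\sigma(A+B_1+B_2)$ consists of at most isolated eigenvalues of finite algebraic multiplicity, and it is enough to prove that $\lambda I-(A+B_1+B_2)$ is injective on $D(A)$ for every $\lambda\in\C$.

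To do this, I would fix $\lambda\in\C$ and let $(u_1,u_2)\in D(A)$ solve $(\lambda I-A-B_1-B_2)(u_1,u_2)=0$, i.e.
\[
\lambda u_1+(\gamma_1 u_1)'+(\mu+c_1)u_1-c_2u_2=0,\qquad \lambda u_2+(\gamma_2 u_2)'+c_2u_2-c_1u_1=0
\]
a.e.\ on $(0,m)$, with $u_1(0)=u_2(0)=0$. Setting $v_i:=\gamma_i u_i$, the standing hypotheses ($\gamma_i\in W^{1,\infty}(0,m)$, $\gamma_i\geq\gamma_0>0$, $\mu,c_1,c_2\in L^\infty(0,m)$) give $v_i\in W^{1,1}(0,m)\hookrightarrow C([0,m])$ with $v_i(0)=0$ and $u_i=v_i/\gamma_i$, and the system rewrites as a homogeneous linear ODE $w'(s)=M_\lambda(s)w(s)$ a.e., where $w:=(v_1,v_2)^T$ and $M_\lambda\in L^\infty\big((0,m),\C^{2\times2}\big)$ has entries $-(\lambda+\mu+c_1)/\gamma_1$, $c_2/\gamma_2$, $c_1/\gamma_1$ and $-(\lambda+c_2)/\gamma_2$. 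Since $w$ is absolutely continuous with $w(0)=0$, this yields $|w(s)|\leq\|M_\lambda\|_{L^\infty}\int_0^s|w(y)|\,dy$ on $[0,m]$, and Gronwall's lemma forces $w\equiv0$, hence $u_1=u_2=0$. As $\lambda\in\C$ was arbitrary, $A+B_1+B_2$ has no eigenvalue, so $\sigma(A+B_1+B_2)=\emptyset$ and $s(A+B_1+B_2)=-\infty$.

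I do not expect a genuine obstacle here: the two points requiring a little care are the justification that $A+B_1+B_2$ has a compact resolvent (nonemptiness of its resolvent set, which follows from semigroup generation, together with the fact that the bounded perturbation $B_1+B_2$ leaves the domain unchanged) and the routine regularity bookkeeping when passing to the ODE system for $(v_1,v_2)$. The vanishing of the solutions of that system reflects the Volterra (quasi-nilpotent) structure of the transport dynamics already visible in Lemma \ref{Lemma:Resolv}. As an alternative that avoids the Fredholm step, one could instead solve the full resolvent equation $(\lambda I-A-B_1-B_2)U=H$ directly via the fundamental matrix of the same $2\times2$ system, obtaining an explicit, bounded inverse for every $\lambda\in\C$ in the spirit of Lemma \ref{Lemma:Resolv}.
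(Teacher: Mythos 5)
Your argument is correct, but it follows a genuinely different route from the paper. You reduce the claim to the absence of eigenvalues via the compact-resolvent (Fredholm) structure of $A+B_1+B_2$ --- legitimate here, since $\rho(A+B_1+B_2)\neq\emptyset$ by Theorem \ref{Thm:Generation}, $D(A+B_1+B_2)=D(A)$, the graph norm controls the $W^{1,1}\times W^{1,1}$ norm because $\gamma_i\geq\gamma_0>0$ and $\gamma_i\in W^{1,\infty}$, and $W^{1,1}(0,m)\hookrightarrow L^1(0,m)$ is compact for $m<\infty$ --- and then kill every putative eigenfunction by rewriting the system for $v_i=\gamma_iu_i$ as a homogeneous linear ODE with $L^\infty$ coefficients and zero initial data and invoking Gronwall. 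The paper instead never discusses eigenfunctions: it dominates $B_2(\lambda I-A)^{-1}$ on the positive cone by $\tilde{B_2}\,\mathrm{diag}(V_1,V_2)$ with $V_1,V_2$ Volterra operators (see \eqref{Eq:Maj_Resolv}), uses the quasi-nilpotency of Volterra operators (Lemma \ref{Lemma:Volterra}) and the commutation Lemma \ref{Lemma:Commute} to get $r_\sigma(B_2(\lambda I-A)^{-1})=0$ for every real $\lambda$, concludes $s(A+B_2)=-\infty$ from Voigt's formula \eqref{Eq:Voigt} together with $s(A)=-\infty$ (Lemma \ref{Lemma:Resolv}), and finally uses $B_1\leq 0$ to get $s(A+B_1+B_2)\leq s(A+B_2)=-\infty$. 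Your proof is more elementary and self-contained (no Voigt formula, no positivity or spectral-radius machinery), and it makes transparent where finiteness of $m$ enters (compact embedding plus Gronwall on a bounded interval), which is consistent with the failure of the statement when $m=\infty$ (Theorem \ref{Thm:Spec_Dtilde}). The paper's positivity/Volterra-domination argument has the advantage of being exactly the toolkit that is re-used in the converse part of Theorem \ref{Thm:SpectBound}, where one must bound $s(\A)$ itself and no ODE uniqueness trick is available because of the nonlocal operator $K$; also, your final equality $u_i=v_i/\gamma_i$ and the product rule for $v_i=\gamma_iu_i$ are justified precisely because $1/\gamma_i,\gamma_i\in W^{1,\infty}(0,m)$, a point worth stating explicitly but not a gap.
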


\begin{proof}
Let $\lambda>-\infty$ and define the operators
\begin{equation}\label{Eq:Op_Ai}
A^i_0u=-(\gamma_i u)', \quad \forall i\in\{1,2\}
\end{equation}
for every $u\in D(A^1_0)=D(A^2_0):=\{u\in W^{1,1}(0,m):u(0)=0\}$. Thus, using Lemma \ref{Lemma:Resolv}, we get
\begin{equation}\label{Eq:Maj_Resolv}
(\lambda I-A^i_0)^{-1}h(s)\leq k_i \int_0^s h(y)dy=:V_ih(s), \ \forall s\in[0,m], \ \forall i\in \{1,2\}, \ \forall h\in L^1_+(0,m)
\end{equation}
where $k_1$ and $k_2$ are some positive constants and $V_1$, $V_2$ are Volterra operators. We see that
$$B_2(\lambda I-A)^{-1}h\leq \tilde{B_2}\begin{pmatrix}
V_1 & 0\\
0 & V_2
\end{pmatrix}h, \quad \forall h\in \X_+,$$
since $A$ is resolvent positive, where 
\begin{equation}\label{Eq:Op_B2tilde} \tilde{B_2}\begin{pmatrix}
h_1 \\
h_2
\end{pmatrix}=\begin{pmatrix}
\|c_2\|_{L^\infty}h_2 \\
\|c_1\|_{L^\infty}h_1
\end{pmatrix}, \quad \forall (h_1,h_2)^T \in \X_+
\end{equation}
is a positive operator. The fact that $\tilde{B_2}$ and $(V_1,V_2)^T$ commute implies that
$$r_\sigma(\tilde{B_2}(\lambda I-A)^{-1})\leq r_\sigma(\tilde{B_2}) r_\sigma\begin{pmatrix}
V_1 & 0 \\
0 & V_2
\end{pmatrix}$$
using Lemma \ref{Lemma:Commute}. Since $V_1$ and $V_2$ are Volterra operators, then
$$r_\sigma\begin{pmatrix}
V_1 & 0 \\
0 & V_2
\end{pmatrix}=\max\{r_\sigma(V_1),r_\sigma(V_2)\}=0.$$
Consequently, we have
$$r_\sigma(B_2(\lambda I-A)^{-1})\leq r_\sigma(\tilde{B_2}(\lambda I-A)^{-1})=0$$
for every $\lambda>-\infty$ and 
$$s(A+B_2)=s(A)=-\infty$$ 
by using \eqref{Eq:Voigt} and Lemma \ref{Lemma:Resolv}. Finally, since $B_1\leq 0$, then we get
$$s(A+B_1+B_2)\leq s(A+B_2)=-\infty,$$
which ends the proof.
\end{proof}

On the other hand, $\sigma(\A)$ need not be empty. Indeed:

\begin{theorem}\label{Thm:SpectBound}
The spectrum of $\A$ is not empty, or equivalently, $s(\A)>-\infty$ if and only if
\begin{equation}\label{Eq:H1_bis}
\exists \ \delta \in(0,m) : \int_0^{\delta}\int_{\delta}^m \beta(s,y)dyds>0.
\end{equation}
\end{theorem}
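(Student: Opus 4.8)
The plan is to prove both implications by exploiting the Voigt formula \eqref{Eq:Voigt} together with Theorem \ref{Thm:SpectBound_inf}, which tells us that $s(A+B_1+B_2)=-\infty$. Since $\A=(A+B_1+B_2)+B_3$ is a positive perturbation of a resolvent positive operator, \eqref{Eq:Voigt} gives
$$s(\A)=\inf\{\lambda>-\infty : r_\sigma(B_3(\lambda I-(A+B_1+B_2))^{-1})<1\},$$
so $\sigma(\A)=\emptyset$ (i.e. $s(\A)=-\infty$) if and only if $r_\sigma(B_3(\lambda I-(A+B_1+B_2))^{-1})=0$ for every $\lambda\in\R$. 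Thus everything reduces to understanding when the spectral radius of the one-step operator $B_3(\lambda I-(A+B_1+B_2))^{-1}$ vanishes, and one should compare it with the corresponding operator built from $A$ alone.

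First I would reduce to a scalar Volterra-type operator acting on the first component. Because $B_1\le 0$ and $A+B_1+B_2$ is resolvent positive, one has the domination $(\lambda I-(A+B_1+B_2))^{-1}\le(\lambda I-(A+B_2))^{-1}$ componentwise, and then, following the argument of Theorem \ref{Thm:SpectBound_inf}, $(\lambda I-(A+B_2))^{-1}$ is dominated by the operator $\bigl(\begin{smallmatrix}V_1 & 0\\ 0 & V_2\end{smallmatrix}\bigr)\sum_l(\tilde B_2\bigl(\begin{smallmatrix}V_1 & 0\\ 0 & V_2\end{smallmatrix}\bigr))^l$ where $V_1,V_2$ are Volterra operators. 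Since $B_3$ only reads off the first component and writes into the first component via $K$, the operator $B_3(\lambda I-(A+B_1+B_2))^{-1}$ is dominated by $KW$ for a suitable positive integral operator $W$ on $L^1(0,m)$ whose kernel is supported on $\{y\le s\}$ (a ``lower-triangular'' kernel coming from the Volterra structure). Hence $r_\sigma(B_3(\lambda I-(A+B_1+B_2))^{-1})\le r_\sigma(KW)$. The key structural point is that all these operators have integral kernels supported in $\{y\le s\}$, so powers of $KW$ have kernels that can only be nonzero on tuples $s_0\ge s_1\ge\cdots$; this is what forces $r_\sigma=0$ in the degenerate case.

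For the ``only if'' direction I would argue by contraposition: assume \eqref{Eq:H1_bis} fails, i.e. for every $\delta\in(0,m)$, $\int_0^\delta\int_\delta^m\beta(s,y)\,dy\,ds=0$, which means $\beta(s,y)=0$ for a.e. $s<y$. Then $K$ itself has a kernel supported in $\{y\le s\}$, so $KW$ has a lower-triangular kernel. A lower-triangular integral operator on $L^1(0,m)$ with an $L^\infty$-bounded kernel (boundedness of $K$ is guaranteed by hypothesis and Remark \ref{Rem:Bounded}, and $W$ is bounded) is quasi-nilpotent: its $n$-th power has kernel supported on the simplex $s\ge y_1\ge\cdots\ge y_{n-1}\ge y$, whose volume is $\le C^n m^n/n!$, exactly as in Lemma \ref{Lemma:Volterra}. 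Hence $r_\sigma(KW)=0$ for all $\lambda$, so $r_\sigma(B_3(\lambda I-(A+B_1+B_2))^{-1})=0$ for all $\lambda\in\R$, and by \eqref{Eq:Voigt} we conclude $s(\A)=-\infty$, i.e. $\sigma(\A)=\emptyset$. (If one prefers, the same conclusion follows by noting that then $\beta(s,y)=0$ a.e.\ on $\{s<y\}$ implies $\Y_\delta=L^1(\delta,m)\times L^1(\delta,m)$ is invariant under the resolvent for every $\delta$, and that the resolvent is quasi-nilpotent on the complementary pieces.)

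For the ``if'' direction, assume \eqref{Eq:H1_bis}: there is $\delta\in(0,m)$ with $\int_0^\delta\int_\delta^m\beta(s,y)\,dy\,ds>0$. I need to show $r_\sigma(B_3(\lambda I-(A+B_1+B_2))^{-1})\ge1$ for $\lambda$ sufficiently negative, or more directly that $s(\A)>-\infty$. The cleanest route is to produce, for $\lambda$ very negative, a nonnegative function on which $B_3(\lambda I-(A+B_1+B_2))^{-1}$ acts with a large factor — intuitively, as $\lambda\to-\infty$ the resolvent $(\lambda I - A)^{-1}$ applied to a function supported near size $\del$ produces, via \eqref{Eq:Range}, a function whose mass on $[\delta,m]$ is amplified like $\tfrac{1}{|\lambda|}e^{c|\lambda|}$ for a positive constant $c$ depending on the growth rates, and then $K$ maps this to something with nontrivial mass on $[0,\delta]$ by the condition $\int_0^\delta\int_\delta^m\beta>0$. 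Iterating this two-step cycle (transport up past $\delta$, then reproduce back below $\delta$) shows the $2n$-th power of the one-step operator has norm $\ge(\text{const})^n e^{cn|\lambda|}$ along a suitable test function, so its spectral radius exceeds $1$ once $|\lambda|$ is large. By \eqref{Eq:Voigt} this yields $s(\A)>-\infty$. I expect this last estimate — making the exponential amplification from the Volterra resolvent rigorous and combining it with the nondegeneracy of $\beta$ across $\delta$ — to be the main obstacle; the rest is bookkeeping with positive operators and the Voigt formula.
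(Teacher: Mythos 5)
Your overall skeleton (reduce everything, via Voigt's formula \eqref{Eq:Voigt} and $s(A+B_1+B_2)=-\infty$ from Theorem \ref{Thm:SpectBound_inf}, to the size of $r_\sigma(B_3(\lambda-(A+B_1+B_2))^{-1})$) is legitimate, but the ``if'' direction — the heart of the theorem — is left as a sketch, and as sketched it does not close. The exponential amplification you invoke concerns the total mass on $[\delta,m]$ of $w$, the first component of $(\lambda-(A+B_1+B_2))^{-1}v$; but what $B_3$ returns below $\delta$ is $\int_\delta^m g(y)w(y)\,dy$ with $g(y):=\int_0^\delta\beta(s,y)\,ds$, and \eqref{Eq:H1_bis} only gives $\int_\delta^m g>0$. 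If $g$ is essentially concentrated near $y=\delta$, where the resolvent kernel provides no gain, the per-cycle factor is merely a fixed constant of order $\|\gamma_1\|_{L^\infty}^{-1}e^{-(\|\mu\|_{L^\infty}+\|c_1\|_{L^\infty})m/\gamma_0}\int_0^\delta\int_\delta^m\beta$, which may well be $<1$, and then the iteration proves nothing. The missing ingredient is to choose $\delta_2\in(\delta,m)$ with $\int_0^\delta\int_{\delta_2}^m\beta(s,y)\,dy\,ds>0$ (possible by monotone convergence; the paper does exactly this at the start of its proof), so that the return integral draws from $y\ge\delta_2$, where the kernel of $(\lambda-A^1_{\mu+c_1})^{-1}$ is bounded below by $Ce^{|\lambda|(\delta_2-\delta)/\|\gamma_1\|_{L^\infty}}$; with that, a one-cycle bound uniform over nonnegative inputs supported in $[0,\delta]$ gives $r_\sigma(B_3(\lambda-(A+B_1+B_2))^{-1})\to\infty$ as $\lambda\to-\infty$, and then — using also the monotonicity in $\lambda$ of this spectral radius, which your argument needs and does not state — \eqref{Eq:Voigt} yields $s(\A)>-\infty$. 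This would be a genuinely different route from the paper, which instead proves $r_\sigma((\lambda-\A)^{-1})>0$ directly via compactness and de Pagter's theorem on $L^1(\delta,\delta_2)$ together with $r_\sigma((\lambda-\A)^{-1})=(\lambda-s(\A))^{-1}$; but as you yourself acknowledge, the decisive estimate is precisely what you have not carried out, and your sketch omits the $\delta_2$-refinement without which it fails.

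In the ``only if'' direction there is a second, more localized flaw: the step ``a lower-triangular integral operator on $L^1(0,m)$ with an $L^\infty$-bounded kernel is quasi-nilpotent'' is applied to $KW$, but nothing in the hypotheses bounds the kernel of $K$ — the standing assumption and Remark \ref{Rem:Bounded} only give the column bound $k_\beta=\sup_y\int_0^m\beta(s,y)\,ds<\infty$ of \eqref{Eq:k_beta} — so the kernel of $KW$, controlled only by the row integrals $\int_0^m\beta(s,z)\,dz$, need not be bounded (nor even finite). Moreover triangularity plus boundedness on $L^1$ alone does not force quasi-nilpotency: the positive operator $f\mapsto\int_y^1 s^{-1}f(s)\,ds$ has a triangular kernel, norm $1$ on $L^1(0,1)$, and its adjoint on $L^\infty$ is the Ces\`aro averaging operator, which fixes the constant function $1$, so its spectral radius is $\ge 1$. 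The repair is cheap — use $r_\sigma(KW)=r_\sigma(WK)$ and note that $WK$ has a triangular kernel bounded by $k_1k_\beta$, or run the paper's induction, which only ever uses the column bound $k_\beta$ — but as written the step is unjustified exactly where the distinction ``$K$ bounded on $L^1$'' versus ``$\beta$ bounded'' matters. (Minor: by \eqref{Eq:Voigt}, $s(\A)=-\infty$ is equivalent to $r_\sigma(B_3(\lambda-(A+B_1+B_2))^{-1})<1$ for all real $\lambda$, not to its vanishing; this is harmless since in the degenerate case you do prove it is $0$.)
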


\begin{proof}
\begin{enumerate}
\item Suppose that \eqref{Eq:H1_bis} is satisfied. By continuity argument, we can find $\delta_2\in(\delta,m)$ such that
\begin{equation}\label{Eq:H1_int}
\int_0^{\delta}\int_{\delta_2}^m \beta(s,y)dyds>0.
\end{equation}
Let $\lambda>s(\A)$ then
$$(\lambda-\A)^{-1}\geq(\lambda-(A+B_1+B_3))^{-1}=\begin{pmatrix}
(\lambda-(A^1_{\mu+c_1}+K))^{-1}\\
(\lambda-A^2_{c_2})^{-1}
\end{pmatrix}$$
since $B_2\geq 0$, where $A^1_{\mu+c_1}$ and $A^2_{c_2}$ are defined by
\begin{equation}\label{Eq:Op_AiTilde}
A^1_{\mu+c_1}u=-(\gamma_1 u)'-(\mu+c_1)u, \qquad A^2_{c_2}u=-(\gamma_2 u)'-c_2 u,
\end{equation}
and $D(A^1_{\mu+c_1})=D(A^2_{c_2})=D(A^1_0)$. Thus, we have
$$r_\sigma \left((\lambda-\A)^{-1}\right)\geq\max\left\{r_\sigma \left((\lambda-(A^1_{\mu+c_1}+K))^{-1}\right), r_\sigma\left((\lambda-A^2_{c_2})^{-1}\right)
\right\}.$$
It then suffices to show that
$$r_\sigma\left((\lambda-(A^1_{\mu+c_1}+K))^{-1}\right)>0.$$
First, we see that
$$(\lambda-(A^1_{\mu+c_1}+K))^{-1}\geq \left((\lambda+\|\mu\|_{L^\infty}+\|c_1\|_{L^\infty})I-(A^1_0+K)\right)^{-1},$$
so we just need to prove that for $\lambda$ large enough we have
$$r_\sigma\left((\lambda-(A^1_0+K))^{-1}\right)>0.$$
By \eqref{Eq:Voigt_Sum}, we know that
$$(\lambda-(A^1_0+K))^{-1}\geq (\lambda-A^1_0)^{-1}K(\lambda-A^1_0)^{-1}.$$
Let $v\in L^1(\delta, \delta_2)$, then using Lemma \ref{Lemma:Resolv}, we get
$$(\lambda-A^1_0)^{-1}v=:v_1,$$
where $v_1(s)>0$ for every $s\in(\inf \supp (v),m]$. In particular, we have
$$v_1(s)>0, \quad\forall s\in[\delta_2,m]$$
since $\inf \supp (v) \leq \delta_2$. Therefore we have
$$K(\lambda-A^1_0)^{-1}v=Kv_1=:v_2,$$
where $\inf \supp (v_2)\leq \delta$. Indeed, suppose by contradiction that
$$\inf \supp (v_2)>\delta,$$
then $v_2\equiv 0$ on $[0,\delta]$. We would have
$$\int_{\delta_2}^m \beta(s,y)v_1(y)dy\leq \int_0^m \beta(s,y)v_1(y)dy=v_2(s)=0, \quad \text{a.e.} \quad s\in [0,\delta],$$
and
$$\beta(s,y)=0, \qquad \text{a.e.} \quad s\in[\delta_2,m], \ y\geq \delta_2$$
since $v_1(s)>0$ for every $s\in[\delta_2,m]$, which contradicts \eqref{Eq:H1_int}. Define the function 
$$v_3:=(\lambda-A^1_0)^{-1}K(\lambda-A^1_0)^{-1}v=(\lambda-A^1_0)^{-1}v_2,$$
that satisfies
$$v_3(s)>0, \quad \forall s\in[\inf \supp (v_2),m]$$
by Lemma \ref{Lemma:Resolv}. In particular we have $v_3(s)>0$ for every $s\in[\delta, \delta_2]$. It implies that
\begin{equation}\label{Eq:Irr}
(\lambda-(A^1_0+K))^{-1}v(s)>0, \quad \forall s\in[\delta, \delta_2], \quad \forall v\in L^1(\delta,\delta_2),
\end{equation}
for $\lambda$ large enough. We also know that
$$(\lambda-(A^1_0+K))^{-1}\geq (\lambda-(A^1_0+K))^{-1}_{\mid L^1(\delta,\delta_2)}\geq \chi_{[\delta,\delta_2]}(\lambda-(A^1_0+K))^{-1}_{\mid L^1(\delta,\delta_2)},$$
where $\chi_{[\delta,\delta_2]}$ is the indicator function of $[\delta,\delta_2]$, so
$$r_\sigma \left((\lambda-(A^1_0+K))^{-1}\right)\geq r_\sigma\left(\chi_{[\delta,\delta_2]}(\lambda-(A^1_0+K))^{-1}_{\mid L^1(\delta,\delta_2)}\right).$$
Using \eqref{Eq:Irr} and the fact that $\A$ is resolvent compact, then the operator
$$\chi_{[\delta,\delta_2]}(\lambda-(A^1_0+K))^{-1}_{\mid L^1(\delta,\delta_2)}:L^1(\delta,\delta_2)\to L^1(\delta,\delta_2)$$
is compact and positivity improving. Consequently
$$r_\sigma\left(\chi_{[\delta,\delta_2]}(\lambda-(A^1_0+K))^{-1}_{\mid L^1(\delta,\delta_2)}\right)>0$$
(see \cite{Pagter86} Theorem 3) and
$$r_\sigma\left((\lambda-\A)^{-1}\right)>0.$$
Moreover, we know that
$$r_\sigma\left((\lambda-\A)^{-1}\right)=\dfrac{1}{\lambda-s\left(\A\right)}$$
(see \cite{Nagel86} Proposition 2.5, p. 67), so we get $s(\A)>-\infty$.

\item Now to prove the converse, we use the contraposition. Suppose that the assumption \eqref{Eq:H1_bis} is not satisfied, that is
\begin{equation}\label{Eq:Assump-Contrap}
\forall \ \delta \in(0,m): \int_0^\delta \int_\delta^m \beta(s,y)dyds=0
\end{equation}
i.e. 
$$\beta(s,y)=0, \qquad \text{a.e.} \quad s<y.$$
Suppose \textit{momentarily} that there exists a Volterra operator $V$ in $L^1(0,m)$ such that
\begin{equation}\label{Eq:Bounded_Volt}
(\lambda I-(A^1_0+K))^{-1}h(s)\leq Vh(s), \quad \forall s\in[0,m], \quad \forall h\in L^1_+(0,m),
\end{equation}
for every $\lambda>-\infty$, where $A^1_0$ is given by \eqref{Eq:Op_Ai}. We would have
$$r_\sigma\left((\lambda-(A^1_0+K))^{-1}\right)\leq r_\sigma(V)=0$$
and then
$$r_\sigma\left((\lambda I-(A+B_3))^{-1}\right)=r_\sigma\begin{pmatrix}
(\lambda I-(A^1_0+K))^{-1} \\
(\lambda I-A^2_0)^{-1}
\end{pmatrix}=0$$
since
$$r_\sigma\left((\lambda I-A^2_0)^{-1}\right)\leq r_\sigma\left((\lambda I-A)^{-1}\right)=0.$$
Consequently we have
$$s(A+B_3)=-\infty.$$
By assumption, we know that
$$B_2(\lambda I-(A+B_3))^{-1}\leq \tilde{B_2}\begin{pmatrix}
V & 0 \\
0 & V_2
\end{pmatrix},$$
where $V_2$ and $\tilde{B_2}$ are respectively defined by  \eqref{Eq:Maj_Resolv} and \eqref{Eq:Op_B2tilde}. The fact that $\tilde{B_2}$ and $(V,V_2)^T$ commute implies that
$$r_\sigma\left(\tilde{B_2}(\lambda I-(A+B_2))^{-1}\right)\leq r_\sigma(\tilde{B_2})r_\sigma\begin{pmatrix}
V & 0 \\
0 & V_2
\end{pmatrix}=0$$
using Lemma \ref{Lemma:Commute} and since $V$ and $V_2$ are Volterra operators.
Consequently, we have
$$r_\sigma(B_2(\lambda I-(A+B_2))^{-1})\leq r_\sigma(\tilde{B_2}(\lambda I-(A+B_2))^{-1})=0$$
for every $\lambda>-\infty$ and
$$s(A+B_2+B_3)=-\infty$$
by using \eqref{Eq:Voigt}. Finally we have
$$s(\A)\leq s(A+B_2+B_3)=-\infty$$
since $B_1\leq 0$.

Consequently it remains to prove \eqref{Eq:Bounded_Volt}. First, we know that
$$(\lambda-(A^1_0+K))^{-1}=(\lambda-A^1_0)^{-1}\sum_{n=0}^\infty (K(\lambda-A^1_0)^{-1})^n,$$
using \eqref{Eq:Voigt_Sum} for $\lambda$ large enough. Let $v\in L^1_+(0,m)$, then we have
\begin{equation*}
\begin{array}{rcl}
K(\lambda-A^1_0)^{-1}v(s)&\leq& k_1 \displaystyle \int_0^m \beta(s,y)\int_0^y v(z)dzdy \vspace{0.1cm}\\
&=& k_1 \displaystyle \int_0^s v(z)\int_z^s \beta(s,y)dydz, \quad \forall s\in[0,m],
\end{array}
\end{equation*}
using \eqref{Eq:Assump-Contrap}, where $k_1$ is defined in \eqref{Eq:Maj_Resolv}. We then get
\begin{equation*}
\begin{array}{rcl}
(\lambda-A^1_0)^{-1}K(\lambda-A^1_0)^{-1}v(s)&\leq& k_1^2 \displaystyle \int_0^s \int_0^y v(z) \int_z^y \beta(y,\xi)d\xi dz dy \vspace{0.1cm}\\
&\leq& k_1^2 k_\beta \displaystyle \int_0^s v(z) (s-z)dz, \quad \forall s\in[0,m],
\end{array}
\end{equation*}
where
\begin{equation}
\label{Eq:k_beta}
k_\beta=\sup_{y\in[0,m]}\int_0^m \beta(z,y)dz
\end{equation}
and
\begin{equation*}
\begin{array}{rcl}
(K(\lambda-A^1_0)^{-1})^2 v(s) \leq k_1^2 k_\beta \displaystyle \int_0^s \beta(s,y) \int_0^y v(z) (y-z)dzdy.
\end{array}
\end{equation*}
We then show by induction that
\begin{equation*}
\begin{array}{rcl}
(K(\lambda-A^1_0)^{-1})^n v(s) &\leq& k_1^n k_\beta^{n-1} \displaystyle \int_0^s \beta(s,y) \int_0^y v(z) \dfrac{(y-z)^{n-1}}{(n-1)!}dzdy, \\
&\leq& k_1\dfrac{(k_1 k_\beta m)^{n-1}}{(n-1)!}\displaystyle \int_0^s \beta(s,y) \int_0^y v(z) dzdy
\end{array}
\end{equation*}
for every $s\in[0,m]$ and every $n\geq 0$. Consequently, we get
\begin{equation*}
\begin{array}{rcl}
\underset{n\geq 1}{\sum}(K(\lambda-A^1_0)^{-1})^n v(s) \leq k_1 e^{k_1 k_\beta m} \displaystyle \int_0^s \beta(s,y) \int_0^y v(z) dzdy,
\end{array}
\end{equation*}
and then
\begin{equation*}
\begin{array}{rcl}
(\lambda-(A^1_0+K))^{-1}v(s) &\leq& k_1(1+m k_1 k_\beta e^{k_1 k_\beta m}) \displaystyle \int_0^s v(y)dy \\
&\leq& C \displaystyle \int_0^s v(y)dy=:Vv(s), \quad \forall s\in[0,m],
\end{array}
\end{equation*}
where $C>0$, for every $v\in L^1_+(0,m)$, which proves \eqref{Eq:Bounded_Volt}.
\end{enumerate}
\end{proof}

Note that Assumption \eqref{Eq:H1_bis} which characterizes that $s(A)>-\infty$ is much weaker than the assumptions in Theorem \ref{Thm:Irr} which characterize the irreducibility of the semigroup. Moreover, Theorem \ref{Thm:SpectBound} provides us with the existence of a real leading eigenvalue since $s(\A)\in \sigma(\A)$ (see e.g. \cite{Clement87} Theorem 8.7, p. 202). In \cite{Farkas2010}, the spectral gap is obtained under the assumption
\begin{equation}\label{Hyp:Gap_Farkas}
\beta \in \mathcal{C}([0,m]^2), \qquad \exists \ 0\leq s^*<y^*\leq m: \beta(s^*,y^*)>0.
\end{equation}
It is clear that \eqref{Hyp:Gap_Farkas} implies that \eqref{Eq:H1_bis} is satisfied.

\subsection{On asynchronous exponential growth}

Let us remind some definitions and results about \textit{asynchronous
exponential growth} (see \cite{EngelNagel2000}, \cite{Nagel86} and \cite{Webb87} for the details).

\begin{definition}\label{Def:Ess}
Let $\L (\X)$ be the space of bounded linear operators on $\X$ and let $\mathcal{K}(\X)$ be the subspace of compact
operators on $\mathcal{\X}$. The essential norm $\|L\|_{\ess}$ of $L\in \L (\X)$ is given by 
\begin{equation*}
\|L\|_{\ess}=\underset{K\in \mathcal{K}(\X)}{%
\inf }\|L-K\|_\X
\end{equation*}
(see e.g. \cite{EngelNagel2000} p. 249). Let $\{T(t)\}_{t\geq 0} $ be a $C_{0}$-semigroup on $\X$
with generator $\A:D(\A)\subset \X\rightarrow \X$. The
growth bound (or type) of $\{T(t)\}_{t\geq 0} $ is given by 
\begin{equation*}
\omega _{0}(\A)=\underset{t\rightarrow \infty }{\lim }\dfrac{\ln (\Vert
T(t)\Vert _{\X})}{t},
\end{equation*}
and the essential growth bound (or essential type) of $\{T(t)\}_{t\geq 0}$ is given by 
\begin{equation*}
\omega_{\ess}(\A)=\underset{t\rightarrow \infty }{\lim }\dfrac{\ln (\|T(t)\|_{\ess})}{t}.
\end{equation*}
\end{definition}

\begin{definition}[Asynchronous Exponential Growth]
\cite[Definition 2.2]{Webb87} \newline \label{Def:AEG}
Let $\{T(t)\}_{t\geq 0}$ be a $C_{0}$-semigroup with infinitesimal generator 
$\A$ in the Banach space $\X$. We say that $\{T(t)\}_{t\geq 0}$ has
asynchronous exponential growth with intrinsic growth constant $\lambda
_{0}\in \mathbb{R}$ if there exists a nonzero finite rank projection $P_{0}$
in $\X$ such that $\lim_{t\rightarrow \infty }e^{-\lambda _{0}t}T(t)=P_{0}$.
\end{definition}
We recall the following standard result (see e.g. \cite{Clement87} Theorem 9.11, p. 224).
\begin{theorem}
\label{Thm:StandardAsync}Let $X$ be a Banach lattice and let $\{T(t)\}_{t\geq 0}$ be a positive $C_{0}$-semigroup on $\X\ $ with
infinitesimal generator $\A$. If $\{T(t)\}_{t\geq 0}$ is irreducible and if 
\begin{equation*}
\omega_{\ess}(\A)<\omega_{0}(\A)
\end{equation*}
then $\{T(t)\}_{t\geq 0}$ has asynchronous exponential growth with intrinsic
growth constant $\lambda _{0}=\omega_{0}(\A)$ and spectral projection $P_0$ of rank one.
\end{theorem}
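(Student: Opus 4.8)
The plan is to combine the spectral splitting forced by the gap $\omega_{\ess}(\A)<\omega_0(\A)$ with the Perron--Frobenius structure produced by positivity and irreducibility.

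\emph{Step 1 (consequences of the gap).} First I would invoke the general spectral theory of $C_0$-semigroups (see \cite{EngelNagel2000}, \cite{Nagel86}): since $\omega_{\ess}(\A)<\omega_0(\A)$, for every $\omega\in(\omega_{\ess}(\A),\omega_0(\A)]$ the set $\{\lambda\in\sigma(\A):\Re\lambda\geq\omega\}$ is finite and consists of poles of the resolvent of finite algebraic multiplicity, and the spectral mapping property holds for the part of $\sigma(T(t))$ outside the essential spectral radius. It follows that $\omega_0(\A)=s(\A)$, that $s(\A)\in\sigma(\A)$ (this also follows from positivity), and that the peripheral spectrum
\begin{equation*}
\sigma_b:=\sigma(\A)\cap\{\lambda\in\C:\Re\lambda=s(\A)\}
\end{equation*}
is a \emph{finite} set of eigenvalues of finite algebraic multiplicity.

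\emph{Step 2 (Perron--Frobenius).} Next I would use positivity together with irreducibility. By the Perron--Frobenius theory for positive irreducible semigroups on Banach lattices (see \cite{Nagel86}, C-III, and \cite{Clement87}), the pole $s(\A)$ is \emph{first order}, its residue $P_0$ is a rank-one projection $P_0 f=\langle f,\phi^{*}\rangle\,\phi$ with $\phi$ a quasi-interior point of $\X_+$ and $\phi^{*}$ a strictly positive functional, and the peripheral spectrum $\sigma_b$ is additively cyclic, i.e. $\sigma_b=s(\A)+i\alpha\,\mathbb{Z}$ for some $\alpha\geq 0$. Since Step~1 shows $\sigma_b$ is finite, necessarily $\alpha=0$, hence $\sigma_b=\{s(\A)\}$.

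\emph{Step 3 (splitting).} Finally, since $s(\A)$ is a first-order pole, $\X=P_0\X\oplus(I-P_0)\X$ with both summands invariant under $\{T(t)\}_{t\geq0}$; as $P_0\X=\ker(s(\A)-\A)$ is one-dimensional, $T(t)P_0=e^{s(\A)t}P_0$. The restriction of the semigroup to $(I-P_0)\X$ has generator with spectrum $\sigma(\A)\setminus\{s(\A)\}$: the peripheral point has been removed, only finitely many further points have real part exceeding $\omega_{\ess}(\A)$ and they all have real part $<s(\A)$, while its essential type is still $\leq\omega_{\ess}(\A)<s(\A)$. Hence the growth bound of the restricted semigroup is $<s(\A)$, so $\|e^{-s(\A)t}T(t)(I-P_0)\|_{\L(\X)}\to 0$, and combining with $T(t)P_0=e^{s(\A)t}P_0$ we get $\|e^{-s(\A)t}T(t)-P_0\|_{\L(\X)}\to0$. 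This is exactly asynchronous exponential growth with intrinsic growth constant $\lambda_0=\omega_0(\A)=s(\A)$ and rank-one spectral projection $P_0$.

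The hard part is Step~2, and within it the reduction $\sigma_b=\{s(\A)\}$: this is the genuinely Perron--Frobenius content and it is where irreducibility is indispensable---without it $P_0$ need not be one-dimensional and extra peripheral eigenvalues may persist (precisely the phenomenon analysed later in the paper in the non-irreducible case). The remaining steps are bookkeeping: the identification $\omega_0(\A)=s(\A)$ under a gap, and the monotonicity of $\omega_{\ess}$ under restriction to a complemented invariant subspace (compressing a compact operator by the bounded projection $I-P_0$ keeps it compact).
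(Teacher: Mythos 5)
Your proposal is correct. The paper itself does not prove this statement: it is recalled as a standard result with a reference to \cite{Clement87} (Theorem 9.11, p.~224), and your argument is precisely the canonical proof found there and in \cite{Nagel86} (C-III) and \cite{Webb87}: the essential-type gap yields finitely many peripheral eigenvalues which are poles of finite algebraic multiplicity and $\omega_0(\A)=s(\A)$; irreducibility then makes $s(\A)$ a first-order pole with rank-one residue and, via the cyclicity theorem (whose hypothesis that the peripheral spectral values are poles is exactly what Step 1 supplies), collapses the boundary spectrum to $\{s(\A)\}$; the spectral decomposition along $P_0$ finishes the argument, with the observation that the essential type does not increase under restriction to the complemented invariant subspace. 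So your write-up matches, in substance and in order, the proof the paper delegates to the literature.
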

Now, we need to introduce the following assumption.
\begin{assumption}\label{Assump1}
The integral operator $K$ is weakly compact.
\end{assumption}
\begin{remark}\label{Rem:Weak-Compact}
According to the general criterion of weak compactness (see e.g. Section 4 in \cite{Weis88}), $K$ is weakly compact if and only if
$$\lim_{|E|\to 0}\sup_{y\in [0,m]} \int_E \beta(s,y)ds=0$$
and (additionally for $m=\infty$)
$$\lim_{c\to \infty}\sup_{y\in(0,\infty)}\int_c^\infty \beta(s,y)ds=0.$$
This is satisfied \textit{e.g.} if there exists $\hat{\beta}\in L^1(0,m)$ such that $\beta(s,y)\leq \hat{\beta}(s)$. In particular, this is the case if $m<\infty$ and $\beta$ is continuous on $[0,m]^2$ which occurs in \cite{Farkas2010}.
\end{remark}
We are ready to give the main result of this subsection.

\begin{theorem}\label{Thm:Asynch} 	
Suppose that Assumption \ref{Assump1} holds. The semigroup $\{T_{\A}(t)\}_{t\geq 0}$ has a spectral gap if and only if 
Assumption \eqref{Eq:H1_bis} is satisfied.  Moreover, under the stronger assumptions where \eqref{Eq:H1}-\eqref{Eq:H2}-\eqref{Eq:H3} hold, then the semigroup $\{T_\A(t)\}_{t\geq 0}$ has asynchronous exponential growth.
\end{theorem}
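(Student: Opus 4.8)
The plan is to regard $\A=(A+B_1+B_2)+B_3$ as a perturbation of $A+B_1+B_2$ by the \emph{weakly compact} operator $B_3$ (the map $(u_1,u_2)\mapsto(Ku_1,0)$ is weakly compact on $\X=L^1(0,m)\times L^1(0,m)$ since $K$ is, by Assumption~\ref{Assump1}), and to show that this perturbation leaves the essential type equal to $-\infty$. Write $T_0(t):=T_{A+B_1+B_2}(t)$. By Theorem~\ref{Thm:SpectBound_inf}, $s(A+B_1+B_2)=-\infty$; moreover $\{T_0(t)\}_{t\ge0}$ is nilpotent — there is no recruitment term in $A+B_1+B_2$ and in each equation the size variable travels at speed $\ge\gamma_0$, so backward characteristics reach the boundary $s=0$ (where the data vanish) in time $\le m/\gamma_0$ and hence $T_0(t)=0$ for $t\ge m/\gamma_0$ — so $\omega_0(A+B_1+B_2)=-\infty$ (consistently with Theorem~\ref{Thm:SpectBound_inf}; alternatively this follows from positivity of $\{T_0(t)\}$ on the $L^1$-lattice $\X$). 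Thus for every $\omega<0$ there is $M_\omega\ge1$ with $\|T_0(t)\|_{\L(\X)}\le M_\omega e^{\omega t}$ for all $t\ge0$.

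Next I would show $\omega_{\ess}(\A)=-\infty$. Iterating the variation-of-constants formula twice gives
$$T_\A(t)=T_0(t)+\int_0^t T_0(t-s)B_3T_0(s)\,ds+R_2(t),\qquad R_2(t):=\int_0^t\!\!\int_0^s T_0(t-s)B_3T_0(s-r)B_3T_\A(r)\,dr\,ds.$$
The middle block $B_3T_0(s-r)B_3$ is \emph{compact}: it is the composition of the weakly compact operator $T_0(s-r)B_3$ with $B_3$, and on a space with the Dunford--Pettis property (such as $\X$) every weakly compact operator is completely continuous, so a product of two weakly compact operators is compact. By the standard approximation argument underlying the stability of essential type under weakly compact perturbations on $L^1$ (in the spirit of \cite{MokhtarRichard18} and the references therein), $R_2(t)\in\mathcal{K}(\X)$ for every $t\ge0$. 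Using $\|\cdot\|_{\ess}\le\|\cdot\|_{\L(\X)}$, subadditivity of $\|\cdot\|_{\ess}$, and $\|L+\mathcal{K}\|_{\ess}=\|L\|_{\ess}$ for $\mathcal{K}\in\mathcal{K}(\X)$, we obtain for every $\omega<0$
$$\|T_\A(t)\|_{\ess}\le\|T_0(t)\|_{\L(\X)}+\Big\|\int_0^tT_0(t-s)B_3T_0(s)\,ds\Big\|_{\L(\X)}\le M_\omega e^{\omega t}+\|B_3\|\,M_\omega^2\,t\,e^{\omega t},$$
whence $\omega_{\ess}(\A)=\lim_{t\to\infty}t^{-1}\ln\|T_\A(t)\|_{\ess}\le\omega$; as $\omega<0$ is arbitrary, $\omega_{\ess}(\A)=-\infty$.

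The equivalence follows immediately. Since $\omega_{\ess}(\A)=-\infty$ and $\omega_0(\A)=\max\{\omega_{\ess}(\A),s(\A)\}$, the semigroup has a spectral gap $\omega_{\ess}(\A)<\omega_0(\A)$ if and only if $\omega_0(\A)>-\infty$, i.e. if and only if $\sigma(\A)\ne\emptyset$; by Theorem~\ref{Thm:SpectBound} this is exactly Assumption~\eqref{Eq:H1_bis}. Finally, if \eqref{Eq:H1}-\eqref{Eq:H2}-\eqref{Eq:H3} hold, then \eqref{Eq:H1} implies \eqref{Eq:H1_bis} (take $\delta=\varepsilon$), so there is a spectral gap; by Theorem~\ref{Thm:Irr} the semigroup $\{T_\A(t)\}_{t\ge0}$ is irreducible; it is positive on the Banach lattice $\X$; hence Theorem~\ref{Thm:StandardAsync} yields asynchronous exponential growth with a rank-one spectral projection.

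I expect the crux to be the compactness of $R_2(t)$: promoting the pointwise-in-$(s,r)$ compactness of $B_3T_0(s-r)B_3$ to norm-compactness of the operator-valued double integral requires the usual care with measurability/Riemann-sum approximation for an integrand that is only strongly (not norm) continuous, and this is precisely where the weak compactness of $K$ and the Dunford--Pettis property of $L^1$ are used in an essential way. Everything else — the Dyson--Phillips bookkeeping, the elementary inequalities for $\omega_0$, $\omega_{\ess}$, $s(\cdot)$, and the appeals to Theorems~\ref{Thm:Irr}, \ref{Thm:SpectBound}, \ref{Thm:SpectBound_inf} and \ref{Thm:StandardAsync} — is routine.
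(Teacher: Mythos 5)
Your overall architecture (perturb $A+B_1+B_2$ by the weakly compact $B_3$, use Theorems \ref{Thm:SpectBound_inf} and \ref{Thm:SpectBound} plus $s=\omega_0$ for positive semigroups on $L^1$ to get the equivalence, then Theorems \ref{Thm:Irr} and \ref{Thm:StandardAsync} for the growth statement) matches the paper, and the bookkeeping at the end is correct. The one step that does not hold up as written is exactly the one you flag: the compactness of $R_2(t)$. The ``standard'' convex-compactness results for strong integrals (\cite{Schluchtermann92}, Theorem 2.2; \cite{Mokhtar2004}, Theorem 1 --- the ones the paper itself invokes) give \emph{weak} compactness of a strong integral of a pointwise weakly compact, bounded, strongly measurable integrand; they do not give norm compactness when the integrand is pointwise compact but only strongly (not norm) continuous. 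The obvious attempts do not close this: a dominated-convergence argument shows only that $R_2(t)$ is completely continuous, and on $L^1$ an operator that is weakly compact and completely continuous need not be compact (every weakly compact operator on $L^1$ is automatically completely continuous, yet weakly compact non-compact operators exist). So as stated, ``$R_2(t)\in\mathcal{K}(\X)$ by the standard approximation argument'' is an unproven assertion, not a routine citation; you would need either norm continuity of $(s,r)\mapsto T_0(t-s)B_3T_0(s-r)B_3T_{\A}(r)$, or a genuine theorem on strong integrals of compact-operator-valued maps, neither of which you supply.

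The paper avoids this issue entirely: it iterates Duhamel only once, concludes that $T_{\A}(t)-T_{A+B_1+B_2}(t)$ is \emph{weakly} compact, and then cites \cite{Mokhtar97} (Theorem 2.10, p.~24): on $L^1$, two $C_0$-semigroups whose difference is weakly compact have the same essential type; combined with $\omega_0(A+B_1+B_2)=-\infty$ this gives $\omega_{\ess}(\A)=-\infty$ without ever touching compactness of an operator-valued integral. If you want to keep your direct-estimate philosophy without that citation, there is an elementary repair that needs only the first Duhamel term: set $W(t):=T_{\A}(t)-T_0(t)$ (weakly compact), write $T_{\A}(2t)=T_0(t)^2+T_0(t)W(t)+W(t)T_0(t)+W(t)^2$, note that $W(t)^2$ is compact by the Dunford--Pettis property of $\X$, and hence $\|T_{\A}(2t)\|_{\ess}\leq \|T_0(t)\|\left(\|T_0(t)\|+2\|W(t)\|\right)\leq M_\omega e^{\omega t}\left(M_\omega e^{\omega t}+2Ce^{ct}\right)$, which gives $\omega_{\ess}(\A)\leq(\omega+c)/2$ for every $\omega<0$ and so $\omega_{\ess}(\A)=-\infty$. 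With the gap at $R_2(t)$ either filled this way or replaced by the citation, the rest of your argument (including the ``only if'' direction via $s(\A)=-\infty$ when \eqref{Eq:H1_bis} fails, and the asynchronous exponential growth via irreducibility) is correct and essentially the paper's.
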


\begin{proof}
The semigroups $\{T_\A(t)\}_{t\geq 0}$ and $
\{T_{A+B_1+B_2}(t)\}_{t\geq 0}$ are related by the \break Duhamel equation
\begin{equation*}
T_\A(t)=T_{A+B_1+B_2}(t)+\int_{0}^{t}T_{A+B_1+B_2}(t-s)B_3T_\A(s)ds.
\end{equation*}
Since $B_3$ is a weakly compact operator then so is $T_{A+B_1+B_2}(t-s)B_3T_\A(s)$ for all $%
s\geq 0.\ $ It follows that the \textit{strong} integral
\begin{equation*}
\int_{0}^{t}T_{A+B_1+B_2}(t-s)B_3T_\A(s)ds
\end{equation*}
is a weakly compact operator (see \cite{Mokhtar2004} Theorem 1 or \cite{Schluchtermann92} Theorem 2.2). Hence $T_\A(t)-T_{A+B_1+B_2}(t)$ is a weakly compact operator
and consequently (see \cite{Mokhtar97} Theorem 2.10, p. 24) $\{T_\A(t)\}_{t\geq 0}$ and $\{T_{A+B_1+B_2}(t)\}_{t\geq 0}$ have the \textit{same}
essential type 
\begin{equation*}
\omega _{\ess}(\A)=\omega _{\ess}(A+B_1+B_2),
\end{equation*}
in particular
\begin{equation*}
\omega _{\ess}(\A)\leq \omega _{0}(A+B_1+B_2).
\end{equation*}%
Note that $s(A+B_1+B_2)=\omega _{0}(A+B_1+B_2)$ and $s(\A)=\omega_{0}(\A)$
since $\{T_\A(t)\}_{t\geq 0}$ and $\{T_{A+B_1+B_2}(t)\}_{t\geq 0}$ are positive semigroups
on $L^{1}$ spaces (see e.g. \cite{EngelNagel2000} Theorem VI.1.15, p. 358). If \eqref{Eq:H1_bis} is satisfied, then applying Theorem \ref{Thm:SpectBound_inf} and Theorem \ref{Thm:SpectBound} we get respectively
$$\omega_0(\A)>-\infty \quad \text{and} \quad \omega_0(A+B_1+B_2)=-\infty$$
so
\begin{equation*}
\omega _{\ess}(\A)<\omega _{0}(\A)
\end{equation*}
whence the existence of a spectral gap. If \eqref{Eq:H1_bis} does not hold, then it is clear by Theorem \ref{Thm:SpectBound} that $\omega_0(\A)=-\infty$ and there cannot be a spectral gap. If the assumptions \eqref{Eq:H1}-\eqref{Eq:H2}-\eqref{Eq:H3} hold, then we immediately see that \eqref{Eq:H1_bis} is satisfied and we have a spectral gap. Combining this with the irreducibility of $\{T_\A(t)\}_{t\geq 0}$ obtained in Theorem \ref{Thm:Irr}, we use 
Theorem \ref{Thm:StandardAsync} to end the proof.
\end{proof}

\subsection{Time asymptotics in absence of irreducibility}

Two kinds of results are given. We start with:

\begin{theorem}\label{Thm:No_Irr}
Suppose that Assumption \ref{Assump1} holds and that \eqref{Eq:H1_bis} is satisfied, i.e. that the $C_0$-semigroup $\{T_{\A}(t)\}_{t\geq 0}$ has a spectral gap. Then, the peripheral spectrum of $\A$ reduces to $s(\A)$, i.e.
$$\sigma(\A)\cap \{\lambda\in \C: \Re(\lambda)=s(\A)\}=\{s(\A)\};$$
and there exists a nonzero finite rank projection $P_0$ in $\X$ such that
\begin{equation}
\label{Eq:AEG_Gap} \lim_{t\to \infty}\|e^{-s(\A)t}T_{\A}(t)-e^{tD}P_0\|_{\X}=0
\end{equation}
where $D:=(s(\A)-\A)P_0$.
\end{theorem}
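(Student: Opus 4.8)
The plan is to exploit the structure established in the previous theorems: $\{T_{\A}(t)\}_{t\ge 0}$ and $\{T_{A+B_1+B_2}(t)\}_{t\ge 0}$ differ by a weakly compact operator, they have the same essential type $\omega_{\ess}(\A)=\omega_{\ess}(A+B_1+B_2)=s(A+B_1+B_2)=-\infty$ (Theorem \ref{Thm:SpectBound_inf}), while $s(\A)=\omega_0(\A)>-\infty$ under \eqref{Eq:H1_bis} (Theorem \ref{Thm:SpectBound}). Thus $\omega_{\ess}(\A)<\omega_0(\A)$, so the part of the spectrum of $\A$ in the half-plane $\{\Re\lambda>\omega_{\ess}(\A)\}$, and in particular the peripheral spectrum $\sigma(\A)\cap\{\Re\lambda=s(\A)\}$, consists of finitely many eigenvalues of finite algebraic multiplicity (this is the classical consequence of a spectral gap; see e.g.\ \cite{EngelNagel2000}, \cite{Nagel86}). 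First I would record that $\A$ is resolvent positive with a spectral gap on an $L^1$-lattice, so that the standard theory of positive semigroups applies.

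Second, I would invoke the cyclicity of the peripheral point spectrum of positive semigroups: for a positive semigroup on a Banach lattice, if $s(\A)\in\sigma(\A)$ is a pole of the resolvent and $s(\A)+i\eta$ is a peripheral eigenvalue, then the whole "chain" $s(\A)+ik\eta$, $k\in\Z$, lies in $\sigma(\A)$ (see \cite{Nagel86}, C-III). Since the peripheral spectrum is finite, this forces $\eta=0$, i.e.\ $\sigma(\A)\cap\{\Re\lambda=s(\A)\}=\{s(\A)\}$. The key auxiliary input needed here is that $s(\A)$ is actually a pole of $(\lambda-\A)^{-1}$: this follows because $\A$ has compact resolvent (noted before Theorem \ref{Thm:SpectBound_inf}), so every spectral value is an isolated eigenvalue of finite algebraic multiplicity, hence a pole. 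Let $P_0$ be the corresponding spectral (Riesz) projection; it is a nonzero finite-rank projection, and $D:=(s(\A)-\A)P_0=-( \A-s(\A))P_0$ is the nilpotent part of $\A-s(\A)$ on the finite-dimensional range of $P_0$, so $e^{tD}$ is a polynomial in $t$.

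Third, for the convergence \eqref{Eq:AEG_Gap} I would decompose, using the spectral projection $P_0$, the semigroup as $e^{-s(\A)t}T_{\A}(t)=e^{-s(\A)t}T_{\A}(t)P_0+e^{-s(\A)t}T_{\A}(t)(I-P_0)$. On $\operatorname{ran}P_0$ the generator is $\A P_0$ with $\sigma(\A P_0)=\{s(\A)\}$, so $e^{-s(\A)t}T_{\A}(t)P_0=e^{t(\A-s(\A))P_0}=e^{tD}P_0$ exactly. On the complementary invariant subspace $\operatorname{ran}(I-P_0)$ the restricted generator $\A(I-P_0)$ has spectral bound strictly below $s(\A)$; combining the spectral gap $\omega_{\ess}(\A)<s(\A)$ with the fact that outside the peripheral spectrum there are only finitely many eigenvalues with real part in $(\omega_{\ess}(\A),s(\A))$, one gets $\omega_0\big(\A(I-P_0)\big)<s(\A)$. (This is again a standard consequence: the growth bound of the restricted semigroup is governed by $\max\{\omega_{\ess}(\A),\ \sup\Re\sigma(\A)\setminus\{s(\A)\}\}$.) Hence $\|e^{-s(\A)t}T_{\A}(t)(I-P_0)\|_{\X}\le C e^{-\delta t}\to 0$ for some $\delta>0$, which yields \eqref{Eq:AEG_Gap}.

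I expect the main obstacle to be the rigorous justification of the inequality $\omega_0\big(\A(I-P_0)\big)<s(\A)$ — i.e.\ that peeling off the single peripheral eigenvalue $s(\A)$ genuinely lowers the growth bound. This is where one must use that $L^1$ is a space on which the spectral bound of a positive semigroup generator equals its growth bound (\cite{EngelNagel2000}, Theorem VI.1.15), applied not to $\A$ itself but to the restriction $\A(I-P_0)$, together with the finiteness of the spectrum in the strip $\{\omega_{\ess}(\A)<\Re\lambda\le s(\A)\}$ coming from the spectral gap. Once $\omega_0(\A(I-P_0))<s(\A)$ is in hand, everything else is bookkeeping with Riesz projections. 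The cyclicity argument for the peripheral spectrum is classical and should only be cited.
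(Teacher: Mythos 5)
Your proposal is correct, but it is a genuinely more self-contained route than the paper's: the paper's entire proof of Theorem \ref{Thm:No_Irr} consists of citing \cite{Clement87}, Theorem 9.10, p.~223 and Theorem 9.11, p.~224, which package exactly the three ingredients you develop by hand --- finiteness of $\sigma(\A)\cap\{\Re\lambda\geq\alpha\}$ for $\alpha>\omega_{\ess}(\A)$, cyclicity of the peripheral point spectrum of a positive semigroup when $s(\A)$ is a pole of the resolvent (here guaranteed since $\A$ has compact resolvent, as noted before Theorem \ref{Thm:SpectBound_inf}, and $s(\A)\in\sigma(\A)$ by positivity and Theorem \ref{Thm:SpectBound}), and the Riesz-projection splitting with exponential decay of the complementary part. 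So what you do differently is to re-derive from the Perron--Frobenius theory of \cite{Nagel86} what the paper imports wholesale; the citation buys brevity, your version makes visible exactly where positivity, the spectral gap and the compactness of the resolvent enter, which is instructive.

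Two remarks on the details. First, the step you single out as the main obstacle, $\omega_0\bigl(\A|_{\operatorname{ran}(I-P_0)}\bigr)<s(\A)$, should not be handled by applying ``spectral bound equals growth bound on $L^1$'' to the restricted semigroup: $\operatorname{ran}(I-P_0)$ need not be an ideal of $\X$, so the restriction need not be a positive semigroup and \cite{EngelNagel2000}, Theorem VI.1.15 does not apply to it. It is also unnecessary: the identity $\omega_0=\max\{\omega_{\ess},s\}$ holds for \emph{every} $C_0$-semigroup, and applied to the restriction --- whose essential type is at most $\omega_{\ess}(\A)=\omega_{\ess}(A+B_1+B_2)\leq\omega_0(A+B_1+B_2)=-\infty$ by Theorems \ref{Thm:Asynch} and \ref{Thm:SpectBound_inf}, and whose spectrum is $\sigma(\A)\setminus\{s(\A)\}$ with $\sup\Re\bigl(\sigma(\A)\setminus\{s(\A)\}\bigr)<s(\A)$ by the finiteness you already established --- it gives the strict inequality at once; this is precisely the ``standard consequence'' you mention in parentheses, and it is the right tool. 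Second, a sign slip inherited from the statement: on $\operatorname{ran}P_0$ the exact identity is $e^{-s(\A)t}T_{\A}(t)P_0=e^{t(\A-s(\A))P_0}P_0$, so the nilpotent operator appearing in \eqref{Eq:AEG_Gap} is $(\A-s(\A))P_0$ rather than $(s(\A)-\A)P_0$; your computation uses the correct sign but contradicts the definition of $D$ you copied, so state it consistently.
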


\begin{proof}
It follows from \cite{Clement87}, Theorem 9.10, p. 223 and Theorem 9.11, p. 224.
\end{proof}

Note that, if $\{T_{\A}(t)\}_{t\geq 0}$ is irreducible, then it has also a spectral gap, whence the asynchronous exponential growth of the semigroup. In this case, the spectral bound $s(\A)$ is algebraically simple (see e.g. \cite{Clement87}, Theorem 9.10, p.223) and the nilpotent operator $D$ that appears in \eqref{Eq:AEG_Gap} is actually zero. Whether the spectral bound could be semi-simple (i.e. a simple pole of the resolvent) when $\{T_{\A}(t)\}_{t\geq 0}$ is not irreducible, is an open problem.

It may happen that $\{T_{\A}(t)\}_{t\geq 0}$ is not irreducible but leaves invariant a subspace on which it is irreducible. This is our second result.

\begin{theorem}\label{Thm:Small_AEG}
Suppose that Assumption \ref{Assump1} holds and that \eqref{Eq:H3} and \eqref{Eq:H1_bis} are satisfied. We thus define
$$b_1:=\inf\{\delta\in[0,m] : \int_0^\delta \int_\delta^m \beta(s,y)dyds>0\}<m.$$
We suppose also that 
\begin{equation}\label{Eq:HypC1}
|\supp(c_1)\cap [b_1,m]| \neq 0
\end{equation}
and
\begin{equation}\label{Eq:HypBeta}
\forall \varepsilon\in (b_1,m): \int_0^\varepsilon \int_\varepsilon^m \beta(s,y)dyds>0
\end{equation}
so we can define
$$b_2:=\inf\{\delta\in [b_1,m] : |\supp(c_1)\cap [b_1,\delta)|\neq 0\}.$$
Let 
$$\Y:=L^1(b_1,m)\times L^1(b_2,m).$$
Then $\Y$ is invariant under $\{T_{\A}(t)\}_{t\geq 0}$, and there exists a projection $\tilde{P_0}$ of rank one, in $\Y$ such that
$$\lim_{t\to \infty}e^{-s(\A_{\Y}) t}T_{\A_{\Y}}(t)u=\tilde{P_0} u$$
for every $u\in \Y$, where 
$$\{T_{\A_{\Y}}(t)\}_{t\geq 0}={\{T_{\A}(t)\}_{t\geq 0}}_{\mid \Y}$$
and $\A_{\Y}$ is the generator of $\{T_{\A_{\Y}}(t)\}_{t\geq 0}$.
\end{theorem}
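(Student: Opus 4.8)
The plan is to show three things in sequence: (i) $\mathcal{Y}=L^1(b_1,m)\times L^1(b_2,m)$ is a closed ideal of $\mathcal{X}$ invariant under the semigroup, (ii) the restricted semigroup $\{T_{\mathcal{A}_{\mathcal{Y}}}(t)\}_{t\geq 0}$ is irreducible on $\mathcal{Y}$, and (iii) it has a spectral gap; then Theorem \ref{Thm:StandardAsync} applied to the Banach lattice $\mathcal{Y}$ delivers asynchronous exponential growth with a rank-one projection, which is exactly the asserted convergence. Invariance of $\mathcal{Y}$ should be checked the same way as in the converse part of Theorem \ref{Thm:Irr}: since $B_1\leq 0$ one has $(\lambda I-\mathcal{A})^{-1}\leq(\lambda I-(A+B_2+B_3))^{-1}$, and one verifies that $L^1(b_1,m)$ is invariant under $(\lambda I-A^1_0)^{-1}$, under $K$ (by the definition of $b_1$, $\beta(s,y)=0$ a.e. for $s<b_1<y$, hence also $\beta(s,y)=0$ for $s<b_1\le y$ after shrinking, so $K$ maps functions supported in $[b_1,m]$ to functions supported in $[b_1,m]$), and under $c_2\,\cdot$; and that $L^1(b_2,m)$ is invariant under $(\lambda I-A^2_0)^{-1}$ and receives $c_1u_1$ with $u_1$ supported in $[b_1,m]$, whose support as a multiple of $c_1$ lies in $\supp c_1\cap[b_1,m]\subset[b_2,m]$. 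Using the series \eqref{Eq:Voigt_Sum} for the various resolvents, $\mathcal{Y}$ is invariant under $(\lambda I-\mathcal{A})^{-1}$ for $\lambda$ large, hence under $\{T_{\mathcal{A}}(t)\}_{t\geq 0}$.

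Next I would establish irreducibility of $\{T_{\mathcal{A}_{\mathcal{Y}}}(t)\}_{t\geq 0}$ by redoing the positivity-improving argument of the proof of Theorem \ref{Thm:Irr} with $[0,m]$ replaced by $[b_1,m]$ for the first component and $[b_2,m]$ for the second. The key local facts that must replace \eqref{Eq:H1}, \eqref{Eq:H2}, \eqref{Eq:H3} on the smaller intervals are: the analogue of \eqref{Eq:H1} on $(b_1,m)$, which is precisely \eqref{Eq:HypBeta} together with the definition of $b_1$ as an infimum (so for every $\varepsilon\in(b_1,m)$, $\int_{b_1}^{\varepsilon}\int_{\varepsilon}^{m}\beta>0$); the analogue of \eqref{Eq:H2}, namely $\inf(\supp c_1\cap[b_1,m])=b_2$ by definition of $b_2$, which plays the role of ``$\inf\supp c_1=0$'' after the shift of origin to $b_1$ — one needs $b_2$ to be reachable so that $c_1u_1$ has infimum of support equal to $b_2$, and here one must be slightly careful that for a function $u_1>0$ on $(b_1,m]$ the product $c_1u_1$ has $\inf\supp=b_2$, which follows from $b_2=\inf\{\delta:|\supp c_1\cap[b_1,\delta)|\neq0\}$; and the analogue of \eqref{Eq:H3}, i.e. $\sup(\supp c_2\cap[b_1,m])=m$, which follows from \eqref{Eq:H3} since $\sup\supp c_2=m$ and $b_1<m$. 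With these three local conditions in hand, Steps 1–3 of the proof of Theorem \ref{Thm:Irr} go through verbatim on $\mathcal{Y}$: Step 1 pushes any $H\in\mathcal{Y}_+\setminus\{0\}$ below a vector of the form $(h,0)$ with $h>0$ somewhere, Step 2 uses the local \eqref{Eq:H1}-analogue to drag the support of $\bigl(\sum_n(B_3(\lambda-A)^{-1})^n\bigr)h$ down to $b_1$, and Step 3 uses the local \eqref{Eq:H2}-analogue to get strict positivity of both components a.e. on $(b_1,m)$ resp. $(b_2,m)$.

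Finally, for the spectral gap of $\{T_{\mathcal{A}_{\mathcal{Y}}}(t)\}_{t\geq 0}$: the restriction $A_{\mathcal{Y}}$ still has compact resolvent (the canonical injection $D(A)\cap\mathcal{Y}\hookrightarrow\mathcal{Y}$ is compact, exactly as before), and the restriction of $B_3$ to $\mathcal{Y}$ is weakly compact since $B_3$ is; so the Duhamel argument of the proof of Theorem \ref{Thm:Asynch} gives $\omega_{\ess}(\mathcal{A}_{\mathcal{Y}})=\omega_{\ess}((A+B_1+B_2)_{\mathcal{Y}})\leq\omega_0((A+B_1+B_2)_{\mathcal{Y}})=s((A+B_1+B_2)_{\mathcal{Y}})\leq s(A+B_1+B_2)=-\infty$ by Theorem \ref{Thm:SpectBound_inf}, while $s(\mathcal{A}_{\mathcal{Y}})=\omega_0(\mathcal{A}_{\mathcal{Y}})>-\infty$ because $\{T_{\mathcal{A}_{\mathcal{Y}}}(t)\}_{t\geq 0}$ is a nonzero positive semigroup on an $L^1$-lattice (alternatively, because the local \eqref{Eq:H1}-analogue lets one rerun the first half of the proof of Theorem \ref{Thm:SpectBound} inside $\mathcal{Y}$ to get $r_\sigma((\lambda-\mathcal{A}_{\mathcal{Y}})^{-1})>0$). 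Hence $\omega_{\ess}(\mathcal{A}_{\mathcal{Y}})<\omega_0(\mathcal{A}_{\mathcal{Y}})$, and Theorem \ref{Thm:StandardAsync} applies on the Banach lattice $\mathcal{Y}$, yielding a rank-one projection $\tilde P_0$ with $e^{-s(\mathcal{A}_{\mathcal{Y}})t}T_{\mathcal{A}_{\mathcal{Y}}}(t)\to\tilde P_0$ in operator norm, a fortiori strongly on $\mathcal{Y}$.

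I expect the main obstacle to be the bookkeeping in the irreducibility step: one has to be genuinely careful that $b_1$ and $b_2$ are \emph{attained} as infima in the sense needed — i.e. that $\beta$ restricted to the corner $\{s<b_1\le y\}$ really vanishes (so $\mathcal{Y}$ is invariant) while for every $\varepsilon>b_1$ there is positive mass in $\{b_1<s<\varepsilon<y\}$ (so the support can be dragged to $b_1$ but no further issue arises), and symmetrically that $c_1u_1$ for $u_1>0$ on $(b_1,m]$ has support infimum exactly $b_2$, not something larger. Hypotheses \eqref{Eq:HypC1} and \eqref{Eq:HypBeta} are precisely what guarantee $b_2<m$ and the local \eqref{Eq:H1}-analogue, so the argument closes, but the interplay of the three infima is where a careless proof would slip.
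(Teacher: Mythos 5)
Your proposal takes essentially the same route as the paper's proof: invariance of $\Y$ via the computations in parts 3(a)--(b) of the proof of Theorem \ref{Thm:Irr} (with $\varepsilon=b_1$ and $k=b_2$), irreducibility of the restricted semigroup by rerunning the direct part of Theorem \ref{Thm:Irr} with the local analogues of \eqref{Eq:H1}--\eqref{Eq:H2}--\eqref{Eq:H3} on $[b_1,m]\times[b_2,m]$, and the spectral gap by repeating the arguments of Theorems \ref{Thm:SpectBound_inf}, \ref{Thm:SpectBound} and \ref{Thm:Asynch} on $\Y$, followed by Theorem \ref{Thm:StandardAsync}. One caveat: your primary justification that $s(\A_{\Y})>-\infty$ because the restriction is a nonzero positive semigroup on an $L^1$-lattice is not valid (Theorem \ref{Thm:SpectBound_inf} itself exhibits a nonzero positive semigroup with empty spectrum); only your parenthetical alternative, rerunning the first half of Theorem \ref{Thm:SpectBound} inside $\Y$ using the local \eqref{Eq:H1_bis}-analogue, is correct, and it is what the paper's argument relies on.
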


\begin{proof}\mbox{}
Define the operator
\begin{equation*}
\begin{array}{rcl}
\A_{\Y}\begin{pmatrix}
u_1\\
u_2
\end{pmatrix}=\begin{pmatrix}
-(\overline{\gamma_1} u_1)' \\
-(\underline{\gamma_2} u_2)'
\end{pmatrix}+\begin{pmatrix}
-(\overline{\mu}+\overline{c_1})u_1+\overline{c_2}u_2+\int_{b_1}^{m} \overline{\beta}(\cdot,y)u_1(y)dy) \\
-\underline{c_2} u_2+\underline{c_1} u_1
\end{pmatrix},
\end{array}
\end{equation*}
with domain
$$D(\A_{\Y})=\{(u_1,u_2)\in W^{1,1}(b_1,m)\times W^{1,1}(b_2,m): u_1(b_1)=0, u_2(b_2)=0\},$$
where
$$\overline{\gamma_1}={\gamma_1}_{\mid [b_1,m]}, \ \overline{\mu}=\mu_{\mid [b_1,m]}, \ \overline{c_1}={c_1}_{\mid [b_1,m]}, \ \overline{c_2}={c_2}_{\mid [b_1,m]}, \ \overline{\beta}={\beta}_{\mid [b_1,m]\times [b_1,m]}, $$
and 
$$\underline{\gamma_2}={\gamma_2}_{\mid [b_2,m]}, \ \underline{c_1}={c_1}_{\mid [b_2,m]}, \ \underline{c_2}={c_2}_{\mid [b_2,m]}.$$
As in Theorem \ref{Thm:Generation}, $\A_{\Y}$ generates a $C_0$-semigroup $\{T_{\A_{\Y}}(t)\}_{t\geq 0}$. Using the point 3.(a) of the proof of Theorem \ref{Thm:Irr}, with $\varepsilon=b_1$, we know that
$$L^1(b_1,m)\times L^1(b_1,m)$$
is a closed ideal of $\X$ that is invariant under $(\lambda I-\A)^{-1}$ for every $\lambda>s(\A)$. Then, using the point 3.(b) of the proof of Theorem \ref{Thm:Irr}, with $k=b_2$, we can prove that $\Y$ is a closed ideal of $\X$ that is invariant under $(\lambda I-\A)^{-1}$ for every $\lambda>s(\A)$. Consequently 
$${\{T_{\A}(t)\}_{t \geq 0}}_{\mid \Y}=\{T_{\A_{\Y}}(t)\}_{t \geq 0}.$$
By means of \eqref{Eq:HypBeta} and by definition of $b_1$, we see that
$$\forall \varepsilon \in(b_1,m) : \int_{b_1}^\varepsilon \int_{\varepsilon}^m \beta(s,y)dyds>0.$$
Using \eqref{Eq:HypC1} and by definition of $b_2$, we have
$$\inf \supp(\overline{c_1})=\inf \supp(\underline{c_1})=b_2.$$
Consequently, as for Theorem \ref{Thm:Irr}, $\A_{\Y}$ is irreducible and
$$\omega_\ess(\A_{\Y})<\omega_0(\A_{\Y}).$$
Therefore, as in Theorem \ref{Thm:Asynch}, the semigroup $\{T_{\A_{\Y}}(t)\}_{t\geq 0}$ has the property of asynchronous exponential growth. Thus we get
$$\lim_{t\to \infty} e^{-s(\A_{\Y})t}T_{\A_{\Y}}(t)=\tilde{P_0},$$
where $\tilde{P_0}$ is a projection of rank one in $\Y$.
\end{proof}

Note that $s(\A_{\Y})\leq s(\A)$. It is unclear whether the inequality is strict.

\section{Models with unbounded sizes}

In this section we consider the following model

\begin{equation}\label{Eq:Model_inf}
\left\{
\begin{array}{rcl}
\partial_t u_1(t,s)+\partial_s(\gamma_1(s)u_1(t,s))&=&-\mu(s)u_1(t,s)+\int_0^\infty \beta(s,y)u_1(t,y)dy \\
&& -c_1(s)u_1(t,s)+c_2(s)u_2(t,s),\\
\partial_t u_2(t,s)+\partial_s(\gamma_2(s)u_2(t,s))&=&c_1(s)u_1(t,s)-c_2(s)u_2(t,s),
\end{array}
\right.
\end{equation}
for $s, t\geq 0,$ with the Dirichlet boundary conditions \eqref{Eq:BoundCond}.
Let the Banach space
$$\X=(L^1(0,\infty)\times L^1(0,\infty),\|\cdot \|_{\X})$$
with norm
$$\|(x_1,x_2)\|_{\X}=\|x_1\|_{L^1(0,\infty)}+\|x_2\|_{L^1(0,\infty)}.$$
We denote by $\X_+$ the nonnegative cone of $\X$. We now suppose in all this section the following hypotheses on the different parameters:
\begin{enumerate}
\item $\mu, c_1, c_2 \in L^{\infty}(0,\infty), \gamma_1, \gamma_2 \in W^{1,\infty}(0,\infty),$
\item $\beta, \mu, c_1, c_2 \geq 0$ and there exists $\gamma_0>0$ such $\gamma_1(s)\geq \gamma_0, \gamma_2(s)\geq \gamma_0$ a.e. $s\geq 0$,
\item the operator 
$$K: L^1(0,\infty)\ni u\mapsto  \int_0^{\infty} \beta(\cdot,y)u(y)dy \in L^1(0,\infty)$$
is bounded (see Remark \ref{Rem:Bounded}).
\end{enumerate}

Using \eqref{Eq:Model_inf}, we define
\begin{equation*}
\begin{array}{rcl}
\A \begin{pmatrix}
u_1\\
u_2
\end{pmatrix}&=&A \begin{pmatrix}
u_1 \\
u_2
\end{pmatrix}+B \begin{pmatrix}
u_1 \\
u_2
\end{pmatrix}\\
&=&\begin{pmatrix}
-(\gamma_1 u_1)' \\
-(\gamma_2 u_2)'
\end{pmatrix}+\begin{pmatrix}
-(\mu+c_1)u_1 +c_2u_2+\int_0^\infty \beta(\cdot,y)u_1(y)dy\\
-c_2 u_2+c_1 u_1
\end{pmatrix},
\end{array}
\end{equation*}
with domain
$$D(A)=\{(u_1,u_2)\in W^{1,1}(0,\infty)\times W^{1,1}(0,\infty):u_1(0)=0, u_2(0)=0\}.$$
We decompose $B$ into three operators:
\begin{equation*}
\begin{array}{rcl}
B\begin{pmatrix}
u_ 1\\
u_2
\end{pmatrix}
&=&B_1\begin{pmatrix}
u_1 \\
u_2
\end{pmatrix}
+B_2\begin{pmatrix}
u_1 \\
u_2
\end{pmatrix}
+B_3\begin{pmatrix}
u_1 \\
u_2
\end{pmatrix}\\
&=&\begin{pmatrix}
-(\mu+c_1)u_1 \\
-c_2 u_2
\end{pmatrix} 
+\begin{pmatrix}
c_2 u_2 \\
c_1 u_1
\end{pmatrix}
+\begin{pmatrix}
\int_0^\infty \beta(\cdot,y)u_1(y)dy \\
0
\end{pmatrix}.
\end{array}
\end{equation*}
We are then concerned with the following Cauchy problem
\begin{equation*}
\left\{
\begin{array}{rcl}
U'(t)&=&\A U(t), \\
U(0)&=&(u^0_1,u^0_2)\in \X,
\end{array}
\right.
\end{equation*}
where
$$U(t)=(u_1(t),u_2(t))^T.$$

\subsection{Semigroup generation}

\begin{lemma}\label{Lemma:Resolv_inf}
Let $H:=(h_1,h_2)\in \X$ and $\lambda \in \R$. The solution of
\begin{equation}\label{Eq:System}
\left\{
\begin{array}{rcl}
\lambda u_1+(\gamma_1 u_1)'&=&h_1, \\
\lambda u_2+(\gamma_2 u_2)'&=&h_2, \\
u_1(0)&=&u_2(0)=0,
\end{array}
\right.
\end{equation}
is given by
\begin{equation}\label{Eq:Range_inf}
\left\{
\begin{array}{rcl}
u_1(s)&=& \displaystyle \dfrac{1}{\gamma_1(s)} \int_0^s h_1(y) \exp\left(-\int_y^s \dfrac{\lambda}{\gamma_1(z)}dz\right)dy \vspace{0.1cm}, \\
u_2(s)&=& \displaystyle \dfrac{1}{\gamma_2(s)} \int_0^s h_2(y) \exp\left(-\int_y^s \dfrac{\lambda}{\gamma_2(z)}dz\right)dy,
\end{array}
\right.
\end{equation}
for every $s\geq 0$. In particular, 
$U:=(u_1, u_2)\in D(A)$ if and only if $U\in \X$. Moreover, if $H\in \X_+$, then
$$\supp u_1=[\inf \supp(h_1),\infty), \qquad \supp u_2=[\inf \supp(h_2),\infty).$$
\end{lemma}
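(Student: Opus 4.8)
The plan is to treat \eqref{Eq:System} as two decoupled scalar first-order ODEs, solve each explicitly by an integrating factor, and then read off both the domain characterisation and the support statement directly from the resulting formula. For each $i\in\{1,2\}$ I set $v_i:=\gamma_i u_i$, so that the equation becomes $v_i'+\tfrac{\lambda}{\gamma_i}v_i=h_i$ with $v_i(0)=\gamma_i(0)u_i(0)=0$. Since $\gamma_i\geq\gamma_0>0$ and $\gamma_i\in W^{1,\infty}(0,\infty)$, the coefficient $\lambda/\gamma_i$ lies in $L^\infty(0,\infty)$, so the integrating factor $\Phi_i(s):=\exp\bigl(\int_0^s\tfrac{\lambda}{\gamma_i(z)}\,dz\bigr)$ is well defined, locally Lipschitz, and bounded above and below by positive constants on every compact set. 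Multiplying through and integrating from $0$ to $s$ gives $\Phi_i(s)v_i(s)=\int_0^s\Phi_i(y)h_i(y)\,dy$, which after dividing by $\gamma_i(s)\Phi_i(s)$ is exactly the formula \eqref{Eq:Range_inf} for $u_i$. Uniqueness among locally absolutely continuous solutions is immediate because the homogeneous equation with zero initial value has only the trivial solution; in particular any $U\in D(A)$ solving \eqref{Eq:System} must be the one given by \eqref{Eq:Range_inf}.

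Next I would observe that \eqref{Eq:Range_inf} \emph{always} defines a function $u_i\in W^{1,1}_{\mathrm{loc}}(0,\infty)\subset C([0,\infty))$ with $u_i(0)=0$, irrespective of any global integrability. From \eqref{Eq:System} one has $u_i'=\gamma_i^{-1}\bigl(h_i-\gamma_i'u_i-\lambda u_i\bigr)$ almost everywhere, and since $h_i\in L^1(0,\infty)$, $\gamma_i^{-1}\leq\gamma_0^{-1}$ and $\|\gamma_i'\|_{L^\infty}<\infty$, this yields the estimate $\|u_i'\|_{L^1}\leq\gamma_0^{-1}\bigl(\|h_i\|_{L^1}+(\|\gamma_i'\|_{L^\infty}+|\lambda|)\|u_i\|_{L^1}\bigr)$. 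Hence $u_i'\in L^1(0,\infty)$ as soon as $u_i\in L^1(0,\infty)$, so $U=(u_1,u_2)\in W^{1,1}(0,\infty)\times W^{1,1}(0,\infty)$ precisely when $U\in\X$; the boundary conditions hold by construction, whence $U\in D(A)\iff U\in\X$, the reverse inclusion being trivial.

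Finally, for the support assertion I would use that when $H\in\X_+$ each $h_i\geq 0$ a.e. and the kernel $\tfrac{1}{\gamma_i(s)}\exp\bigl(-\int_y^s\tfrac{\lambda}{\gamma_i(z)}\,dz\bigr)$ is strictly positive for all $0\leq y\leq s$. Therefore $u_i(s)\geq 0$, and $u_i(s)>0$ if and only if $h_i$ does not vanish a.e.\ on $(0,s)$, i.e.\ if and only if $s>\inf\supp h_i$; consequently $\supp u_i=[\inf\supp h_i,\infty)$. The one genuinely new point compared with the bounded case (Lemma \ref{Lemma:Resolv}) is that here the explicit solution need no longer be integrable on $(0,\infty)$, so the range condition is not automatic but is exactly the requirement $U\in\X$ — which is why the statement is phrased as an equivalence. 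I expect this integrability dichotomy to be the conceptual crux, and it is precisely the reason the semigroup-generation argument for $m=\infty$ will have to differ from the finite-size one.
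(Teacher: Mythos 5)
Your argument is correct and is exactly the standard integrating-factor computation that the paper leaves implicit (the lemma is stated without proof, as the analogue of the ``easy to prove'' finite-size Lemma \ref{Lemma:Resolv}): the explicit formula, the observation that $u_i'\in L^1$ whenever $u_i\in L^1$ (giving $U\in D(A)\iff U\in\X$), and the strict positivity of the kernel for the support identity are precisely the points needed. No gaps.
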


\begin{remark}\label{Rem:Abbreviations}
In all the sequel, for the simplicity of notations, we write symbolically $(\lambda-A)U=H$ instead of \eqref{Eq:System} even if $U$ need not belong to the domain of $A$. We will also use similar symbolic abbreviations in similar contexts.
\end{remark}

\begin{theorem}\label{Thm:Generation_inf}
The operator $\A$ generates a $C_0$-semigroup $\{T_{\A}(t)\}_{t\geq 0}$ of bounded linear operators on $\X$.
\end{theorem}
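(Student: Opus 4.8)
The plan is to mimic the proof of Theorem~\ref{Thm:Generation} from the bounded-size case, the only difference being that the interval $(0,\infty)$ replaces $(0,m)$, so one must be careful about integrability at infinity rather than using the crude exponential bounds available when $m<\infty$. Since $B=B_1+B_2+B_3$ is a bounded operator on $\X$ (by hypothesis (3) together with $\mu,c_1,c_2\in L^\infty$), by the bounded perturbation theorem it suffices to show that $A$ generates a $C_0$-semigroup; in fact I will show $A$ generates a contraction semigroup and invoke the Lumer--Phillips theorem as before.

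First I would check that $D(A)$ is dense in $\X$: the space $C_c^\infty(0,\infty)\times C_c^\infty(0,\infty)$ is contained in $D(A)$ and is dense in $L^1(0,\infty)\times L^1(0,\infty)$. Next, the range condition: for $\lambda>0$ and $H=(h_1,h_2)\in\X$, Lemma~\ref{Lemma:Resolv_inf} exhibits the candidate solution $U=(u_1,u_2)$ given by \eqref{Eq:Range_inf}, and the lemma already tells us that $U\in D(A)$ as soon as $U\in\X$. So the key point is the a priori estimate $\|u_i\|_{L^1(0,\infty)}\le \lambda^{-1}\|h_i\|_{L^1(0,\infty)}$, which simultaneously gives $U\in\X$ (hence $U\in D(A)$, settling the range condition) and the dissipativity $\|(\lambda I-A)U\|_\X\ge\lambda\|U\|_\X$. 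This estimate is obtained exactly as in the proof of Theorem~\ref{Thm:Generation}: writing $\lambda u_i+(\gamma_i u_i)'=h_i$ with $u_i(0)=0$, multiply by $\operatorname{sign}(u_i)$ and integrate; decomposing $\{u_i>0\}$ and $\{u_i<0\}$ into countably many disjoint open intervals and using $u_i\in W^{1,1}_{\mathrm{loc}}\hookrightarrow C$ with $u_i$ vanishing at the endpoints of these intervals, the term $\int_0^\infty(\gamma_i u_i)'\operatorname{sign}(u_i)$ is nonnegative, whence $\lambda\|u_i\|_{L^1}\le\int_0^\infty h_i\operatorname{sign}(u_i)\le\|h_i\|_{L^1}$.

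The one genuinely new subtlety compared with the $m<\infty$ case is that the boundary term ``$\gamma_i(m)|u_i(m)|$'' has no analogue at $+\infty$: one must justify that there is no boundary contribution from infinity when integrating $(\gamma_i u_i)'$, i.e. that the telescoping sums over the intervals $(a_{j,1},a_{j,2})$ converge and produce no leftover term ``at infinity''. This follows because $\gamma_i u_i\in W^{1,1}(0,\infty)$ (as $u_i$ and $u_i'$ are in $L^1$ by \eqref{Eq:Range_inf} and the equation, using $\gamma_i\in W^{1,\infty}$ and $\gamma_i\ge\gamma_0>0$), so $\gamma_i(s)u_i(s)\to 0$ as $s\to\infty$; hence each telescoping sum converges absolutely to a limit that is $\le 0$ for the positive part and $\ge 0$ for the negative part, again giving $\int_0^\infty(\gamma_i u_i)'\operatorname{sign}(u_i)\ge 0$. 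I expect this vanishing-at-infinity argument to be the main (though mild) obstacle; everything else is a routine transcription. Having established density, the range condition for some $\lambda>0$, and dissipativity, the Lumer--Phillips theorem gives that $A$ generates a contraction $C_0$-semigroup, and then $\mathcal{A}=A+B$ generates a quasi-contraction $C_0$-semigroup $\{T_{\mathcal{A}}(t)\}_{t\ge 0}$ on $\X$ by bounded perturbation.
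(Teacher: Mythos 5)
Your proposal is correct and follows essentially the same path as the paper's proof: reduce to $A$ by bounded perturbation, apply Lumer--Phillips, get dissipativity from the sign-multiplication argument with the boundary contribution at infinity eliminated because $\gamma_iu_i\in W^{1,1}(0,\infty)$ forces $\gamma_i(s)u_i(s)\to 0$ (the paper does the same thing by computing $\int_0^m(\gamma_iu_i)'\operatorname{sign}(u_i)=\gamma_i(m)|u_i(m)|$ and letting $m\to\infty$), and settle the range condition via the explicit formula \eqref{Eq:Range_inf}. One minor slip: your intermediate sign bookkeeping is reversed (the telescoping sum over $\{u_i>0\}$ is $\geq 0$ and the one over $\{u_i<0\}$ is $\leq 0$, not the other way around), but since both limits at infinity vanish, each sum is in fact zero and your conclusion $\int_0^\infty(\gamma_iu_i)'\operatorname{sign}(u_i)\geq 0$ stands.
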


\begin{proof}
As in the finite case, we only need to prove that $A$ generates a contraction $C_0$-semigroup. The fact that $D(A)$ is densely defined in $\X$ is clear. As before, the range condition
$$(\lambda I-A)U=H,$$
where $U=(u_1,u_2)$ and $H=(h_1,h_2)\in \X$, is verified for every $\lambda>s(A)$.

It remains to prove that $A$ is a dissipative operator. Let $\lambda>0$, $U=(u_1,u_2)\in D(A)$ and $H:=(h_1,h_2)=(\lambda I-A)U$. We want to prove that
$$\|h_i\|_{L^1(0,\infty)}\geq \lambda \|u_i\|_{L^1(0,\infty)}, \quad \forall i\in\{1,2\}.$$
Let $i\in\{1,2\}$. We know that $u_i(0)=0$ and
$$\lambda u_i(s)+(\gamma_i u_i)'(s)=h_i(s), \quad \forall s\in (0,\infty).$$
An integration then leads to
$$\lambda \|u_i\|_{L^1(0,\infty)}+\int_0^\infty (\gamma_i u_i)'(s)\sign(u_i(s))ds=\int_0^\infty h_i(s)\sign(u_i(s))ds.$$
Since $u_i\in W^{1,1}(0,\infty)\hookrightarrow C([0,\infty))$, we get
\begin{eqnarray*}
&&\int_{0}^{m}(\gamma_i u_i)^{\prime }sign(u_i(s))ds =\gamma_i (m)\left\vert u_i(m)\right\vert,
\end{eqnarray*}
for every finite $m>0$. Hence
$$\int_0^\infty (\gamma_i u_i)'\sign(u_i(s))ds=\lim_{m\to \infty}\int_0^m (\gamma_i u_i)'\sign(u_i(s))ds=0$$
and we have
$$ \lambda \|u_i\|_{L^1}=\int_0^\infty h_i(s)\sign(u_i(s))ds\leq \|h_i\|_{L^1}$$ so the dissipativity of $A$ follows. Finally, $A$ generates a contraction $C_0$-semigroup $\{T_{A}(t)\}_{t\geq 0}$ by Lumer-Phillips Theorem and the operators $A+B_1$, $A+B_1+B_2$, $\A$ also generate a quasi-contraction $C_0$-semigroup $\{T_{A+B_1}(t)\}_{t\geq 0}$, $\{T_{A+B_1+B_2}(t)\}_{t\geq 0}$ and $\{T_{\A}(t)\}_{t\geq 0}$ respectively, since $B_1, B_2$ and $B_3$ are bounded operators.
\end{proof}

\subsection{On irreducibility}

Define the following hypotheses:
\begin{equation}\label{Eq:H4}
\forall \varepsilon \in (0,\infty): \quad \int_0^\varepsilon \int_\varepsilon^\infty \beta(s,y)dyds>0,
\end{equation}
\begin{equation}\label{Eq:H5}
\inf \supp c_1=0,
\end{equation}
\begin{equation}\label{Eq:H6}
\sup \supp c_2=\infty.
\end{equation}

\begin{theorem}\label{Thm:Irr_inf}
The $C_0$-semigroup $\{T_{\A}(t)\}_{t\geq 0}$ is irreducible if and only if the assumptions \eqref{Eq:H4}-\eqref{Eq:H5}-\eqref{Eq:H6} are satisfied.
\end{theorem}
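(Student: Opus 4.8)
The plan is to mirror the structure of the proof of Theorem~\ref{Thm:Irr}, reducing the irreducibility of $\{T_\A(t)\}_{t\geq 0}$ to a positivity-improving property of the resolvent and treating the two implications separately. As a preliminary, one checks exactly as in step~1 of Theorem~\ref{Thm:Irr} that $\{T_\A(t)\}_{t\geq 0}$ is positive (using Lemma~\ref{Lemma:Resolv_inf} and the positivity of $B_2$, $B_3$, together with $B_1+\|B_1\|I\geq 0$), so that irreducibility is equivalent to the resolvent $(\lambda I-\A)^{-1}$ being positivity improving for $\lambda$ large, equivalently to the absence of a nontrivial closed invariant ideal of $\X$.

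For the \emph{sufficiency} direction, assume \eqref{Eq:H4}-\eqref{Eq:H5}-\eqref{Eq:H6}. As before, since $B_1+\|B_1\|I\geq 0$ we have $(\lambda I-\A)^{-1}\geq((\lambda+\|B_1\|)I-A-B_2-B_3)^{-1}$, so it suffices to show the latter is positivity improving. Using the Voigt-type expansion \eqref{Eq:Voigt_Sum} as in \eqref{Eq:Resolv} one gets the same lower bound
$$(\lambda I-A-B_2-B_3)^{-1}\geq (\lambda I-A)^{-1}(I+B_2(\lambda I-A)^{-1})\sum_{n=1}^\infty (B_3(\lambda I-A)^{-1})^n(I+B_2(\lambda I-A)^{-1}).$$
Then one repeats the three steps verbatim with $m$ replaced by $\infty$: Step~1 uses \eqref{Eq:H6} ($\sup\supp c_2=\infty$) to absorb an arbitrary $H=(0,h_1)$ into an $(h,0)$ with $h>0$ on a right half-line, since $\supp((\lambda I-A)^{-1}(0,h_1))=[\inf\supp h_1,\infty)$ meets $\supp c_2$; Step~2 propagates the support of $(\sum_{n\geq 1}(B_3(\lambda I-A)^{-1})^n)(h,0)$ down to $0$, the contradiction argument invoking \eqref{Eq:H4} instead of \eqref{Eq:H1}; Step~3 uses \eqref{Eq:H5} ($\inf\supp c_1=0$) so that $B_2(\lambda I-A)^{-1}(h,0)=(0,c_1h_1)$ with $\inf\supp(c_1h_1)=0$, and Lemma~\ref{Lemma:Resolv_inf} then gives $u_1,u_2>0$ a.e. For the \emph{necessity} direction, argue by contraposition exactly as in step~3 of Theorem~\ref{Thm:Irr}: if \eqref{Eq:H4} fails one exhibits $\Y=L^1(\varepsilon,\infty)\times L^1(\varepsilon,\infty)$ invariant under $(\lambda I-\A)^{-1}$ (the key point being $B_3\Y\subseteq\Y$ because $\beta(s,y)=0$ a.e. for $s<\varepsilon<y$); if \eqref{Eq:H5} fails, $\Y=L^1(0,\infty)\times L^1(k,\infty)$ with $k=\inf\supp c_1>0$ works, via the same ODE argument forcing $u_2\equiv0$ on $[0,k]$; if \eqref{Eq:H6} fails, $\Y=\{0\}\times L^1(k,\infty)$ with $k=\sup\supp c_2<\infty$ works, since then $B_2(0,h_1)=(c_2h_1,0)=0$ and $B_3(0,h_1)=0$.

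The only genuinely new point compared with Theorem~\ref{Thm:Irr} is that the domain of $A$ is now characterized by $U\in D(A)\iff U\in\X$ (Lemma~\ref{Lemma:Resolv_inf}), rather than by an explicit $W^{1,1}$ membership that is automatic on a bounded interval; one must check that the functions produced by the iterated formulas \eqref{Eq:Range_inf} indeed remain in $L^1$ so that all the resolvent manipulations are legitimate — but this is exactly what \eqref{Eq:Voigt_Sum} guarantees for $\lambda>s(\cdot)$, and $B_1,B_2,B_3$ are bounded, so no new estimate is required. Thus I expect the proof to be essentially a line-by-line transcription, and the main (modest) obstacle is simply bookkeeping: verifying that each closed ideal $\Y$ in the necessity part is genuinely an order ideal of $L^1(0,\infty)\times L^1(0,\infty)$ and that the support-propagation argument in the sufficiency part still terminates on the half-line (which it does, since the infimum of a support in $[0,\infty)$ that strictly decreases at each step must reach $0$, as in Step~2 above).
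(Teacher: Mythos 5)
Your proposal is correct and takes essentially the same approach as the paper: the paper's own proof of Theorem~\ref{Thm:Irr_inf} simply states that it is similar to that of Theorem~\ref{Thm:Irr}, and your line-by-line transcription (with $m$ replaced by $\infty$, Lemma~\ref{Lemma:Resolv_inf} in place of Lemma~\ref{Lemma:Resolv}, and $\lambda$ taken large enough since $s(A)=0$ in the unbounded case) is exactly what is intended, including the three invariant ideals in the necessity part. One small caveat: your closing parenthetical about an ``infimum that strictly decreases at each step must reach $0$'' is not itself a valid argument (a strictly decreasing sequence need not reach $0$) and is also unnecessary, since the support propagation is, as you correctly describe in Step~2, a one-shot contradiction with \eqref{Eq:H4} applied to the full series $\sum_{n\geq 1}(B_3(\lambda I-A)^{-1})^n$.
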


\begin{proof}
The proof is similar to that of Theorem \ref{Thm:Irr}.
\end{proof}

\subsection{Asynchronous exponential growth}\label{Sec:Async}

We now introduce the following assumption:
\begin{assumption}\label{Assump2}
The integral operator $K$ is weakly compact. 
\end{assumption}
\noindent (see Remark \ref{Rem:Weak-Compact}). In contrast to the finite case, the asynchronous exponential growth needs an additional condition.

\begin{theorem}\label{Thm:AEG_inf}
Suppose that Assumption \ref{Assump2} holds and let the operator
\begin{equation}\label{Eq:Op_B} 
\B:=A+B_1+B_2
\end{equation}
with domain $D(\B)=D(A)$ a. If
\begin{equation}\label{Cond:spectral_bound}
s(\A)>s(\B)
\end{equation}
holds, then the semigroup $\{T_{\A}\}_{t\geq 0}$ has a spectral gap. In addition to \eqref{Cond:spectral_bound}, if  \eqref{Eq:H4}-\eqref{Eq:H5}-\eqref{Eq:H6} are also satisfied, then the semigroup $\{T_{\A}(t)\}_{t\geq 0}$ has asynchronous exponential growth.
\end{theorem}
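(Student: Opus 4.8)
The plan is to mimic the argument of Theorem \ref{Thm:Asynch}, replacing the role of Theorem \ref{Thm:SpectBound_inf} (which gave $s(A+B_1+B_2)=-\infty$ in the bounded case) by the hypothesis \eqref{Cond:spectral_bound}. First I would write the Duhamel formula relating the two semigroups,
$$T_{\A}(t)=T_{\B}(t)+\int_0^t T_{\B}(t-s)B_3 T_{\A}(s)\,ds,$$
where $\B=A+B_1+B_2$. Under Assumption \ref{Assump2}, the operator $B_3$ is weakly compact, hence $T_{\B}(t-s)B_3 T_{\A}(s)$ is weakly compact for each $s$, and by the results on strong integrals of weakly compact operator families (\cite{Mokhtar2004} Theorem 1 or \cite{Schluchtermann92} Theorem 2.2) the integral term is weakly compact. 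Therefore $T_{\A}(t)-T_{\B}(t)$ is weakly compact for every $t\geq 0$, so by \cite{Mokhtar97} Theorem 2.10 the two semigroups share the same essential type:
$$\omega_{\ess}(\A)=\omega_{\ess}(\B)\leq \omega_0(\B)=s(\B),$$
the last equality because $\{T_{\B}(t)\}_{t\geq 0}$ is a positive semigroup on an $L^1$ space (\cite{EngelNagel2000} Theorem VI.1.15). Likewise $\omega_0(\A)=s(\A)$. Hence \eqref{Cond:spectral_bound} immediately yields $\omega_{\ess}(\A)\leq s(\B)<s(\A)=\omega_0(\A)$, i.e. the spectral gap.

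For the second assertion, once the spectral gap is established, the remaining ingredient is the irreducibility of $\{T_{\A}(t)\}_{t\geq 0}$, which holds by Theorem \ref{Thm:Irr_inf} precisely because \eqref{Eq:H4}-\eqref{Eq:H5}-\eqref{Eq:H6} are assumed. Then Theorem \ref{Thm:StandardAsync} applies verbatim — $\X=L^1(0,\infty)\times L^1(0,\infty)$ is a Banach lattice, $\{T_{\A}(t)\}_{t\geq 0}$ is positive and irreducible with $\omega_{\ess}(\A)<\omega_0(\A)$ — and delivers asynchronous exponential growth with intrinsic constant $\omega_0(\A)=s(\A)$ and a rank-one spectral projection.

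The one point that genuinely requires care in the unbounded case, and which I expect to be the main obstacle, is the weak compactness of the Duhamel integral term: in the finite case $B_3$ is automatically weakly compact once $K$ is (Dunford–Pettis on a finite measure space is more forgiving), but for $m=\infty$ one must invoke the full weak compactness criterion recalled in Remark \ref{Rem:Weak-Compact}, which is exactly what Assumption \ref{Assump2} packages. One should check that $T_{\B}(t-s)B_3T_{\A}(s)$ inherits weak compactness from $B_3$ (clear, since weakly compact operators form an operator ideal) and that the map $s\mapsto T_{\B}(t-s)B_3T_{\A}(s)$ is strongly measurable and uniformly bounded on $[0,t]$, so that the cited theorems on strong integrals of weakly compact families genuinely apply. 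With those verifications in place the argument is otherwise a direct transcription of the proof of Theorem \ref{Thm:Asynch}, the only conceptual change being that the strict inequality $s(\B)<s(\A)$ is now a hypothesis rather than a consequence of $s(\B)=-\infty$.
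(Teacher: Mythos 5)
Your proposal is correct and follows essentially the same route as the paper: the paper's proof simply says ``as in the finite case'' and invokes the Duhamel/weak-compactness argument of Theorem \ref{Thm:Asynch} to get $\omega_{\ess}(\A)=\omega_{\ess}(\B)\leq s(\B)<s(\A)$, then concludes asynchronous exponential growth from Theorem \ref{Thm:Irr_inf} and Theorem \ref{Thm:StandardAsync}, exactly as you do. Your added remarks on the weak-compactness verifications for $m=\infty$ are just an expanded account of what Assumption \ref{Assump2} and the cited results already deliver.
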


\begin{proof}
As in the finite case, the weak compactness of $B_3$
implies that $\{T_{\A}(t)\}_{t\geq 0}$ and $\{T_{\B}(t)\}_{t\geq 0} $ have the same essential spectrum, and consequently the same essential type:
\begin{equation*}
\omega _{\ess}(\A)=\omega _{\ess}\left(\B\right).
\end{equation*}
Since
\begin{equation*}
\omega_{\ess}\left(\B\right)\leq s\left(\B\right)
\end{equation*}
then, using the assumption \eqref{Cond:spectral_bound}, we obtain
\begin{equation*}
\omega _{\ess}\left(\A\right)\leq s\left(\B\right)<s\left(\A \right).
\end{equation*}
Thus $\{T_{\A}(t)\}_{t\geq 0}$ exhibits a spectral gap in this case. Finally, the assumptions \eqref{Eq:H4}-\eqref{Eq:H5}-\eqref{Eq:H6}
ensure with Theorem \ref{Thm:Irr_inf}, that the semigroup $\{T_{\A}(t)\}_{t\geq 0}$ is irreducible, and therefore the asynchronous behavior is proved.
\end{proof}

\begin{remark}\label{Rem:Spec_gap-necessary}
One can show (see Remark \ref{Rem:ess-type}) that the condition \eqref{Cond:spectral_bound} is necessary for $\{T_{\A}(t)\}_{t\geq 0}$ to have a spectral gap.
\end{remark}

By means of \eqref{Eq:Voigt}, we can see that \eqref{Cond:spectral_bound} is satisfied if and only if 
\begin{equation*}
\lim_{\lambda \rightarrow s(\mathcal{B})}r_{\sigma }\left( B_{3}\left(
\lambda -\mathcal{B})^{-1}\right) \right) >1
\end{equation*}
holds, see Lemma \ref{Lemma:Voigt}.

\subsection{Further spectral results}

The object of this subsection is to show that the real spectrum of the differential operators appearing in $\B$ is connected and to estimate their spectral bounds. These results will be used in Subsection \ref{Sec:SpectralGap} to show, in some situations of practical interest, the existence or the absence of a spectral gap for $\{T_{\A}(t)\}_{t\geq 0}$. 

\subsubsection{Spectral theory of uncoupled systems}

Define the operators
$$A^i_0 u=-(\gamma_i u)', \quad \forall i\in \{1,2\}$$
for every $u\in D(A^i_0)=\{u\in W^{1,1}(0,\infty): u(0)=0\}$, $i\in\{1,2\}$, so that
$$A=\begin{pmatrix}
A^1_0 & 0 \\
0 & A^2_0
\end{pmatrix}.$$

\begin{theorem}\label{Prop:Spec_B}
We have
$$\sigma(A)\cap \R=(-\infty,0].$$
In particular, $s(A)=0$.
\end{theorem}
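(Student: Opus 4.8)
The plan is to work directly with the two scalar operators $A^1_0$ and $A^2_0$, since $A$ is block-diagonal and therefore $\sigma(A)=\sigma(A^1_0)\cup\sigma(A^2_0)$; it suffices to show $\sigma(A^i_0)\cap\R=(-\infty,0]$ for each $i$. Fix $i$ and write $\gamma=\gamma_i$. First I would establish that $(-\infty,0]\subset\sigma(A^i_0)$, i.e. that no $\lambda\le 0$ is in the resolvent set. The natural way is to exhibit approximate eigenfunctions: for $\lambda\in\R$ the (formal) homogeneous solution of $\lambda u+(\gamma u)'=0$ is $u_\lambda(s)=\frac{1}{\gamma(s)}\exp\!\left(-\int_0^s\frac{\lambda}{\gamma(z)}\,dz\right)$, which does \emph{not} satisfy the boundary condition $u(0)=0$, but when $\lambda\le 0$ the exponent $-\int_0^s \lambda/\gamma\,dz\ge 0$ is nondecreasing, so $u_\lambda$ need not be integrable (for $\lambda<0$ it grows) — the point is rather to localize it. I would take cut-offs $u_n(s)=\chi_n(s)u_\lambda(s)$ supported on $[n,2n]$ (or a translate/rescale thereof), normalized in $L^1$, and check that $\|(\lambda-A^i_0)u_n\|_{L^1}/\|u_n\|_{L^1}\to 0$: because $u_\lambda$ solves the homogeneous equation exactly, $(\lambda-A^i_0)u_n$ only picks up the derivative of the cut-off, $-(\gamma\chi_n' u_\lambda)$, whose $L^1$-norm is controlled by $\|\gamma\|_\infty\sup|\chi_n'|$ times the mass of $u_\lambda$ near the ends of the window, and with windows growing to infinity this is $o(\|u_n\|_{L^1})$ provided $\gamma_0\le\gamma\le\|\gamma\|_\infty$ keeps $u_\lambda$ from decaying or blowing up too fast. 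Hence $\lambda\in\sigma(A^i_0)$ for every $\lambda\le 0$.

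Next I would show $(0,\infty)\subset\rho(A^i_0)$. For $\lambda>0$, Lemma \ref{Lemma:Resolv_inf} (read off for the scalar equation) gives the candidate resolvent $(\lambda-A^i_0)^{-1}h(s)=\frac{1}{\gamma(s)}\int_0^s h(y)\exp\!\left(-\int_y^s\frac{\lambda}{\gamma(z)}\,dz\right)dy$; I must check this is a bounded operator on $L^1(0,\infty)$. Estimating with $\gamma\ge\gamma_0>0$ gives the exponential factor $\le e^{-\lambda(s-y)/\|\gamma\|_\infty}$, and then by Fubini $\|(\lambda-A^i_0)^{-1}h\|_{L^1}\le \frac{1}{\gamma_0}\int_0^\infty |h(y)|\int_y^\infty e^{-\lambda(s-y)/\|\gamma\|_\infty}\,ds\,dy=\frac{\|\gamma\|_\infty}{\gamma_0\lambda}\|h\|_{L^1}<\infty$. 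One also checks (as in the proof of Theorem \ref{Thm:Generation_inf}) that the image lies in $D(A^i_0)$ and that this operator is a genuine two-sided inverse of $\lambda-A^i_0$. Since $\sigma(A^i_0)$ is closed and contains $(-\infty,0]$ while $(0,\infty)\subset\rho(A^i_0)$, we get $\sigma(A^i_0)\cap\R=(-\infty,0]$ exactly, and in particular $s(A^i_0)=0$; taking the union over $i$ yields $\sigma(A)\cap\R=(-\infty,0]$ and $s(A)=0$.

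The main obstacle is the construction of the Weyl-type approximate eigenfunctions for $\lambda\le 0$ and the verification that the error term from the cut-off is genuinely negligible relative to the (possibly large) $L^1$-mass of the localized profile. For $\lambda<0$ the profile $u_\lambda$ grows, so one must choose the window $[n,2n]$ (or even a slowly-growing window $[n, n+L_n]$) and the cut-off slopes carefully so that the boundary contributions $\gamma\chi_n'u_\lambda$ do not dominate; the uniform bounds $\gamma_0\le\gamma\le\|\gamma\|_\infty$ and $\|\gamma'\|_\infty<\infty$ are exactly what make this work, and this is where the hypotheses of the section are used. Everything else — block-diagonality, the explicit resolvent formula, the Fubini estimate — is routine.
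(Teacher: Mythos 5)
Your treatment of $(0,\infty)\subset\rho(A^i_0)$ is fine (it is essentially the dissipativity/contraction estimate of Theorem \ref{Thm:Generation_inf}), and the cut-off argument does show $0\in\sigma(A^i_0)$, since at $\lambda=0$ the error is $\int|\chi_n'|\,\gamma_i u_0=\int|\chi_n'|\le 2$ while the mass of $\chi_n/\gamma_i$ grows with the window. But the heart of your plan --- Weyl-type approximate eigenfunctions for every $\lambda<0$ --- cannot work, because for real $\lambda<0$ the operator $\lambda-A^i_0$ is \emph{bounded below} on $L^1(0,\infty)$. Indeed, let $u\in D(A^i_0)$, $h=(\lambda-A^i_0)u=\lambda u+(\gamma_i u)'$, and set $v=\gamma_i u$, $E(s)=\exp\bigl(\int_0^s\frac{\lambda}{\gamma_i(z)}dz\bigr)$. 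Then $(vE)'=Eh$, and since $u\in W^{1,1}(0,\infty)$ forces $v(s)\to0$ while $E$ is bounded, one gets $v(s)=-\int_s^\infty \frac{E(y)}{E(s)}h(y)\,dy$ with $\frac{E(y)}{E(s)}\le e^{-|\lambda|(y-s)/\|\gamma_i\|_\infty}$ for $y\ge s$, hence
\begin{equation*}
\|u\|_{L^1}\le\frac{1}{\gamma_0}\|v\|_{L^1}\le\frac{\|\gamma_i\|_{L^\infty}}{\gamma_0\,|\lambda|}\,\|(\lambda-A^i_0)u\|_{L^1},\qquad \forall u\in D(A^i_0),\ \lambda<0 .
\end{equation*}
So no normalized sequence in $D(A^i_0)$ can make $\|(\lambda-A^i_0)u_n\|_{L^1}\to0$: the points $\lambda<0$ are not approximate eigenvalues (they belong to the residual spectrum: $\lambda-A^i_0$ is injective with closed, non-dense range, the adjoint having the bounded "eigenfunction" $E$). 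This failure is already visible in your own estimate: for $\lambda<0$ the profile $u_\lambda$ grows exponentially, so most of its $L^1$ mass sits in the right transition zone of the window, and the boundary term $\gamma\chi_n'u_\lambda$ is of order $|\lambda|$ times the mass, not $o(\cdot)$, no matter how you choose the window length or slopes; the bounds $\gamma_0\le\gamma\le\|\gamma\|_\infty$ do not rescue this (even constant $\gamma$ fails).

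The correct mechanism for $(-\infty,0)\subset\sigma(A^i_0)$ is non-surjectivity, which is how the paper argues: for $h\in L^1_+\setminus\{0\}$ the unique solution of $\lambda u+(\gamma_i u)'=h$, $u(0)=0$, is the explicit formula of Lemma \ref{Lemma:Resolv_inf}; at $\lambda=0$ it equals $\frac{1}{\gamma_i(s)}\int_0^s h$, which is not integrable because $\int_y^\infty \frac{ds}{\gamma_i(s)}=\infty$, and the formula is nonincreasing in $\lambda$, so the same $h$ lies outside the range of $\lambda-A^i_0$ for every $\lambda\le0$. Combining this with your (correct) resolvent bound for $\lambda>0$ (or with the contraction property, as the paper does) gives $\sigma(A)\cap\R=(-\infty,0]$. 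As written, your proposal has a genuine gap for the whole open half-line $(-\infty,0)$.
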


\begin{proof}
Note that $A$ generates a contraction $C_0$-semigroup, so
$$\sigma(A)\subset \{\lambda\in \C: \Re(\lambda)\leq 0\}.$$
Let $\lambda\in \R$ and $H:=(h_1,h_2)\in \X_+$. The solution $U_\lambda$ of
$$(\lambda I-A)U_\lambda=H, \qquad U_\lambda(0)=(0,0)$$
(see Remark \ref{Rem:Abbreviations}) given by \eqref{Eq:Range_inf} is nonincreasing in $\lambda$. Consequently
$$U_\lambda \not\in \X \Rightarrow U_\alpha \notin \X \ \forall \alpha \leq \lambda$$
and
$$\sigma(A)\cap \R=(-\infty,s(A)].$$
Let $\lambda=0$, $i\in \{1,2\}$ and $h\in L^1_+(0,\infty)$. Suppose that $\lambda \in \rho(A^i_0)$. Then $u:=(\lambda-A^i_0)^{-1}h$ is given by
$$u(s)=\dfrac{1}{\gamma_i(s)}\int_0^s h(y)dy\geq 0, \quad \forall s\in[0,m].$$
So we get
\begin{eqnarray*}
\int_0^\infty u(s) ds&=&\int_0^\infty \dfrac{1}{\gamma_i(s)}\int_0^s h(y)dyds=\int_0^\infty h(y) \int_y^\infty \dfrac{1}{\gamma_i(s)}dsdy \\
&\geq& \dfrac{1}{\|\gamma_i\|_{L^\infty}}\int_0^\infty h(y)\int_y^\infty dsdy=\infty.
\end{eqnarray*}
Thus $u \notin L^1(0,\infty)$ and $0\in \sigma(A^i_0)$. Consequently
$$s(A)=\max\{s(A^1_0), s(A^2_0)\}=0.$$
\end{proof}
Now, define the operators
$$A^1_\mu u=-(\gamma_1 u)'-\mu u, \quad A^2_{c_2} u=-(\gamma_2 u)'-c_2 u,$$
for every $u\in D(A^1_{\mu})=D(A^2_{c_2})=\{u\in W^{1,1}(0,\infty): u(0)=0\}$. Since $\mu \geq 0$ then $s(A^1_{\mu})\leq 0.$ We give now more
information on the spectrum of $A^1_{\mu}.$ 

\begin{theorem}\label{Prop:Spec_Bmu}
We have
$$\left(-\infty,-\limsup_{x\to \infty}\mu(x)\right]\subset\sigma\left(A^1_{\mu}\right)$$
and
$$-\liminf_{x\to \infty}\mu(x) \geq s(A^1_{\mu})\geq -\limsup_{x\to \infty} \mu(x).$$
In particular
$$s\left(A^1_{\mu}\right)=\lim_{x\to \infty}\mu(x)$$
if the latter exists.
\end{theorem}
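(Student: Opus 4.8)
The plan is to reduce everything to an explicit formula for the resolvent $(\lambda - A^1_\mu)^{-1}$ acting on a positive datum, which we can write down exactly as in Lemma \ref{Lemma:Resolv_inf} but now carrying the extra absorption term $\mu$. First I would solve the ODE $\lambda u + (\gamma_1 u)' + \mu u = h$ with $u(0)=0$; integrating the factor one finds, for $h \in L^1_+(0,\infty)$,
\begin{equation*}
u(s) = \frac{1}{\gamma_1(s)} \int_0^s h(y)\exp\left(-\int_y^s \frac{\lambda+\mu(z)}{\gamma_1(z)}\,dz\right) dy,
\end{equation*}
and $\lambda \in \rho(A^1_\mu)$ iff this $u$ lies in $L^1(0,\infty)$ for every $h \in L^1$. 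The point is that this is exactly the formula for $(\lambda - A^1_0)^{-1}$ evaluated with $\lambda$ replaced by $\lambda + \mu(z)$ inside the integral, so monotonicity in the parameter is available: $u$ is nonincreasing as a function of $\lambda$, hence $\sigma(A^1_\mu)\cap\R = (-\infty, s(A^1_\mu)]$ just as in Theorem \ref{Prop:Spec_B}. It remains to pin down $s(A^1_\mu)$ by the two-sided estimate, and for this I would exploit the elementary bounds $\liminf_{x\to\infty}\mu(x) \le \mu(z) \le \limsup_{x\to\infty}\mu(x)$ valid eventually (i.e. outside a compact set), combined with the boundedness of $\gamma_1$ between $\gamma_0$ and $\|\gamma_1\|_{L^\infty}$.

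For the upper bound $s(A^1_\mu)\le -\liminf_{x\to\infty}\mu(x)$: fix $\lambda > -\liminf \mu$, pick $\eta$ with $\liminf \mu > -\lambda - \eta$... more directly, choose $N$ so large that $\mu(z) \ge -\lambda - \delta$ for $z \ge N$ where $\delta>0$ is chosen so that $\lambda + (\text{that liminf}) - \delta > 0$; then for $s$ large the exponential weight decays like $\exp(-(\lambda+\liminf\mu-\delta)(s-y)/\|\gamma_1\|_\infty)$ on the tail, which is integrable, giving $u \in L^1$ and hence $\lambda \in \rho(A^1_\mu)$. This shows $s(A^1_\mu) \le -\liminf_{x\to\infty}\mu(x)$.

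For the inclusion $(-\infty, -\limsup_{x\to\infty}\mu(x)] \subset \sigma(A^1_\mu)$ (which simultaneously yields $s(A^1_\mu) \ge -\limsup\mu$): fix $\lambda \le -\limsup\mu$; it suffices to exhibit $h \in L^1_+$ with $u \notin L^1$, or equivalently to show $0 \in \sigma$ of the shifted operator. I would take $h$ supported near $0$ (say $h = \chi_{[0,1]}$) and estimate $u(s)$ from below for large $s$ using $\mu(z) \le \limsup\mu + \varepsilon \le -\lambda + \varepsilon$ for $z$ large and $\gamma_1 \ge \gamma_0$: then the exponent is bounded below by $-\int_y^s (\varepsilon)/\gamma_0\,dz + C = -\varepsilon s/\gamma_0 + C'$, so $u(s) \ge c\, e^{-\varepsilon s/\gamma_0}/\|\gamma_1\|_\infty$, and since $\varepsilon>0$ is arbitrary this tail fails to be integrable when the net exponential rate is nonnegative — more carefully, one shows for every $\varepsilon$ the resolvent, if it existed, would have norm $\ge c/\varepsilon$, forcing $\lambda$ onto the boundary of the resolvent set, hence into $\sigma(A^1_\mu)$ since the real spectrum is the closed half-line $(-\infty, s(A^1_\mu)]$. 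The ``in particular'' statement is then immediate: if $\lim_{x\to\infty}\mu(x)$ exists, the liminf and limsup coincide and the two bounds squeeze $s(A^1_\mu)$ to $-\lim_{x\to\infty}\mu(x)$ (note there is an apparent sign typo in the displayed claim, which should read $s(A^1_\mu) = -\lim_{x\to\infty}\mu(x)$).

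The main obstacle I anticipate is the lower spectral inclusion: showing that the whole ray $(-\infty, -\limsup\mu]$ is in the spectrum, rather than merely that $s(A^1_\mu) \ge -\limsup\mu$, requires care because the resolvent formula is a genuine integral operator and one must rule out integrability of $u$ for a well-chosen $h$ uniformly in the choice of small $\varepsilon$; the monotonicity-in-$\lambda$ argument reduces this to checking a single endpoint $\lambda = -\limsup\mu$, which is where the estimate is tightest and where I would spend the most effort making the $\varepsilon \to 0$ limit rigorous.
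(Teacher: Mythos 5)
Your setup (the explicit formula $u(s)=\frac{1}{\gamma_1(s)}\int_0^s h(y)\exp\bigl(-\int_y^s\frac{\lambda+\mu(z)}{\gamma_1(z)}dz\bigr)dy$, monotonicity in $\lambda$, hence $\sigma(A^1_\mu)\cap\R=(-\infty,s(A^1_\mu)]$) and your proof of the upper bound $s(A^1_\mu)\le-\liminf_{x\to\infty}\mu(x)$ follow essentially the paper's route and are sound, modulo writing out the routine bound for the compact part $y\in[0,N]$ of the integral, which the paper does explicitly. You are also right that the displayed ``in particular'' contains a sign typo and should read $s(A^1_\mu)=-\lim_{x\to\infty}\mu(x)$.

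The genuine gap is in the spectral inclusion. For $\lambda\le-\limsup_{x\to\infty}\mu(x)$ you only use $\mu(z)\le\limsup\mu+\varepsilon\le-\lambda+\varepsilon$, which yields $u(s)\ge c(\varepsilon)\,e^{-\varepsilon s/\gamma_0}$; this is integrable for every $\varepsilon>0$, so by itself it proves nothing, and the proposed rescue (``the resolvent, if it existed, would have norm $\ge c/\varepsilon$'') fails because the constant is not uniform in $\varepsilon$: it carries the factor $\exp\bigl(-\int_y^{\bar s}\frac{\lambda+\mu(z)}{\gamma_1(z)}dz\bigr)$, where $\bar s=\bar s(\varepsilon)$ is the point beyond which $\mu\le\limsup\mu+\varepsilon$, and $\bar s(\varepsilon)\to\infty$ as $\varepsilon\to0$, so $c(\varepsilon)/\varepsilon$ need not blow up. You also have the reduction backwards: the half-line structure means the endpoint $\lambda=-\limsup\mu$ never needs to be examined directly. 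It suffices to prove $s(A^1_\mu)\ge-\limsup\mu$, i.e.\ to place every \emph{strictly} smaller $\lambda=-\limsup\mu-\varepsilon$ ($\varepsilon>0$) in the spectrum; then $(-\infty,-\limsup\mu]\subset(-\infty,s(A^1_\mu)]=\sigma(A^1_\mu)\cap\R$ and the inclusion (endpoint included) is automatic. For such $\lambda$ your own estimate, used with the correct comparison $\mu(z)\le\limsup\mu+\varepsilon/2$ for $z\ge\bar s$, gives $\lambda+\mu(z)\le-\varepsilon/2$ on the tail, so for $h\ge0$ with $\int_0^{\bar y}h>0$ (some $\bar y<\bar s$) the formula yields $u(s)\ge C\,e^{\varepsilon(s-\bar s)/(2\|\gamma_1\|_{L^\infty})}$ for $s\ge\bar s$ with $C>0$, which is not integrable, so $\lambda\in\sigma(A^1_\mu)$. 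This is exactly the paper's argument and closes the gap.
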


\begin{proof}
Let $\lambda \in \R$ and $h\in L^1(0,\infty)$. The solution of
$$\left(\lambda I-A^1_{\mu}\right)u=h, \qquad u(0)=0$$
(see Remark \ref{Rem:Abbreviations} for the abbreviation) is given by
\begin{equation}\label{Eq:Resolv_Amu}
u(s):=\dfrac{1}{\gamma_1(s)}\displaystyle \int_0^s h(y)\exp\left(-\int_y^s \dfrac{\lambda+\mu(z)}{\gamma_1(z)}dz\right)dy
\end{equation}
that is nonincreasing in $\lambda$, consequently
$$\sigma\left(A^1_{\mu}\right)\cap \R=\left(-\infty,s\left(A^1_{\mu}\right)\right].$$
Now, let $\varepsilon>0$ ($\varepsilon$ need not be small), $h\in L^1(0,\infty)$ and
$$\lambda:=-\liminf_{x\to \infty}\mu(x)+\varepsilon.$$
The solution of
$$\left(\lambda I-A^1_{\mu}\right)u=h, \qquad u(0)=0,$$
is given by \eqref{Eq:Resolv_Amu}. Then
\begin{eqnarray*}
&&\displaystyle \int_0^\infty |u(s)|ds \\
&\leq& \dfrac{1}{\gamma_0}\int_0^\infty |h(y)|\int_y^\infty \exp\left(-\int_y^s \dfrac{-\liminf_{x\to \infty}\mu(x)+\varepsilon+\mu(z)}{\gamma_1(z)}dz\right)dsdy.
\end{eqnarray*}
We know that there exists $\eta>0$ such that for every $y\geq \eta$ we have $\mu(y)\geq \liminf_{x\to \infty}\mu(x) -\varepsilon/2$. So we get first
\begin{equation*}
\begin{array}{rcl}
&& \displaystyle \int_{\eta}^\infty \dfrac{|h(y)|}{\gamma_0}\int_y^\infty \exp\left(-\int_y^s \dfrac{-\liminf_{x\to \infty}\mu(x) +\varepsilon+\mu(z)}{\gamma_1(z)}dz\right)dsdy \vspace{0.1cm} \\
&\leq& \displaystyle \int_{\eta}^\infty \dfrac{|h(y)|}{\gamma_0}\int_y^\infty \exp\left(-\int_y^s \dfrac{\varepsilon/2}{\|\gamma_1\|_{L^\infty}}\right)dsdy \vspace{0.1cm} \\
&\leq& \displaystyle \int_{\eta}^\infty \dfrac{|h(y)|}{\gamma_0}\int_y^\infty \exp\left(-\dfrac{\varepsilon(s-y)}{2\|\gamma_1\|_{L^\infty}}\right)dsdy \vspace{0.1cm} \\
&\leq& \displaystyle \dfrac{2 \|\gamma_1\|_{L^\infty} }{\varepsilon \gamma_0}\int_{\eta}^\infty |h(y)|dy<\infty.
\end{array}
\end{equation*}
Moreover, for every $y\in[0,\eta]$, we have
\begin{flalign*}
& \qquad \displaystyle\int_y^\infty \exp\left(-\int_y^s \dfrac{-\liminf_{x\to \infty}\mu(x) +\varepsilon+\mu(z)}{\gamma_1(z)}dz\right)ds \\
&\leq C_1\int_y^\infty \exp\left(-\int_0^s\dfrac{-\liminf_{x\to \infty}\mu(x) +\varepsilon+\mu(z)}{\gamma_1(z)}dz\right)ds\\
&\leq C_2\int_y^\infty \exp\left(-\int_0^s\dfrac{-\liminf_{x\to \infty}\mu(x) +\varepsilon+\mu(z)}{\gamma_1(z)}dz\right)ds,
\end{flalign*}
where
$$C_1:=\displaystyle \exp\left(\int_0^y \dfrac{|-\liminf_{x\to \infty}\mu(x) +\varepsilon+\mu(z)|}{\gamma_1(z)}dz\right)$$
and
$$C_2:=\displaystyle \exp\left(\dfrac{\eta(|\varepsilon-\liminf_{x\to \infty}\mu(x)|+\|\mu\|_{L^\infty})}{\gamma_0}\right)<\infty.$$
Note that, for every $y\in[0,\eta]$
\begin{equation*}
\begin{array}{rcl}
&&\displaystyle \int_y^\infty \exp\left(-\int_0^s\dfrac{-\liminf_{x\to \infty}\mu(x) +\varepsilon+\mu(z)}{\gamma_1(z)}dz\right)ds \\
&\leq&\displaystyle \int_{\eta}^\infty \exp\left(-\int_0^{\eta}\dfrac{-\liminf_{x\to \infty}\mu(x) +\varepsilon+\mu(z)}{\gamma_1(z)}dz\right)\exp\left(-\int_{\eta}^s\dfrac{\varepsilon/2}{\gamma_1(z)}dz\right)ds \\
&&+\displaystyle \int_0^\eta \exp\left(-\int_0^s \dfrac{-\liminf_{x\to \infty}\mu(x) +\varepsilon+\mu(z)}{\gamma_1(z)}dz\right)ds.
\end{array}
\end{equation*}
Consequently
$$\int_{0}^\eta \dfrac{|h(y)|}{\gamma_0}\int_y^\infty \exp\left(-\int_y^s \dfrac{-\liminf_{x\to \infty}\mu(x) +\varepsilon+\mu(z)}{\gamma_1(z)}dz\right)dsdy < \infty$$
and
$$\displaystyle \int_0^\infty |u(s)|ds<\infty$$
so $u\in L^1(0,\infty)$ and
$$-\liminf_{x\to \infty}\mu(x)+\varepsilon \in \rho(A^1_\mu)$$
for every $\varepsilon>0$ whence
$$s(A^1_\mu)\leq -\liminf_{x\to \infty}\mu(x).$$
Now let $\varepsilon>0$,  $h\in L^1_+(0,\infty)$ and
$$\lambda:=-\limsup_{x\to \infty}\mu(x) -\varepsilon.$$
Suppose that $\lambda\in \rho(A^1_{\mu})$, then $u:=(\lambda-A^1_\mu)^{-1}h$ is given by \eqref{Eq:Resolv_Amu}. We know that there exists $\overline{y}>0$ and $\overline{s}>\overline{y}$ such that
$$\int_0^{\overline{y}}h(z)dz>0$$
and
$$\mu(s)\leq \limsup_{x\to \infty}\mu(x)+\varepsilon/2$$
for every $s\geq \overline{s}$. Consequently we get
\begin{flalign*}
& \qquad \displaystyle \int_0^\infty u(s)ds\\
&= \displaystyle \int_0^{\infty} h(y)\int_{y}^\infty \dfrac{1}{\gamma_1(s)} \exp\left(-\int_{y}^s \left(\dfrac{\mu(z)-(\limsup_{x\to \infty}\mu(x)+\varepsilon)}{\gamma_1(z)}\right)dz\right)dsdy\\
&\geq \displaystyle \int_0^{\overline{y}} h(y)\int_{\overline{s}}^\infty \left[\dfrac{1}{\gamma_1(s)} \exp\left(-\int_{\overline{s}}^s \left(\dfrac{\mu(z)-(\limsup_{x\to \infty}\mu(x) +\varepsilon)}{\gamma_1(z)}\right)dz\right) \right.\\
&\quad \left.\exp\left(-\int_{y}^{\overline{s}} \left(\dfrac{\mu(z)-(\limsup_{x\to \infty}\mu(x) +\varepsilon)}{\gamma_1(z)}\right)dz\right) \right]ds dy \\
&\geq \displaystyle \int_0^{\overline{y}} \dfrac{h(y)}{\|\gamma_1\|_{L^\infty}}\int_{\overline{s}}^\infty  \left[\exp\left((y-\overline{s})\left(\dfrac{\|\mu\|_{L^\infty}+\limsup_{x\to \infty}\mu(x)+\varepsilon}{\gamma_0}\right)\right)\right. \\
&\quad \left. \exp\left(\dfrac{\varepsilon (s-\overline{s})}{2\|\gamma_1\|_{L^\infty}}\right)\right] ds dy
\end{flalign*}

\begin{flalign*}
&\geq \displaystyle \int_0^{\overline{y}} \dfrac{h(y)}{\|\gamma_1\|_{L^\infty}}dy\int_{\overline{s}}^\infty \left[\exp\left(-\overline{s}\left(\dfrac{\|\mu\|_{L^\infty}+\limsup_{x\to \infty}\mu(x)+\varepsilon}{\gamma_0}\right) \right) \right. \\
&\quad \left. \exp\left(\dfrac{\varepsilon (s-\overline{s})}{2\|\gamma_1\|_{L^\infty}}\right)\right]ds \\
&=\infty
\end{flalign*}
so $u\notin L^1(0,\infty)$ and
$$-\limsup_{x\to\infty}\mu(x)-\varepsilon\in \sigma(A^1_\mu)$$
for every $\varepsilon>0$ whence
$$s(A^1_\mu)\geq -\limsup_{x\to \infty}\mu(x).$$
\end{proof}

\begin{remark}\label{Rem:Estimates}
Note that similar estimates hold for $A^2_{c_2}$.
\end{remark}

\subsubsection{Spectral theory of coupled systems}

Define the operator
$$A^1_{\mu+c_1} u=-(\gamma_1 u)'-(\mu+c_1)u, \qquad $$
with $D(A^1_{\mu+c_1})=\{u\in W^{1,1}(0,\infty): u(0)=0\}$. Let $H:=(h_1,h_2)\in \X$ and $\lambda\in \R$. The system 
\begin{equation}\label{Eq:System_Coupled}
\left\{
\begin{array}{rcl}
\lambda u_1+(\gamma_1 u_1)'+(\mu+c_1)u_1 -c_2 u_2&=&h_1, \\
\lambda u_2+(\gamma_2 u_2)'+(c_2)u_2-c_1 u_1&=&h_2, \\
u_1(0)&=&u_2(0)=0,
\end{array}
\right.
\end{equation}
can be globally solved by iterations, since it is a perturbed linear Cauchy problem, by writing
\begin{equation*}
\left\{
\begin{array}{rcl}
\lambda u_1+(\gamma_1 u_1)'+(\mu+c_1)u_1&=&c_2 u_2+h_1, \\
\lambda u_2+(\gamma_2 u_2)'+(c_2)u_2&=&c_1 u_1+h_2, \\
u_1(0)&=&u_2(0)=0.
\end{array}
\right.
\end{equation*}
Since $B_2$ is a positive operator then, once $H\in \X_+$, the iterative sequence
\begin{equation*}
\left\{
\begin{array}{rcl}
\lambda u_1^{n+1}+(\gamma_1 u_1^{n+1})'+(\mu+c_1)u_1^{n+1}&=&c_2 u_2^n+h_1, \\
\lambda u_2^{n+1}+(\gamma_2 u_2^{n+1})'+(c_2)u_2^{n+1}&=&c_1 u_1^n+h_2, \\
u_1^{n+1}(0)&=&u_2^{n+1}(0)=0.
\end{array}
\right.
\end{equation*}
(with $u_1^0=u_2^0=0$) is \textit{nonnegative} and then so is its limit. In addition
\begin{equation*}
\begin{pmatrix}
u_1^{n+1} \\
u_2^{n+1}
\end{pmatrix}(s)=\begin{pmatrix}
\dfrac{1}{\gamma_1(s)}\displaystyle \int_0^s \left[h_1(y)+c_2(y)u_2^n(y)\right]e^{-\int_y^s\left(\frac{\lambda+\mu(z)+c_1(z)}{\gamma_1(z)}\right)dz}dy \\
\dfrac{1}{\gamma_2(s)}\displaystyle \int_0^s \left[h_2(y)+c_1(y)u_1^n(y)\right]e^{-\int_y^s\left(\frac{\lambda+c_2(z)}{\gamma_2(z)}\right)dz}dy
\end{pmatrix} \qquad \forall s\geq 0
\end{equation*}
shows by induction that the sequences $u_1^n$ and $u_2^n$ are nonincreasing in $\lambda$. In all the following, we will write symbolically $(\lambda -\B)U=H$ instead of \eqref{Eq:System_Coupled}, even if $U\not\in D(\B)$. Finally, the solution of \eqref{Eq:System_Coupled} always satisfies the Duhamel equation
\begin{equation}\label{Eq:Resolv_Couple}
\begin{pmatrix}
u_1 \\
u_2
\end{pmatrix}(s)=\begin{pmatrix}
\dfrac{1}{\gamma_1(s)}\displaystyle \int_0^s \left[h_1(y)+c_2(y)u_2(y)\right]e^{-\int_y^s\left(\frac{\lambda+\mu(z)+c_1(z)}{\gamma_1(z)}\right)dz}dy \\
\dfrac{1}{\gamma_2(s)}\displaystyle \int_0^s \left[h_2(y)+c_1(y)u_1(y)\right]e^{-\int_y^s\left(\frac{\lambda+c_2(z)}{\gamma_2(z)}\right)dz}dy
\end{pmatrix} \qquad \forall s\geq 0
\end{equation}
and is \textit{nonincreasing} in $\lambda$. Thus, if $\alpha<\lambda$ then $U_{\lambda}\not\in \X \Rightarrow U_{\alpha}\not\in \X$, so
$$\sigma(\B)\cap \R=(-\infty,s(\B)].$$

\begin{remark}\label{Rem:ess-type}
Note that $\omega_{\ess}(\B)=\omega_0(\B)$. Indeed, if $\omega_{\ess}(\B)<\omega_0(\B)$ then for any $\alpha$ such that $\omega_{\ess}(\B)<\alpha<\omega_0(\B)=s(\B)$, the set $\sigma(\B)\cap\{\lambda: \Re(\lambda)\geq \alpha\}$ should consist of a finite set of eigenvalues and this contradicts the fact that $\sigma(\B)\cap \R=(-\infty,s(\B)]$. It follows from the proof of Theorem \ref{Thm:AEG_inf} that $\{T_{\A}(t)\}_{t\geq 0}$ has a spectral gap (i.e. $\omega_{\ess}(\A)<\omega_0(\A)$) if and only if $s(\A)>s(\B)$.
\end{remark}

\begin{theorem}\label{Thm:Spec_Dtilde}
We have
$$-\limsup_{x\to \infty}\mu(x)\leq s(\B)\leq 0$$
and in particular
$$\left(-\infty,-\limsup_{x\to \infty}\mu(x)\right]\subset \sigma(\B).$$
Moreover, if $\liminf_{x\to \infty}\mu(x)>0$ and $\liminf_{x\to \infty}c_2(x)>0$ then
$$s(\B)<0.$$
\end{theorem}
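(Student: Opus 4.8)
The plan is to establish the three assertions in turn, using freely that (by the monotonicity in $\lambda$ of the solution formula \eqref{Eq:Resolv_Couple}, already recorded above) $\sigma(\B)\cap\R=(-\infty,s(\B)]$, and that $s(\B)=\omega_0(\B)$ since $\B=A+B_1+B_2$ generates a positive $C_0$-semigroup on the $L^1$-space $\X$. For the upper bound $s(\B)\le 0$ I would simply repeat the dissipativity computation of Theorem~\ref{Thm:Generation_inf}, now for $\B$ rather than $A$: for $\lambda>0$, $U=(u_1,u_2)\in D(\B)$ and $H=(\lambda-\B)U$, multiply the first line of $(\lambda-\B)U=H$ by $\sign(u_1)$ and the second by $\sign(u_2)$, integrate over $(0,\infty)$ and add. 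As in Theorem~\ref{Thm:Generation_inf} the transport contributions $\int_0^\infty(\gamma_i u_i)'\sign(u_i)$ vanish; the terms $\int(\mu+c_1)|u_1|$ and $\int c_2|u_2|$ are nonnegative, and the coupling contributions, bounded via $u_j\sign(u_i)\le|u_j|$, cancel the $\int c_1|u_1|$ and $\int c_2|u_2|$ produced by the diagonal part, leaving $\lambda\|U\|_\X+\int_0^\infty\mu|u_1|\le\|H\|_\X$. Thus $\B$ is dissipative, $\omega_0(\B)\le 0$, hence $s(\B)\le 0$.

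For the lower bound $-\limsup_{x\to\infty}\mu(x)\le s(\B)$ I would run a resolvent blow-up in the spirit of Theorem~\ref{Prop:Spec_Bmu}, but with the forcing term placed far out so that the solution only meets values of $\mu$ close to its $\limsup$. Fix $\varepsilon>0$ and $\eta>0$ with $\mu(s)\le\limsup_{x\to\infty}\mu(x)+\varepsilon=:M'$ a.e. on $[\eta,\infty)$, pick $0\ne h_1\in L^1_+(0,\infty)$ supported in $[\eta,\eta+1]$, and put $H=(h_1,0)\in\X_+$. Suppose for contradiction that some $\lambda<-M'$ belongs to $\rho(\B)$; by the interval structure $\lambda>s(\B)$, so $U=(\lambda-\B)^{-1}H$ is the nonnegative element of $D(\B)$ given by \eqref{Eq:Resolv_Couple}, and inspecting the monotone iteration that defines it shows $u_1,u_2$ are supported in $[\eta,\infty)$. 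Adding the two lines of \eqref{Eq:System_Coupled}, integrating over $(0,R)$, using $u_i(0)=0$ and letting $R\to\infty$ (the boundary terms $\gamma_i(R)u_i(R)\to 0$ because $U\in W^{1,1}(0,\infty)\times W^{1,1}(0,\infty)$) gives $\lambda\|U\|_\X+\int_\eta^\infty\mu u_1=\|h_1\|_{L^1}$. But $\int_\eta^\infty\mu u_1\le M'\|u_1\|_{L^1}\le M'\|U\|_\X$, so $0<\|h_1\|_{L^1}\le(\lambda+M')\|U\|_\X<0$, a contradiction. Hence $(-\infty,-M')\subset\sigma(\B)$ for every $\varepsilon>0$; letting $\varepsilon\downarrow 0$ and using that $\sigma(\B)$ is closed yields $(-\infty,-\limsup_{x\to\infty}\mu(x)]\subset\sigma(\B)$, and in particular $s(\B)\ge-\limsup_{x\to\infty}\mu(x)$.

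For the strict negativity under $\liminf_{x\to\infty}\mu(x)>0$ and $\liminf_{x\to\infty}c_2(x)>0$: since $\sigma(\B)\cap\R=(-\infty,s(\B)]$ and $s(\B)\le 0$, it is enough to show $0\in\rho(\B)$, i.e. that the linear operator $R$ defined by \eqref{Eq:Resolv_Couple} at $\lambda=0$ is bounded on $\X$ (it then maps into $D(\B)$ and inverts $-\B$). For $H=(h_1,h_2)\in\X_+$ let $U=(u_1,u_2)=RH\ge 0$, the monotone limit of the iteration. Summing the two equations, $\chi:=\gamma_1u_1+\gamma_2u_2$ satisfies $\chi'=h_1+h_2-\mu u_1\le h_1+h_2$ with $\chi(0)=0$, so $\chi(s)\le\|H\|_\X$ for all $s$, which gives both the pointwise bound $u_1(s)+u_2(s)\le\|H\|_\X/\gamma_0$ and, by integrating $\chi'$, $\int_0^\infty\mu u_1\le\|H\|_\X$. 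Choosing $\eta,\mu_0,\nu_0>0$ with $\mu\ge\mu_0$ and $c_2\ge\nu_0$ a.e. on $[\eta,\infty)$, the first bound gives $\int_0^\eta(u_1+u_2)\le(\eta/\gamma_0)\|H\|_\X$ and the second gives $\int_\eta^\infty u_1\le\mu_0^{-1}\|H\|_\X$, hence $\|u_1\|_{L^1}\le C\|H\|_\X$; integrating the second equation over $(0,\infty)$ then gives $\int_0^\infty c_2u_2\le\|h_2\|_{L^1}+\|c_1\|_{L^\infty}\|u_1\|_{L^1}\le C'\|H\|_\X$, so $\int_\eta^\infty u_2\le\nu_0^{-1}C'\|H\|_\X$. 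Adding the four pieces, $\|U\|_\X\le C''\|H\|_\X$; decomposing a general $H$ into its positive parts shows $R\in\L(\X)$, so $0\in\rho(\B)$ and therefore $s(\B)<0$.

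The delicate point is this last step: near the origin no decay of the coefficients is postulated, so the $L^1$-control there cannot be extracted from the loss terms and must instead come from the ``characteristic'' inequality $\gamma_1u_1+\gamma_2u_2\le\|H\|_\X$, while at infinity one has to use the two hypotheses in tandem --- $\liminf\mu>0$ to tame $u_1$ through $\int\mu u_1$, and then $\liminf c_2>0$ to bootstrap control of $u_2$ from $\int c_2u_2$. In the lower-bound step the only subtlety is to force far from the origin rather than near it, which is precisely what makes $\limsup_{x\to\infty}\mu$ (and not $\|\mu\|_{L^\infty}$) the relevant quantity.
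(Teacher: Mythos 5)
Your proof is correct, and its engine is the same as the paper's: add the two lines of \eqref{Eq:System_Coupled} to get the balance identity \eqref{Eq:Sum} and let the behaviour of $\mu$ and $c_2$ at infinity decide integrability; the differences lie in how each of the three steps is run. For $s(\B)\le 0$ the paper checks directly that the formal solution is in $L^1$ for every $\lambda>0$, while you prove dissipativity of $\B$ and invoke Lumer--Phillips (legitimate here, since $\B$ is already known to generate a $C_0$-semigroup, so the range condition is free) --- a cosmetic difference. For the lower bound the paper integrates the summed equation over $(\eta,\infty)$ for an arbitrary nonnegative $h$ with mass beyond $\eta$, discarding the boundary term at $\eta$; you instead take $h_1$ supported in $[\eta,\eta+1]$ and integrate from $0$, which requires the support-propagation property of $(\lambda-\B)^{-1}$ (correct, via the Voigt series \eqref{Eq:Voigt_Sum} or the monotone iteration), so it is the same estimate with a little extra machinery. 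The genuinely different step is the strict negativity: the paper gets $u_1\in L^1$ from $\int_0^\infty \mu u_1\le\|h\|_{L^1}$ and then handles $u_2$ by viewing the second equation as $(0-A^2_{c_2})u_2=h_2+c_1u_1$ and invoking $s(A^2_{c_2})\le-\liminf_{x\to\infty}c_2(x)<0$ (Remark \ref{Rem:Estimates}); you bypass that auxiliary spectral input by integrating the second equation to bound $\int_0^\infty c_2u_2$ and using $\liminf c_2>0$ together with the pointwise bound $\gamma_1u_1+\gamma_2u_2\le\|H\|_\X$ on $[0,\eta]$, which is more self-contained and even yields an explicit bound on $(-\B)^{-1}$. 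Two small points, at the same level of informality as the paper itself: the manipulations on the monotone limit at $\lambda=0$ are cleanest if performed on the iterates (monotonicity makes the index offset harmless) and then passed to the limit, and the conclusion $0\in\rho(\B)$ also uses injectivity of $\B$ at $\lambda=0$, which follows from Gronwall uniqueness for \eqref{Eq:System_Coupled} with zero data --- the paper is equally silent on both.
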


\begin{proof}
Let $\lambda>0$, $H:=(h_1,h_2)\in L^1(0,\infty)\times L^1(0,\infty)$. The solution $U:=(u_1,u_2)$ of
$$(\lambda I-\B)U=H, \quad U(0)=(0,0)$$
is given by \eqref{Eq:Resolv_Couple} and satisfies
\begin{equation}\label{Eq:Sys_SpecDtilde}
\left\{
\begin{array}{rcl}
(\gamma_1 u_1)'+(\lambda+c_1+\mu)u_1-c_2 u_2&=&h_1, \\
(\gamma_2 u_2)'+(\lambda+c_2)u_2-c_1u_1&=&h_2.
\end{array}
\right.
\end{equation}
By adding, we get
\begin{equation}\label{Eq:Sum}(\gamma_1u_1)'+(\gamma_2u_2)'+\lambda(u_1+u_2)+\mu u_1=h_1+h_2=:h.
\end{equation}
We know that the resolvent of $\B$ is a positive operator, so it suffices to take $(h_1,h_2)\in \X_+$. Then $u_1$ and $u_2$ are nonnegative functions and an integration of the latter equation leads to
$$\gamma_1(m)u_1(m)+\gamma_2(m)u_2(m)+\lambda \int_0^m (u_1(s)+u_2(s))ds+\int_0^m \mu(s)u_1(s)ds=\int_0^m h(s)ds$$
for every $m>0$. Consequently
$$\lambda \int_0^m (u_1(s)+u_2(s))ds\leq \int_0^m h(s)ds$$
and
$$\lambda \int_0^\infty (u_1(s)+u_2(s))ds\leq \|h\|_{L^1}<\infty$$
by passing to the limit, whence
$$u_1+u_2\in L^1(0,\infty)$$ 
so $u_1\in L^1(0,\infty)$ and $u_2\in L^1(0,\infty)$. Thus $\lambda \in \rho(\B)$ for every $\lambda>0$ and 
$$s(\B)\leq 0.$$
Now let $H:=(h_1,h_2)\in \X_+$ and $\lambda:=-\limsup_{x\to \infty}\mu(x)-\varepsilon$, with $\varepsilon>0$. We know that there exists $\eta>0$ such that
$$\mu(x)\leq \limsup_{x\to \infty}\mu(x)+\varepsilon/2, \quad \forall x\geq \eta,$$
so
$$\lambda+\mu(x)\leq -\varepsilon/2<0, \quad \forall x\geq \eta.$$
Suppose that $\lambda \in \rho(\B)$, then an integration of \eqref{Eq:Sum} between $\eta$ and $\infty$ implies that
$$0\geq -\gamma_1(\eta)u_1(\eta)-\gamma_2(\eta)u_2(\eta)+\int_{\eta}^\infty (\lambda+\mu(s))(u_1(s)+u_2(s))ds\geq \int_{\eta}^\infty h(s)ds.$$
Taking $h\in L^1(0,\infty)$ such that $\int_{\eta}^\infty h(s)ds>0$ would lead to a contradiction. Thus
$$-\limsup_{x\to \infty}\mu(x)-\varepsilon \in \sigma(\B)$$
for every $\varepsilon>0$ and 
$$s(\B)\geq -\limsup_{x\to \infty}\mu(x).$$
Finally, suppose that $\liminf_{x\to \infty}\mu(x)>0$ and $\liminf_{x\to \infty}c_2(x)>0 $. Let $\varepsilon>0$, then there exists $\eta>0$ such that
$$\mu(x)\geq \varepsilon/2, \quad \forall x\geq \eta.$$ Let $\lambda=0$ and $H:=(h_1,h_2)\in \X_+$. The solution of
$$(\lambda I-\B)U=H, \quad U(0)=(0,0)$$
satisfies \eqref{Eq:Sum} and an integration lead to
$$\gamma_1(m)u_1(m)+\gamma_2(m)u_2(m)+\int_0^m \mu(s)u_1(s)ds=\int_0^m h(s)ds$$
whence
$$\int_0^\infty \mu(s)u_1(s)ds\leq \|h\|_{L^1}<\infty.$$
Consequently
$$\int_{\eta}^\infty u_1(s)ds<\infty$$
and $u_1\in L^1(0,\infty)$. The second equation of \eqref{Eq:Sys_SpecDtilde} implies that
$$(\lambda-A^2_{c_2})u_2=h_2+c_1u_1\in L^1(0,\infty).$$
By Remark \ref{Rem:Estimates}, we have $s(A^2_{c_2})<0$, so $0\in \rho(A^2_{c_2})$ and $u_2\in D(A^2_{c_2})\subset L^1(0,\infty)$. Consequently 
$$U\in D(\B)$$
so $0\in \rho(\B)$
and 
$$s\left(\B\right)<0.$$
\end{proof}

\begin{remark}\label{Rem:Lods}
We suspect that the spectra of $A^1_{\mu}, A^2_{c_2}$ and $\B$ are invariant by translation along the imaginary axis (and therefore are half-spaces), in the spirit of \cite{Lods2009}. We conjecture also that their spectrum consist of essential spectrum only.
\end{remark}

Under suitable assumptions, we can compute $s(\B)$.
\begin{theorem}\label{Thm:SpecBound_Pol}
Suppose that the limits
$$l_{\mu}:=\lim_{x\to \infty}\mu(x), \qquad l_1:=\lim_{x\to \infty}c_1(x)$$
exist and that $c_2\in \R_+$. Then 
$$s(\B)=\dfrac{-(l_1+c_2+l_{\mu})+\sqrt{(l_1+c_2+l_\mu)^2-4l_\mu c_2}}{2}.$$
\end{theorem}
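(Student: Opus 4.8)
The plan is to apply Voigt's formula (Lemma \ref{Lemma:Voigt}) to the splitting $\B=(A+B_1)+B_2$, in which $A+B_1=\mathrm{diag}(A^1_{\mu+c_1},A^2_{c_2})$ is resolvent positive and $B_2\ge 0$ is the off-diagonal coupling. Since the limits exist, Theorem \ref{Prop:Spec_Bmu} applied with $\mu$ replaced by $\mu+c_1$, together with Remark \ref{Rem:Estimates}, gives $s(A^1_{\mu+c_1})=-(l_\mu+l_1)$ and $s(A^2_{c_2})=-c_2$, hence $s(A+B_1)=\max\{-(l_\mu+l_1),-c_2\}$. By Lemma \ref{Lemma:Voigt}, $s(\B)=\inf\{\lambda>s(A+B_1):r_\sigma(B_2(\lambda-(A+B_1))^{-1})<1\}$, so everything reduces to that spectral radius. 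Writing $R_1(\lambda)=(\lambda-A^1_{\mu+c_1})^{-1}$ and $R_2(\lambda)=(\lambda-A^2_{c_2})^{-1}$ and using that $c_2$ is a scalar constant, the off-diagonal shape of $B_2$ gives $(B_2(\lambda-(A+B_1))^{-1})^2=\mathrm{diag}(c_2R_2(\lambda)c_1R_1(\lambda),\,c_1R_1(\lambda)c_2R_2(\lambda))$; since $r_\sigma(XY)=r_\sigma(YX)$ and $r_\sigma(S^2)=r_\sigma(S)^2$, this yields $r_\sigma(B_2(\lambda-(A+B_1))^{-1})=\sqrt{c_2\,r_\sigma(c_1R_1(\lambda)R_2(\lambda))}$.

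The heart of the proof is the identity
$$r_\sigma\big(c_1R_1(\lambda)R_2(\lambda)\big)=\frac{l_1}{(\lambda+l_\mu+l_1)(\lambda+c_2)},\qquad \lambda>\max\{-(l_\mu+l_1),-c_2\},$$
which I would establish by a two-sided ``frozen coefficient'' comparison. The operators $R_1(\lambda),R_2(\lambda)$ are positive causal integral operators on $L^1(0,\infty)$ with kernels read off from Lemma \ref{Lemma:Resolv_inf}; since $\gamma_1$ is bounded, a direct primitive computation shows that when the coefficient $a>0$ in such a resolvent is \emph{constant}, its column sum $\int_y^\infty(\text{kernel})(s,y)\,ds$ equals exactly $1/a$ for every $y$. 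Given $\varepsilon>0$ small, choose $N$ with $c_1\in[l_1-\varepsilon,l_1+\varepsilon]$ and $\mu+c_1\in[l_\mu+l_1-\varepsilon,l_\mu+l_1+\varepsilon]$ on $[N,\infty)$. Bounding $\mu+c_1$ below by the step function $(l_\mu+l_1-\varepsilon)\mathbf 1_{[N,\infty)}$ (using $\mu,c_1\ge0$) and $c_1$ above by a bounded step function, monotonicity of resolvents in the coefficient gives $c_1R_1(\lambda)R_2(\lambda)\le T_\varepsilon^{+}$, a positive causal kernel operator whose column sum equals the constant $\rho_\varepsilon^{+}:=\frac{l_1+\varepsilon}{(\lambda+l_\mu+l_1-\varepsilon)(\lambda+c_2)}$ on $[N,\infty)$ and is bounded on $[0,N)$; restricting instead to the invariant subspace $L^1(N,\infty)$ and bounding the other way gives $r_\sigma(c_1R_1(\lambda)R_2(\lambda))\ge r_\sigma(T_\varepsilon^{-})$, where $T_\varepsilon^{-}$ on $L^1(N,\infty)$ has constant column sum $\rho_\varepsilon^{-}:=\frac{l_1-\varepsilon}{(\lambda+l_\mu+l_1+\varepsilon)(\lambda+c_2)}$. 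A positive causal kernel operator on $L^1$ with constant column sum $\rho$ has $\|T^n\|=\rho^n$ (induction on iterated kernels), hence $r_\sigma=\rho$; and for $T_\varepsilon^{+}$ the same bookkeeping, combined with quasinilpotence of the finite-interval corner $T_\varepsilon^{+}|_{L^1(0,N)}$ (a bounded-kernel causal operator, as in Lemma \ref{Lemma:Volterra}), yields $\|(T_\varepsilon^{+})^n\|\le C(\rho_\varepsilon^{+})^n+(\text{superexponentially small})$ and thus $r_\sigma(T_\varepsilon^{+})\le\rho_\varepsilon^{+}$. Letting $\varepsilon\to0$ gives the identity. (If $c_2=0$ then already $(B_2(\lambda-(A+B_1))^{-1})^2=0$; if $l_1=0$ the upper bound forces $r_\sigma=0$; in both degenerate cases $s(\B)=s(A+B_1)$, consistent with the formula.)

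To conclude: $r_\sigma(B_2(\lambda-(A+B_1))^{-1})<1$ if and only if $q(\lambda):=\lambda^2+(l_1+c_2+l_\mu)\lambda+l_\mu c_2>0$. The discriminant is $l_1^2+2l_1(c_2+l_\mu)+(c_2-l_\mu)^2\ge0$, so $q$ has real roots, and $q(-(l_\mu+l_1))=q(-c_2)=-c_2l_1\le0$ shows that both $-(l_\mu+l_1)$ and $-c_2$ lie between them; hence on $(\max\{-(l_\mu+l_1),-c_2\},\infty)$ one has $q<0$ on $(\max\{-(l_\mu+l_1),-c_2\},\Lambda)$ and $q>0$ on $(\Lambda,\infty)$, where $\Lambda$ is the larger root of $q$. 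Therefore the infimum in Voigt's formula is $\Lambda$, i.e.
$$s(\B)=\frac{-(l_1+c_2+l_\mu)+\sqrt{(l_1+c_2+l_\mu)^2-4l_\mu c_2}}{2},$$
which is the asserted value. The main obstacle is the identity above: because $\mu,c_1$ are only asymptotically constant and $\gamma_1,\gamma_2$ never are, one cannot literally freeze coefficients — the real content is that the spectral radius of these positive causal operators on $L^1(0,\infty)$ is insensitive both to $\gamma_1,\gamma_2$ (via the exact primitive giving column sum $1/a$) and to the data on any bounded set (via quasinilpotence of the corner).
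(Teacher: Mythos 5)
Your proposal is correct in substance but takes a genuinely different route from the paper's. You split $\B=(A+B_1)+B_2$ and invoke Voigt's formula (Lemma \ref{Lemma:Voigt}), reducing the theorem to the scalar spectral-radius identity $r_\sigma\bigl(c_1R_1(\lambda)R_2(\lambda)\bigr)=l_1/\bigl((\lambda+l_\mu+l_1)(\lambda+c_2)\bigr)$, which you then obtain by a two-sided tail-frozen-coefficient comparison: exact column sums of the causal resolvent kernels (note the exact value is $1/(\lambda+a)$, the spectral parameter included, and the exactness rests on $\int^{\infty}dz/\gamma_1(z)=\infty$, available because $\gamma_1\in W^{1,\infty}$ is bounded above), domination of positive operators on $L^1$, invariance of $L^1(N,\infty)$ under the causal operators, and quasinilpotence of the bounded-kernel corner on $(0,N)$ as in Lemma \ref{Lemma:Volterra}; one must also keep $\varepsilon<\lambda+l_\mu+l_1$ so the tail-frozen majorant stays bounded, but these are matters of careful writing, not gaps. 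The paper instead works directly on the resolvent system \eqref{Eq:Sys_SpecDtilde}: it multiplies the first equation by $\lambda+c_2$ and the second by $c_2$ (the same elimination reused in Theorem \ref{Thm:Partic_Case}), integrates, and exploits the asymptotic sign of the coefficient $\lambda^2+\lambda(c_1+c_2+\mu)+\mu c_2$ together with the half-line structure $\sigma(\B)\cap\R=(-\infty,s(\B)]$ to show $\lambda^*+\varepsilon\in\rho(\B)$ and $\lambda^*-\varepsilon\in\sigma(\B)$, treating the degenerate cases $l_\mu=0$, $c_2=0$, $l_1=0$ via Theorem \ref{Thm:Spec_Dtilde} and \eqref{Eq:Bound_B}. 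The paper's argument is shorter and more elementary (only integration and sign bookkeeping on the ODE system); yours is more structural: it explains the closed-form answer as the larger root of the frozen quadratic $q(\lambda)=(\lambda+l_\mu+l_1)(\lambda+c_2)-c_2l_1$, handles all degenerate cases uniformly, and makes explicit where constancy of $c_2$ enters (the scalar factoring of the square of the off-diagonal operator), at the price of the technical spectral-radius identity, whose sketched proof is sound but is precisely the part that would need to be written out in full detail.
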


\begin{proof}
If $l_{\mu}=0$, then it is clear, with Theorem \ref{Thm:Spec_Dtilde}, that $s(\B)=0$.
If $c_2=0$, then $s(A^2_{c_2})=0$
by Remark \ref{Rem:Estimates}. Since $B_2$ is a positive operator, we readily see that
\begin{equation}\label{Eq:Bound_B}
s(\B)\geq s(A+B_1)=\max\{s(A^1_{\mu+c_1}),s(A^2_{c_2})\}.
\end{equation}
Consequently $s(\B)\geq 0$ and the equality holds by Theorem \ref{Thm:Spec_Dtilde}. Suppose now that
$$c_2>0, \qquad l_{\mu}>0.$$
Define the second order polynomial function
$$P:\lambda \mapsto \lambda^2 +\lambda (l_1+c_2+l_\mu)+l_\mu c_2$$
whose discriminant is
$$\Delta=l_1^2+2l_1c_2+2l_\mu l_1+(c_2-l_\mu)^2\geq 0$$
and let
$$\lambda^*:=\dfrac{-(l_1+c_2+l_{\mu})+\sqrt{(l_1+c_2+l_\mu)^2-4l_\mu c_2}}{2}<0.$$
We know by Theorem \ref{Thm:Spec_Dtilde} that $$s(\B)<0$$
since $c_2>0$ and $l_{\mu}>0$. Let $\varepsilon\in(0,-\lambda^*)$, $\lambda:=\lambda^*+\varepsilon<0$ and $(h_1,h_2)\in \X_+$. The solution $U:=(u_1,u_2)$ of
$$(\lambda I-\B)U=H, \qquad U(0)=(0,0)$$ satisfies \eqref{Eq:Sys_SpecDtilde}. We multiply the first equation by $(\lambda+c_2)$ and the second one by $c_2$, then we do the sum of both equations. We obtain:
\begin{equation}\label{Eq:Sum_Pol}
(\lambda+c_2)(\gamma_1 u_1)'+c_2(\gamma_2 u_2)'+[\lambda^2+\lambda(c_1+c_2+\mu)+\mu c_2]u_1=(\lambda+c_2)h_1+c_2h_2=:h
\end{equation}
where $h\in L^1(0,\infty)$. By assumptions made on $c_1$ and $\mu$, we know that for every $\eta>0$, there exists $\delta>0$ such that
$$|\mu(s)-l_\mu|\leq \eta, \qquad |c_1(s)-l_1|\leq \eta, \qquad \forall s\geq \delta.$$
Moreover, we have
\begin{eqnarray*}
&&\lambda^2 +\lambda(c_1(s)+c_2(s)+\mu(s))+\mu(s)c_2(s)\\
&\geq&(\lambda^*+\varepsilon)^2+(\lambda^*+\varepsilon)(l_1+c_2+l_\mu+2\eta)+c_2(l_\mu-\eta)\\
&=& \varepsilon^2+2\varepsilon \lambda^*+2\eta \lambda^*+\varepsilon(l_1+c_2+l_\mu+2\eta)-\eta c_2\\
&=& \varepsilon[2\lambda^*+(l_1+c_2+l_\mu)]+\varepsilon^2+2\lambda^* \eta+2\varepsilon \eta -\eta c_2 \\
&\geq& \varepsilon^2+2\lambda^* \eta+2\varepsilon \eta -\eta c_2=:C(\eta)
\end{eqnarray*}
for every $s\geq \delta$, since $P(\lambda^*)=0$ and
$$2\lambda^*\geq -(l_1+c_2+l_\mu).$$
We see that $C(0)=\varepsilon^2>0$. Since $C$ is a continuous function, then we can find $\eta^*>0$ small enough such that $C(\eta^*)>0$. Thus there exists $\delta>0$ such that for every $s\geq \delta$, we have
$$\lambda^2 +\lambda(c_1(s)+c_2(s)+\mu(s))+\mu(s)c_2(s)\geq C(\eta^*)>0.$$
An integration of \eqref{Eq:Sum_Pol} and some lower bounds lead to
$$(\lambda+c_2)\int_{\delta}^m (\gamma_1 u_1)'(s)ds+c_2\int_{\delta}^m (\gamma_2 u_2)'(s)ds+C(\eta^*)\int_{\delta}^m u_1(s)ds\leq \int_{\delta}^m h(s)ds$$
for every $m>\delta$. Consequently
$$C(\eta^*)\int_{\delta}^\infty u_1(s)ds\leq \|h\|_{L^1(0,\infty)}+(\lambda+c_2)\gamma_1(\delta)u_1(\delta)+c_2 \gamma_2(\delta)u_2(\delta)<\infty.$$
Finally $u_1\in L^1(0,\infty)$ and, using the second equation of \eqref{Eq:Sys_SpecDtilde},  we get $u_2\in L^1(0,\infty)$. Consequently  we have
$$\lambda^*+\varepsilon\in \rho(\B)$$
for every $\varepsilon>0$, so
$$s(\B)\leq \lambda^*.$$
If $l_1=0$, then we have
$$\lambda^*=\max\{-l_{\mu},-c_2\}$$
and
$$\max\{s(A^1_{\mu+c_1}), s(A^2_{c_2})\}=\max\{-l_{\mu},-c_2\},$$
by using Theorem \ref{Prop:Spec_Bmu} and Remark \ref{Rem:Estimates}. Consequently, using \eqref{Eq:Bound_B}, we get
$$s(\B)\geq \lambda^*$$
and the equality holds. Suppose in the following that
$$l_1>0.$$
We see that
$$P(-c_2)=-l_1c_2<0$$
so we have 
$$\lambda^*>-c_2.$$
Let $H\in \X_+$, $\lambda:=\lambda^*-\varepsilon<0$, with $\varepsilon>0$ small enough such that $\lambda>-c_2$ (which is possible since $\lambda^*>-c_2$). Suppose that $\lambda\in \rho(\B)$, then $U:=(\lambda I-\B)^{-1}H=(u_1,u_2)$ satisfies \eqref{Eq:Sum_Pol}. By assumptions on the parameters, we have
\begin{eqnarray*}
&&\lambda^2 +\lambda(c_1(s)+c_2(s)+\mu(s))+\mu(s)c_2(s)\\
&\leq&(\lambda^*-\varepsilon)^2+(\lambda^*-\varepsilon)(l_1+c_2+l_\mu-2\eta)+c_2(l_\mu+\eta)\\
&=& \varepsilon^2-2\varepsilon \lambda^*-2\eta \lambda^*-\varepsilon(l_1+c_2+l_\mu-2\eta)+\eta c_2\\
&=& \varepsilon^2-\varepsilon[2\lambda^*+(l_1+c_2+l_\mu)]-2\lambda^* \eta+2\varepsilon \eta +\eta c_2\\
&\leq& \varepsilon^2-\varepsilon (l_1-2\eta)-2\lambda^* \eta+\eta c_2:=\tilde{C}(\eta)
\end{eqnarray*}
for every $s\geq \delta$, since
$$2\lambda^*+(l_1+c_2+l_\mu)=\sqrt{\Delta}\geq l_1.$$
Taking $\varepsilon$ small enough such that $\varepsilon\leq l_1/2$, lead to 
$$\tilde{C}(0)=\varepsilon(\varepsilon-l_1)<0.$$
By continuity of $\tilde{C}$, we can find $\eta^*$ small enough such that $\tilde{C}(\eta^*)<0$. Thus there exists $\delta>0$ such that
$$\lambda^2 +\lambda(c_1(s)+c_2(s)+\mu(s))+\mu(s)c_2(s)\leq \tilde{C}(\eta^*)<0, \quad \forall s\geq \delta.$$
An integration of \eqref{Eq:Sum_Pol} between $\delta$ and $\infty$ leads to
$$0\geq -(\lambda+c_2)\gamma_1(\delta)u_1(\delta)-c_2 \gamma_2(\delta)u_2(\delta)+\tilde{C}(\eta^*)\int_{\delta}^\infty u_1(s)ds\geq \int_{\delta}^\infty h(y)dy.$$
We choose $(h_1,h_2)\in \X_+$ such that $\int_{\delta}^\infty h(y)dy>0$ to get a contradiction. We obtain
$$\lambda^*-\varepsilon\in \sigma(\B)$$
for every $\varepsilon>0$ small enough, whence
$$s(\B)\geq \lambda^*$$
and the equality follows.
\end{proof}

\subsection{On the existence of the spectral gap}\label{Sec:SpectralGap}

This subsection deals with different cases where one can check directly the existence or not of a spectral gap.

\subsubsection{Sub (resp. super) conservative systems}

We start with:

\begin{theorem}\label{Thm:Conserv_Gap}
Suppose that 
$$\int_0^\infty \beta(s,y)ds\geq \mu(y), \quad \forall y\geq 0$$
and
$$\liminf_{x\to \infty}\mu(x)>0, \qquad \liminf_{x\to \infty}c_2(x)>0.$$
Then we have $s(\A)\geq 0$ and $s(\B)<0$. In particular $\{T_{\A}(t)\}_{t\geq 0}$ has a spectral gap, i.e. $\omega_{\ess}(\A)<\omega_0(\A)$.
\end{theorem}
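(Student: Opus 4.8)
The plan is to establish the two inequalities $s(\A)\geq 0$ and $s(\B)<0$ separately and then invoke Theorem \ref{Thm:AEG_inf}. The second inequality is already available: it is precisely the last assertion of Theorem \ref{Thm:Spec_Dtilde} under the hypotheses $\liminf_{x\to\infty}\mu(x)>0$ and $\liminf_{x\to\infty}c_2(x)>0$. Hence the only genuinely new point is the ``sub-conservative'' estimate $s(\A)\geq 0$; once it is proved, $s(\A)\geq 0>s(\B)$ gives \eqref{Cond:spectral_bound}, and Theorem \ref{Thm:AEG_inf} yields the spectral gap, with $\omega_{\ess}(\A)=\omega_{\ess}(\B)\leq s(\B)<0\leq s(\A)=\omega_0(\A)$.

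To get $s(\A)\geq 0$ I would argue by duality, testing against the constant vector $\mathbf 1:=(1,1)\in\X'=L^\infty(0,\infty)\times L^\infty(0,\infty)$. The first step is to check that $\mathbf 1\in D(\A')$ and to compute $\A'\mathbf 1$. For $u=(u_1,u_2)\in D(A)$ one has $u_i(0)=0$, and since $u_i\in W^{1,1}(0,\infty)$ forces $\gamma_i u_i\to 0$ at infinity ($\gamma_i$ being bounded), the transport terms integrate to zero, $\int_0^\infty(\gamma_i u_i)'\,ds=0$; applying Fubini to $\int_0^\infty\!\!\int_0^\infty\beta(s,y)u_1(y)\,dy\,ds$ (licit because $\sup_y\int_0^\infty\beta(s,y)\,ds<\infty$ by Remark \ref{Rem:Bounded}) and cancelling the $\pm c_1u_1$ and $\pm c_2u_2$ contributions gives
$$\langle\A u,\mathbf 1\rangle=\int_0^\infty u_1(y)\Big(\int_0^\infty\beta(s,y)\,ds-\mu(y)\Big)dy=\Big\langle u,\big(\textstyle\int_0^\infty\beta(s,\cdot)\,ds-\mu,\,0\big)\Big\rangle.$$
Thus $\mathbf 1\in D(\A')$ with $\A'\mathbf 1=(\int_0^\infty\beta(s,\cdot)\,ds-\mu,\,0)\in\X'$, and by the hypothesis $\int_0^\infty\beta(s,y)\,ds\geq\mu(y)$ one gets $\A'\mathbf 1\geq 0$. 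The second step converts this into the spectral-bound estimate: suppose, for contradiction, that $s(\A)<0$. Since $\{T_\A(t)\}_{t\geq0}$ is a positive $C_0$-semigroup on an $L^1$-space, $s(\A)=\omega_0(\A)$, so $0>\omega_0(\A)$ and $(-\A)^{-1}=(0\cdot I-\A)^{-1}=\int_0^\infty T_\A(t)\,dt$ is a positive operator. Pick $u\in\X_+\setminus\{0\}$ and set $v:=(-\A)^{-1}u\in D(\A)\cap\X_+$, so $\A v=-u$; then
$$0\leq\langle v,\A'\mathbf 1\rangle=\langle\A v,\mathbf 1\rangle=\langle -u,\mathbf 1\rangle=-\|u\|_\X<0,$$
a contradiction. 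Hence $s(\A)\geq 0$. (Equivalently, one may run the computation on $t\mapsto\langle T_\A(t)u,\mathbf 1\rangle=\|T_\A(t)u\|_\X$, whose $t$-derivative $\langle T_\A(t)u,\A'\mathbf 1\rangle$ is $\geq0$, forcing $\|T_\A(t)\|_{\L(\X)}\geq1$ for all $t$ and so $\omega_0(\A)\geq0$.)

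I do not expect a serious obstacle: the assumption $\int_0^\infty\beta(s,y)\,ds\geq\mu(y)$ is designed exactly so that the dual test vector $\mathbf 1$ satisfies $\A'\mathbf 1\geq0$, and $s(\B)<0$ is quoted from Theorem \ref{Thm:Spec_Dtilde}. The two points requiring a little care are the verification $\mathbf 1\in D(\A')$ (the Fubini step and the vanishing of the transport terms at infinity, which use $u\in W^{1,1}$ and $\gamma_i$ bounded) and the positivity of the resolvent at $\lambda=0$, which is legitimate precisely because $s(\A)=\omega_0(\A)$ for positive $C_0$-semigroups on $L^1$ spaces; one should also recall that Theorem \ref{Thm:AEG_inf} is applied under Assumption \ref{Assump2} (weak compactness of $K$), in force throughout this subsection.
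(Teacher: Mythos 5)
Your proposal is correct and rests on the same key computation as the paper: pairing $\A u$ with the constant functional $(1,1)$ yields $\int_0^\infty u_1(y)\bigl(\int_0^\infty\beta(s,y)\,ds-\mu(y)\bigr)dy\geq 0$, which is exactly the paper's integration of the system showing that the total mass is non-decreasing, while $s(\B)<0$ is quoted from Theorem \ref{Thm:Spec_Dtilde} just as in the paper. The only difference is in packaging: the paper converts this into $\|T_\A(t)\|_{\L(\X)}\geq 1$, hence $\omega_0(\A)=s(\A)\geq 0$, by differentiating $t\mapsto\|T_\A(t)u\|_\X$ on $D(\A)\cap\X_+$ and using density — precisely your parenthetical variant — whereas your main argument reaches the same conclusion through the contradiction with the positive resolvent $(-\A)^{-1}$; both are valid, and your observation that the spectral-gap conclusion goes through Theorem \ref{Thm:AEG_inf} under Assumption \ref{Assump2} matches the paper's implicit reliance on it.
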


\begin{proof}
The fact that $s(\B)<0$ is given by Theorem \ref{Thm:Spec_Dtilde}. To prove that $s(\A)\geq 0$, let the initial condition $(u^0_1,u^0_2)\in D(\A)\cap \X_+.$ An integration of \eqref{Eq:Model_inf} gives us
\begin{eqnarray*}
\dfrac{d}{dt}\left[\int_0^\infty u_1(s,t)ds\right]&=&-\int_0^\infty (\mu(s)+c_1(s))u_1(s,t)ds+\int_0^\infty c_2(s)u_2(s,t)ds\\
&&+\int_0^\infty \int_0^\infty \beta(s,y)u_1(y,t)dyds\\
\dfrac{d}{dt}\left[\int_0^\infty u_2(s,t)ds\right]&=&-\int_0^\infty c_2(s))u_2(s,t)ds+\int_0^\infty c_1(s)u_1(s,t)ds
\end{eqnarray*}
for every $t\geq 0$. The sum of the latter equations then lead to
\begin{eqnarray*}
&&\dfrac{d}{dt}\left[\int_0^\infty (u_1(s,t)+u_2(s,t))ds\right] \\
&=&\int_0^\infty \int_0^\infty \beta(s,y)u_1(y,t)dyds-\int_0^\infty \mu(s)u_1(s,t)ds\\
&=& \int_0^\infty \left[\int_0^\infty \beta(s,y)ds-\mu(y)\right]u_1(y,t)dy\geq 0
\end{eqnarray*}
by assumption. Consequently we get
$$\left\|T_{\A}(t)\begin{pmatrix}
u^0_1 \\
u^0_2
\end{pmatrix}\right\|\geq \left\|\begin{pmatrix}
u^0_1 \\
u^0_2
\end{pmatrix}\right\| \quad \forall t\geq 0.$$
By density of $D(\A)\cap \X_+$ in $\X_+$, the latter inequality holds for every $(u^0_1,u^0_2)\in \X_+$ and
$$\|T_{\A}(t)\|_{\L(\X)}\geq 1$$
for every $t\geq 0$. Consequently we have
$$\omega_0(\A)\geq 0$$
and
$$s(\A)\geq 0.$$
\end{proof}
We give now a `converse' result
\begin{theorem}\label{Thm:NoSpecGap}
Suppose that 
\begin{equation}\label{Eq:Hyp_C2-mu}
\lim_{x\to \infty}c_2(x)=0 \text{ or } \lim_{x\to \infty}\mu(x)=0
\end{equation}
and that
\begin{equation}\label{Eq:HypBeta_Inf}
\int_0^\infty \beta(s,y)ds\leq \mu(y), \quad \forall y\geq 0.
\end{equation}
Then $s(\B)=0$ and $s(\A)=0$. In particular $\{T_{\A}(t)\}_{t\geq 0}$ has not a spectral gap, i.e. $\omega_{\ess}(\A)=\omega_0(\A)$.
\end{theorem}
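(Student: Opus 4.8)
The idea is to establish the two spectral-bound equalities $s(\B)=0$ and $s(\A)=0$ independently, and then deduce from Remark~\ref{Rem:ess-type} that the equality $s(\A)=s(\B)$ forbids a spectral gap.

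\textbf{Step 1: $s(\B)=0$.} Theorem~\ref{Thm:Spec_Dtilde} already gives $-\limsup_{x\to\infty}\mu(x)\le s(\B)\le 0$, so if $\lim_{x\to\infty}\mu(x)=0$ we are done immediately. If instead $\lim_{x\to\infty}c_2(x)=0$, I would use the positivity of $B_2$ and the block-diagonal structure of $A+B_1$ to bound $s(\B)=s((A+B_1)+B_2)\ge s(A+B_1)=\max\{s(A^1_{\mu+c_1}),s(A^2_{c_2})\}\ge s(A^2_{c_2})$, and then invoke Remark~\ref{Rem:Estimates} (the analogue of Theorem~\ref{Prop:Spec_Bmu} for $A^2_{c_2}$) to get $s(A^2_{c_2})\ge-\limsup_{x\to\infty}c_2(x)=0$; combined with the upper bound $s(\B)\le 0$ this again yields $s(\B)=0$. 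Thus in either alternative of \eqref{Eq:Hyp_C2-mu} one gets $s(\B)=0$.

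\textbf{Step 2: $s(\A)=0$.} Since $B_3\ge 0$ is bounded, $s(\A)=s(\B+B_3)\ge s(\B)=0$. For the reverse inequality I would mimic the computation of Theorem~\ref{Thm:Conserv_Gap} with the reversed sign: for $(u^0_1,u^0_2)\in D(\A)\cap\X_+$ the orbit $(u_1(t),u_2(t))=T_{\A}(t)(u^0_1,u^0_2)$ is nonnegative and solves \eqref{Eq:Model_inf} classically; integrating over $(0,\infty)$, the Dirichlet condition together with decay at infinity make the transport terms drop out, and summing the two equations and applying Fubini yields
$$\frac{d}{dt}\int_0^\infty\big(u_1(s,t)+u_2(s,t)\big)\,ds=\int_0^\infty\Big(\int_0^\infty\beta(s,y)\,ds-\mu(y)\Big)u_1(y,t)\,dy\le 0$$
by \eqref{Eq:HypBeta_Inf}. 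Hence $t\mapsto\|T_{\A}(t)(u^0_1,u^0_2)\|_{\X}$ is non-increasing; by density of $D(\A)\cap\X_+$ in $\X_+$ and positivity of $\{T_{\A}(t)\}_{t\ge 0}$ this gives $\|T_{\A}(t)\|_{\L(\X)}\le 1$ for all $t\ge 0$, so $\omega_0(\A)\le 0$; since $\{T_{\A}(t)\}_{t\ge 0}$ is a positive semigroup on an $L^1$-space, $s(\A)=\omega_0(\A)\le 0$. Therefore $s(\A)=0=s(\B)$.

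\textbf{Step 3 and main obstacle.} To conclude, by Remark~\ref{Rem:ess-type} one has $\omega_{\ess}(\B)=\omega_0(\B)=s(\B)$, and the weak compactness of $B_3$ (Assumption~\ref{Assump2}) gives $\omega_{\ess}(\A)=\omega_{\ess}(\B)$; hence $\omega_{\ess}(\A)=s(\B)=s(\A)=\omega_0(\A)$, i.e. there is no spectral gap (see also Remark~\ref{Rem:Spec_gap-necessary}). No step is deep; the only places requiring attention are the case split in Step~1 --- the sub-case $\lim c_2=0$ is not covered by Theorem~\ref{Thm:Spec_Dtilde} and genuinely needs the lower bound through $A^2_{c_2}$ --- and the justification of the formal integration in Step~2 (vanishing of the boundary terms at $0$ and $\infty$, use of Fubini, and the extension from $D(\A)\cap\X_+$ to all of $\X$ by density and positivity).
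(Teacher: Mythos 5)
Your proposal is correct and follows essentially the same route as the paper: the case split for $s(\B)=0$ via Theorem~\ref{Thm:Spec_Dtilde}, Remark~\ref{Rem:Estimates} and the bound \eqref{Eq:Bound_B}, the integration of \eqref{Eq:Model_inf} plus density to get $\omega_0(\A)\le 0$, and the conclusion via $s(\A)\ge s(\B)$ and Remark~\ref{Rem:ess-type}. Your explicit mention that the no-gap conclusion uses the weak compactness of $B_3$ (through Remark~\ref{Rem:ess-type}) is a point the paper leaves implicit, but it is the same argument.
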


\begin{proof}
If $\lim_{x\to \infty}\mu(x)=0$, then it is clear that 
$$s(\B)=0$$
by Theorem \ref{Thm:Spec_Dtilde}. If $\lim_{x\to \infty}c_2(x)=0$, then, using Remark \ref{Rem:Estimates}, we see that $s(A^2_{c_2})=0$. The fact that $s(\B)=0$ follows from Theorem \ref{Thm:Spec_Dtilde} and \eqref{Eq:Bound_B}. Let the initial condition $(u^0_1,u^0_2)\in D(\A)\cap \X_+$. An integration of \eqref{Eq:Model_inf} gives us
\begin{eqnarray*}
\dfrac{d}{dt}\left[\int_0^\infty (u_1(s,t)+u_2(s,t))ds\right]= \int_0^\infty \left[\int_0^\infty \beta(s,y)ds-\mu(y)\right]u_1(y,t)dy\leq 0.
\end{eqnarray*}
By density, we then get
$$\left\|T_{\A}(t)\begin{pmatrix}
u^0_1 \\
u^0_2
\end{pmatrix}\right\|\leq \left\|\begin{pmatrix}
u^0_1 \\
u^0_2
\end{pmatrix}\right\| \quad \forall t\geq 0,$$
for every $(u^0_1, u^0_2)\in \X_+$. Consequently, we have
$$\|T_{\A}(t)\|_{\L(\X)}\leq 1$$
for every $t\geq 0$ so $\omega_0(\A)\leq 0$ and
$$s(\A)\leq 0.$$
Since $\A$ is a positive and bounded perturbation of $\B$, we have
$$s(\A)\geq s(\B).$$
It then follows from Remark \ref{Rem:ess-type}, that
$$\omega_{\ess}(\A)=\omega_0(\A)$$
which ends the proof.
\end{proof}

We note that in contrast to the case $m<\infty$, the irreducibility of the semigroup does not imply the existence of spectral gap since \eqref{Eq:Hyp_C2-mu} and \eqref{Eq:HypBeta_Inf} are compatible with the irreducibility of the semigroup.

\subsubsection{A particular case}

We show now that the spectral gap is always present when some parameters are constant.

\begin{theorem}\label{Thm:Partic_Case}
Let $c_1, c_2$ and $\mu$ be positive constants. If $\beta_1(s):=\inf_{y\geq 0} \beta(s,y)$ is not identically zero then
$$s(\A)>s(\B).$$
In particular $\{T_{\A}(t)\}_{t\geq 0}$ has a spectral gap.
\end{theorem}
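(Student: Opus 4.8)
The plan is to combine Voigt's resolvent formula (Lemma~\ref{Lemma:Voigt}) with an explicit lower bound on $r_\sigma\bigl(B_3(\lambda-\B)^{-1}\bigr)$. As observed right after Theorem~\ref{Thm:AEG_inf}, condition \eqref{Cond:spectral_bound}, i.e. $s(\A)>s(\B)$, holds if and only if $\lim_{\lambda\to s(\B)^+}r_\sigma\bigl(B_3(\lambda-\B)^{-1}\bigr)>1$, so it suffices to prove that this limit equals $+\infty$. The idea is to test the resolvent against the vector $(\beta_1,0)$ and to minorize $B_3$ by a rank-one operator so that the spectral radius can be computed, not merely estimated.

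First I would set up the minorization. Since $\beta(s,y)\ge\beta_1(s)$ for all $y\ge0$ and $\beta_1\le\beta(\cdot,y_0)\in L^1(0,\infty)$ (so $\beta_1\in L^1_+(0,\infty)$, nontrivial by hypothesis), for every $u_1\in L^1_+(0,\infty)$ one has $(Ku_1)(s)\ge\beta_1(s)\,\|u_1\|_{L^1}$; hence $B_3\ge\widetilde B_3$, where $\widetilde B_3(u_1,u_2)=\bigl(\|u_1\|_{L^1}\beta_1,\,0\bigr)$ is rank one. Since $\B$ is resolvent positive, $B_3(\lambda-\B)^{-1}\ge\widetilde B_3(\lambda-\B)^{-1}\ge0$ for $\lambda>s(\B)$, and therefore $r_\sigma\bigl(B_3(\lambda-\B)^{-1}\bigr)\ge r_\sigma\bigl(\widetilde B_3(\lambda-\B)^{-1}\bigr)$. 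Writing $(u_1^\lambda,u_2^\lambda):=(\lambda-\B)^{-1}(\beta_1,0)\in D(\B)$, a direct computation gives $\widetilde B_3(\lambda-\B)^{-1}(\beta_1,0)=\|u_1^\lambda\|_{L^1}(\beta_1,0)$, so $(\beta_1,0)$ is an eigenvector of the rank-one operator $\widetilde B_3(\lambda-\B)^{-1}$ and
\[
r_\sigma\bigl(B_3(\lambda-\B)^{-1}\bigr)\ \ge\ \|u_1^\lambda\|_{L^1(0,\infty)}.
\]

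Next I would compute $\|u_1^\lambda\|_{L^1}$ explicitly. For $\lambda>s(\B)$ the pair $(u_1^\lambda,u_2^\lambda)$ solves \eqref{Eq:Sys_SpecDtilde} with $(h_1,h_2)=(\beta_1,0)$ and $u_i^\lambda(0)=0$; since $\gamma_iu_i^\lambda\in W^{1,1}(0,\infty)$ vanishes at $0$ and at $\infty$, integrating the two equations over $(0,\infty)$ (using that $c_1,c_2,\mu$ are positive constants, and $\lambda+c_2>0$ because $P(-c_2)=-c_1c_2<0$ forces $s(\B)>-c_2$) yields
\[
(\lambda+c_2)\|u_2^\lambda\|_{L^1}=c_1\|u_1^\lambda\|_{L^1},\qquad (\lambda+\mu)\|u_1^\lambda\|_{L^1}+\lambda\|u_2^\lambda\|_{L^1}=\|\beta_1\|_{L^1}.
\]
Eliminating $\|u_2^\lambda\|_{L^1}$ gives $\|u_1^\lambda\|_{L^1}=\dfrac{(\lambda+c_2)\,\|\beta_1\|_{L^1}}{P(\lambda)}$, where $P(\lambda)=\lambda^2+\lambda(c_1+c_2+\mu)+\mu c_2$ is precisely the polynomial of Theorem~\ref{Thm:SpecBound_Pol}; thus $P(s(\B))=0$ and $P>0$ on $(s(\B),\infty)$. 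Letting $\lambda\downarrow s(\B)$, the numerator tends to $(s(\B)+c_2)\|\beta_1\|_{L^1}>0$ while $P(\lambda)\downarrow0^+$, so $\|u_1^\lambda\|_{L^1}\to+\infty$ and hence $r_\sigma\bigl(B_3(\lambda-\B)^{-1}\bigr)\to+\infty>1$. By Lemma~\ref{Lemma:Voigt} this gives $s(\A)>s(\B)$, and the spectral gap follows from Theorem~\ref{Thm:AEG_inf}. Solving $\|u_1^\lambda\|_{L^1}=1$, i.e. $P(\lambda)=(\lambda+c_2)\|\beta_1\|_{L^1}$, moreover furnishes the explicit lower bound of Remark~\ref{Rem:Bound_SpecGap}.

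The boundary-term bookkeeping at infinity and the integrability $\beta_1\in L^1(0,\infty)$ are routine. The genuinely delicate steps — and the ones I expect to be the main obstacle — are choosing the right test vector $(\beta_1,0)$ together with the rank-one minorization of $B_3$, which is what turns the spectral-radius estimate into the exact scalar $\|u_1^\lambda\|_{L^1}$, and then recognizing that this scalar is governed by the spectral polynomial $P$ of $\B$, so that the blow-up as $\lambda\downarrow s(\B)$ is visibly controlled from below.
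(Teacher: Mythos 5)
Your proposal is correct, and it takes a genuinely different route from the paper's proof, although both rest on the same underlying algebra. The paper argues by contradiction on the resolvent of $\A$ itself: assuming $\lambda=\lambda^*+\varepsilon>s(\A)$, it takes an arbitrary positive datum, forms the combination $(\lambda+c_2)\times(\text{first equation})+c_2\times(\text{second equation})$, integrates over $(0,\infty)$, minorizes $\int_0^\infty\!\!\int_0^\infty\beta(s,y)u_1(y)\,dy\,ds$ by $\int_0^\infty\beta_1\cdot\int_0^\infty u_1$, and obtains the sign contradiction $0\geq f(\varepsilon)\int u_1\geq\int h>0$ for $\varepsilon\in[0,\overline{\varepsilon}]$, where $f(\varepsilon)$ is exactly your $P(\lambda^*+\varepsilon)-(\lambda^*+\varepsilon+c_2)\int\beta_1$. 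You instead never touch $(\lambda-\A)^{-1}$: you work with $\B$ and $B_3$, dominate $B_3$ from below by the rank-one operator $u\mapsto\bigl(\beta_1\int_0^\infty u_1,0\bigr)$ (note it should be written in this linear form; the formula with $\|u_1\|_{L^1}$ is only its restriction to the positive cone), compute exactly $\|u_1^\lambda\|_{L^1}=(\lambda+c_2)\|\beta_1\|_{L^1}/P(\lambda)$ by integrating the two resolvent equations of $\B$ (the boundary terms vanish since $\gamma_iu_i^\lambda\in W^{1,1}(0,\infty)$), and conclude through Voigt's formula \eqref{Eq:Voigt}: since $r_\sigma\bigl(B_3(\lambda-\B)^{-1}\bigr)\geq r_\sigma\bigl(\widetilde B_3(\lambda-\B)^{-1}\bigr)=\|u_1^\lambda\|_{L^1}>1$ on the whole interval $(s(\B),s(\B)+\overline{\varepsilon})$ (domination of spectral radii of positive operators on a Banach lattice, which you should cite or state), the infimum in \eqref{Eq:Voigt} is at least $s(\B)+\overline{\varepsilon}$ — this phrasing is cleaner than passing through the limit statement after Theorem \ref{Thm:AEG_inf}, of which only the sufficiency direction is needed anyway. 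What your route buys: a direct, contradiction-free lower bound $s(\A)\geq s(\B)+\overline{\varepsilon}$ that recovers Remark \ref{Rem:Bound_SpecGap} immediately, an exact eigenvalue rather than an inequality at the key step, and it implements the strategy the authors themselves hint at after Theorem \ref{Thm:AEG_inf}. What the paper's route buys: it is more elementary, needing neither the rank-one minorization nor the comparison of spectral radii, only positivity of $(\lambda-\A)^{-1}$. Both proofs rely on Theorem \ref{Thm:SpecBound_Pol} for $s(\B)=\lambda^*$ and on $\lambda^*>-c_2$, and, like the paper, you use $\beta_1\in L^1_+(0,\infty)$ with $\int_0^\infty\beta_1>0$, which follows from the boundedness of $K$ as you indicate.
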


\begin{proof}
The computation of $s(\B)$ follows from Theorem \ref{Thm:SpecBound_Pol}:
$$s(\B)=\dfrac{-(c_1+c_2+\mu)+\sqrt{(c_1+c_2+\mu)^2-4\mu c_2}}{2}=:\lambda^*$$
Let
$$\lambda:=\lambda^*+\varepsilon \quad (\varepsilon>0).$$
If $\lambda>s(\A)$ then $\lambda\in \rho(\A)$ and 
$(\lambda-\A)^{-1}$ is positive. So for any $(h_1,h_2)\in \X_+-\left\{0\right\}$,
$$(u_1,u_2):=(\lambda-\A)^{-1}(h_1,h_2)$$
is nonnegative and satisfies
\begin{equation*}
\left\{
\begin{array}{rcl}
(\gamma_1 u_1)'+(\lambda+c_1+\mu)u_1-c_2 u_2-\int_0^\infty \beta(\cdot,y)u_1(y)dy&=&h_1, \\
(\gamma_2 u_2)'+(\lambda+c_2)u_2-c_1u_1&=&h_2.
\end{array}
\right.
\end{equation*}
We multiply the first equation by $\lambda+c_2$ and the second one by $c_2$, then the sum implies that
\begin{eqnarray*}
&&(\lambda+c_2)(\gamma_1 u_1)'+c_2(\gamma_2 u_2)'+[\lambda^2+\lambda(c_1+c_2+\mu)+\mu c_2]u_1 \\
&=&(\lambda+c_2)\int_0^\infty \beta(\cdot,y)u_1(y)dy+h,
\end{eqnarray*}
where $h:=(\lambda+c_2)h_1+c_2h_2$. An integration of the latter equation leads to
\begin{eqnarray*}
&&[\lambda^2+\lambda(c_1+c_2+\mu)+\mu c_2]\int_0^\infty u_1(y)dy \\
&=&\int_0^\infty h(y)dy+(\lambda+c_2)\int_0^\infty \int_0^\infty \beta(s,y)u_1(y)dyds
\end{eqnarray*}
and replacing $\lambda$ by its expression, we obtain
\begin{eqnarray*}
&&[\varepsilon^2+\varepsilon(2\lambda^*+c_1+c_2+\mu)]\int_0^\infty u_1(y)dy\\
&=&\int_0^\infty h(y)dy+(\lambda^*+\varepsilon+c_2)\int_0^\infty \int_0^\infty \beta(s,y)u_1(y)dyds.
\end{eqnarray*}
Consequently, we have
\begin{equation}\label{Eq:Const_Sum}
f(\varepsilon)\int_0^\infty u_1(y)dy\geq \int_0^\infty h(y)dy,
\end{equation}
where we defined
$$f:\varepsilon \mapsto [\varepsilon^2+\varepsilon(2\lambda^*+c_1+c_2+\mu)]-(\lambda^*+\varepsilon+c_2)\int_0^\infty \beta_1(s)ds.$$
Since $\lambda^*>-c_2$, then
$$f(0)=-(\lambda^*+c_2)\int_0^\infty \beta_1(s)ds<0.$$
The fact that $\lim_{\varepsilon \to \infty}f(\varepsilon)=\infty$ implies, by continuity, that there exists $\overline{\varepsilon}>0$ such that $f(\overline{\varepsilon})=0$. Considering $\varepsilon\in[0,\overline{\varepsilon}]$ in \eqref{Eq:Const_Sum} would lead to
$$0\geq f(\varepsilon)\int_0^\infty u_1(y)dy\geq \int_0^\infty h(y)dy>0$$
which is a contradiction. Hence $\lambda^*+\overline{\varepsilon}\leq s(\A)$ and this ends the proof.
\end{proof}

\begin{remark}\label{Rem:Bound_SpecGap}
A simple computation shows that
$$\overline{\varepsilon}=\dfrac{-(2\lambda^*+c_1+c_2+\mu-\int_0^\infty \beta_1(s)ds)+\sqrt{\Delta}}{2}>0$$
where
$$\Delta:=\left(2\lambda^*+c_1+c_2+\mu-\int_0^\infty \beta_1(s)ds\right)^2+4(\lambda^*+c_2)\int_0^\infty \beta_1(s)ds>0$$
which provides us with an explicit lower bound of the spectral gap 
$$s(\A)-s(\B)\geq \overline{\varepsilon}.$$
\end{remark}


\medskip
Received xxxx 20xx; revised xxxx 20xx.
\medskip

\end{document}